\documentclass[12pt]{amsart}
\usepackage{amssymb}
\usepackage{fullpage}
\usepackage[usenames,dvipsnames]{color}


\theoremstyle{plain}
\newtheorem{thm}{Theorem}[section]
\newtheorem{prop}[thm]{Proposition}
\newtheorem{cor}[thm]{Corollary}
\newtheorem{lem}[thm]{Lemma}

\theoremstyle{definition}
\newtheorem{defn}[thm]{Definition}
\newtheorem{rem}[thm]{Remark}

\newcommand{\red}{\color{red}}


\newcommand{\bC}{\mathbb{C}}

\newcommand{\bZ}{\mathbb{Z}}

\newcommand{\eps}{\varepsilon}

\begin{document}


\title{$C^*$ exponential length of commutators unitaries in $AH$ algebras}

\author[Li, C. G.]{Chun Guang Li}
\address{School of Mathematics and Statistics,
Northeast Normal University, Changchun 130024, P. R. China}
\email{licg864@nenu.edu.cn}

\author[Li, L..]{Liangqing Li}
\address{Department of Mathematics, University of Puerto Rico, Rio Piedras, PR 00931, USA}
\email{li.liangqing@upr.edu}

\author[Ruiz, I. V.]{Iv$\acute{a}$n Vel$\acute{a}$zquez Ruiz}
\address{Department of Mathematics, University of Puerto Rico, Rio Piedras, PR 00931, USA}
\email{ivan.velazquez@upr.edu}

\subjclass[2000]{Primary 46L05; Secondary 46L80}

\keywords{exponential length, $AH$ algebras, Jiang-Su algebra}

\date{\today}

\begin{abstract}
For each unital $C^*$-algebra $A$, we denote $cel_{CU}(A)=\sup\{cel(u):u\in CU(A)\}$,
where $cel(u)$ is the exponential length of $u$ and $CU(A)$ is the closure of the commutator subgroup of $U_0(A)$. In this paper,  we prove that $cel_{CU}(A)=2\pi$
provided that $A$ is an $AH$ algebras with  slow dimension growth whose real rank is not zero.
On the other hand, we prove that $cel_{CU}(A)\leq 2\pi$ when $A$ is an $AH$
algebra with ideal property and of no dimension growth (if we further assume $A$ is  not  of real rank zero, we have $cel_{CU}(A)= 2\pi$).
\end{abstract}

\maketitle

\section{Introduction}
Let $A$ be a unital $C^*$-algebra and $U(A)$ be the unitary group of $A$.
We denote by $U_0(A)$ the component of $U(A)$ containing the identity.
A unitary element $u\in U(A)$ belongs to $U_0(A)$ if and only if $u$ has
the following form
$$
u=\Pi_{j=1}^n exp(ih_j),
$$
where $n$ is a positive integer and $h_j$ is self-adjoint for every $1\leq j\leq n$.
For $u\in U_0(A)$, the exponential rank of $u$ was defined by Phillips and Ringrose
\cite{Philliringrose}, and the exponential length of $u$ was defined by Ringrose
\cite{Ringrose}. We introduce the definition of $C^*$ exponential length as follows.


\begin{defn}
For $u\in U_0(A)$, the $C^*$ exponential length of $u$, denoted by $cel(u)$,
is defined as
$$
cel(u)=\inf\{\sum_{j=1}^k\|h_j\|:u=\Pi_{j=1}^kexp(ih_j),h_j=h_j^*\}.
$$	
Define
$$
cel(A)=\sup\{cel(u):u\in U_0(A)\}.
$$
\end{defn}

From \cite{Ringrose}, $cel(u)$ is exactly the infimum of the length of rectifiable
paths from $u$ to ${\bf 1}_A$ in $U(A)$. Equivalently $cel(u)$ is also the infimum
of the length of smooth path from $u$ to ${\bf 1_A}$.

Exponential rank and length have been studied extensively, (see \cite{Lin2010,Phillips2,Phillips4,Phillips5,Ringrose,Zhang,Zhang1} ) and have played important roles in the classification of $C^*$-algebras (see \cite{ EGL, Gong02, homotopy,Lin2011,lin2012}).

Phillips \cite{Phillips2} proved that the exponential rank of a
unital purely infinite simple $C^*$-algebra is $1+\eps$ and
its exponential length is $\pi$. Lin \cite{Lin1} proved that
for any unital $C^*$-algebra $A$ of real rank zero, $u\in U_0(A)$
and $\eps>0$, there exists a self-adjoint element $h\in A$
with $\|h\|=\pi$ such that
$$
\|u-exp(ih)\|<\eps.
$$
This means that $cel(u)\leq \pi$. But Phillips \cite{Phillips5}
showed that when $A$ does not have real rank zero, even for some simplest example $A=M_n(C([0,1]))$, $cel(A)$ can be $\infty$.

\begin{defn}
We denote by $CU(A)$ the closure of the commutator
subgroup of $U_0(A)$ and we define
$$
cel_{CU}(A)=\sup\{cel(u):u\in CU(A)\}.
$$	
\end{defn}

In the study of classification of simple amenable
$C^*$-algebras, one often has to calculate the
exponential length for unitaties in $CU(A)$.
Pan and Wang \cite{PanWang}  constructed a simple $AI$ algebra (inductive limit of $M_n(C([0,1]))$ ) $A$ such that
$cel_{{CU}}(A)\geq 2\pi$. Applying Lin's Lemma 4.5 in \cite{lin},
the $cel_{CU}(A)$ is exactly $2\pi$.

\begin{defn}
An $AH$ algebra $A$ is the $C^*$-algebra inductive limit of
a sequence $A=\lim(A_n, \phi_{n,n+1})$ with
$A_n=\oplus_{j=1}^{t_n}P_{n,j}M_{[n,j]}(C(X_{n,j}))P_{n,j}$,
where $[n,j]$ and $t_n$ are positive integers, $X_{n,j}$ are
compact metrizable spaces and $P_{n,j}\in M_{[n,j]}(C(X_{n,j}))$
are projections.
\end{defn}

By \cite{ElliottGongLi}, one can always replace the compact metrizable space $X_{n,j}$
by finite simplicial complexes, and at the same time, replace $\phi_{n,n+1} $ by injective  homomorphism (see \cite{Bi1} also).\\

{\bf In this paper,  for all $AH$ inductive limits, we will always assume that $X_{n,i}$ are connected finite simplicial complexes and all connecting maps $\phi_{n, n+1}$ are injective.}  \\

In \cite{lin}, Lin has obtained the following two main theorems (we rephrase the theorems in the language of $AH$ algebras):

$\\$ {\bf Theorem A}~(\cite{lin}, Theorem 4.6)~Suppose that $A$ is a ${\mathcal Z}$-stable simple $C^*$-algebra such that $A\otimes UHF$ is an $AH$ algebra of slow dimension growth (this class includes  all simple $AH$ algebras of no dimension growth and the Jiang-Su algebra ${\mathcal Z}$). Then $cel_{CU}(A)\leq 2\pi$.

$\\$ {\bf Theorem B}~(\cite{lin}, Theorem 5.11 and Corollary 5.12)~For any unital non elementary (i.e., not isomorphic to $M_n(\mathbb{C})$) simple $AH$ algebra $B$ of slow dimension growth, there exists a unital simple $AH$ algebra $A$ of no dimension growth
such that $(K_0(A),K_0(A)_+,K_1(A))\cong (K_0(B),K_0(B)_+,K_1(B))$ and $cel_{CU}(A)>\pi$.

\vspace{0.2in}


It is proved in \cite{Winter} that for  non elementary simple $AH$ algebras, the classes of no dimension growth and slow dimension growth are the same (see \cite{EGL,Gong02,Lin2011}  also).

Our main theorem in this article is that for all (not necessary simple) $AH$ algebras $A$ with slow dimension growth, if $A$ is not of real rank zero, then $cel_{CU}(A)\geq 2\pi$. This theorem greatly generalizes and strengthens Lin's Theorem B above. If we further assume $A$ is simple, combining with Lin's Theorem A above, then  $cel_{CU}(A)=2\pi$. This gives the complete calculation of $cel_{CU}(A)$ for simple $AH$ algebras $A$ of slow dimension growth (note that for real rank zero case, it is already known by  \cite{Lin1} that $cel(A)=\pi$). We will extend such calculation of
$cel_{CU}(A)$ of simple $AH$ algebra $A$ to $AH$ algebras of no dimension growth with ideal property. We will also prove that $cel_{CU}({M_n(\mathcal Z}))\geq2\pi$ for the Jiang-Su algebra ${\mathcal Z}$. Combine with Lin's Theorem A, we have  $cel_{CU}(M_n({\mathcal Z}))=2\pi$.

In section 2, we will introduce some notations and some known results for preparation. In section 3, we will prove our main theorem. In section 4, we will deal with $AH$ algebras with ideal property. In section 5, we will calculate $cel_{CU}(M_n({\mathcal Z}))$.


\section{Notations and some known results}

\begin{prop}[\cite{PanWang}, Lemma 2.5]\label{L:conti}
Let $u\in C([0,1])$ be defined by $u(t)=exp(i\alpha(t))$.
Then
 $$
 cel(u)=\min_{k\in \mathbb{Z}}\max_{t\in[0,1]}|\alpha(t)-2k\pi|.
 $$	
 \end{prop}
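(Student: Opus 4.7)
The plan is to prove the claimed equality by matching an upper bound with a lower bound.

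For the upper bound, I would exploit the freedom in choosing a lift of $u$. For each integer $k$, the function $h_k := \alpha - 2k\pi$ is a self-adjoint element of $C([0,1])$ with $\exp(ih_k) = u$, since $\exp(-2\pi i k) = 1$. Thus by definition $cel(u) \le \|h_k\| = \max_{t \in [0,1]} |\alpha(t) - 2k\pi|$, and taking the infimum over $k \in \mathbb{Z}$ yields
$$ cel(u) \le \min_{k \in \mathbb{Z}} \max_{t \in [0,1]} |\alpha(t) - 2k\pi|. $$

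For the lower bound, the key observation is that $C([0,1])$ is commutative. Given any representation $u = \prod_{j=1}^n \exp(ih_j)$ with each $h_j$ self-adjoint, evaluating pointwise collapses the product to a single exponential: setting $H := \sum_{j=1}^n h_j \in C_{\mathbb{R}}([0,1])$, one has $u(t) = \exp(iH(t))$ for all $t$. Consequently $H(t) - \alpha(t) \in 2\pi\mathbb{Z}$ for every $t$. Since $H - \alpha$ is continuous and $[0,1]$ is connected, there is a fixed integer $k$ such that $H(t) = \alpha(t) - 2k\pi$ for every $t$. The triangle inequality then gives
$$ \max_{t \in [0,1]} |\alpha(t) - 2k\pi| \;=\; \|H\| \;\le\; \sum_{j=1}^n \|h_j\|. $$
Since the left-hand side bounds $\min_{k' \in \mathbb{Z}} \max_t |\alpha(t) - 2k'\pi|$ from above, every decomposition satisfies this inequality, and taking the infimum over decompositions produces the matching lower bound.

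There is no substantive obstacle here: the entire argument rests on the abelianness of $C([0,1])$, which turns a finite product of exponentials into a single exponential at the level of pointwise evaluation, together with the connectedness of $[0,1]$, which pins down a single integer $k$ governing the winding of the lift. One should only remain alert that the lift $\alpha$ is determined by $u$ up to an additive constant in $2\pi\mathbb{Z}$, but the quantity $\min_k \max_t|\alpha(t) - 2k\pi|$ is manifestly invariant under $\alpha \mapsto \alpha + 2\pi m$, so this causes no issue.
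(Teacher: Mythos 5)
Your argument is correct, and since the paper merely cites this result from Pan--Wang (Lemma 2.5) without reproducing a proof, there is nothing in the present text to compare against directly. Both halves are sound: the upper bound is immediate from the single-exponential decomposition $u=\exp(i(\alpha-2k\pi))$, and the lower bound correctly exploits commutativity of $C([0,1])$ to collapse any finite product $\prod_j\exp(ih_j)$ into $\exp(iH)$ with $H=\sum_j h_j$, after which connectedness of $[0,1]$ forces $H-\alpha$ to be a single constant in $2\pi\mathbb{Z}$ and the triangle inequality $\|H\|\le\sum_j\|h_j\|$ finishes. This is the standard elementary proof, consistent with the definition of $cel$ adopted in the paper (infimum over finite products of exponentials) and equally compatible with the Ringrose path-length characterization. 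One small remark worth making explicit for a careful write-up: the argument shows the infimum defining $cel(u)$ is actually attained here, by a single exponential with the optimal winding integer $k$.
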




%
%
%
%
%
%
%

\begin{prop}[\cite{PanWang}, Cor. 3.5]\label{L:pertur}
Let $H_s$ be a rectifiable path in $U(M_k(C([0,1])))$. For
each $\eps>0$, there exists a piecewise smooth path $F_s$ in $U(M_k(C([0,1])))$ such that
\begin{enumerate}
	\item[(1)] $\|H_s-F_s\|_\infty<\eps$,  for all $s\in[0,1]$;
	\item[(2)] $|length_s(H_s)-length_s(F_s)|\leq \eps$;
	\item[(3)] $F_s(t)$ has no repeated eigenvalues
	for any $(s,t)\in[0,1]\times[0,1]$.
\end{enumerate}	
Moreover, if for each $t\in[0,1]$ $H_1(t)$ has no repeated
eigenvalues, then $F$ can be chosen to be such that $F_1(t)=H_1(t)$ for all $t\in[0,1]$.
\end{prop}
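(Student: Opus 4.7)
The plan is first to replace $H_s$ by a piecewise smooth approximation $G_s$ of nearly the same length. Since $s\mapsto H_s$ is rectifiable in $U(M_k(C([0,1])))$, I would choose a sufficiently fine partition $0=s_0<s_1<\cdots<s_N=1$ so that the polygonal approximation has length within $\eps/4$ of $length_s(H_s)$ and each $\|H_{s_{i-1}}-H_{s_i}\|_\infty$ is small enough for the principal logarithm $h_i=-i\log(H_{s_{i-1}}^*H_{s_i})$ to be well defined. On each subinterval I replace $H$ by the smooth exponential arc $G_s=H_{s_{i-1}}\exp\bigl(i\tfrac{s-s_{i-1}}{s_i-s_{i-1}}h_i\bigr)$. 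The resulting piecewise smooth $G$ satisfies $\|H_s-G_s\|_\infty<\eps/2$ and $|length_s(H_s)-length_s(G_s)|<\eps/2$. If $H_1(t)$ already has no repeated eigenvalues, I arrange the last segment so that $G_1=H_1$ exactly.

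\textbf{Stage 2 (transversality perturbation).} Next I perturb $G_s$ to eliminate repeated eigenvalues by a dimension count. View $G$ as a map $\Phi\colon[0,1]^2\to U(k)$ by $\Phi(s,t)=G_s(t)$, and let $\Sigma\subset U(k)$ denote the set of unitaries with at least one repeated eigenvalue. By the classical von Neumann--Wigner count, forcing a pair of eigenvalues to coincide imposes three real conditions on the corresponding $2\times 2$ block in the spectral decomposition, so $\Sigma$ is a real algebraic subset of codimension $3$ in the $k^2$-dimensional manifold $U(k)$. After smoothing $\Phi$ slightly in the $t$ variable (at a negligible cost to sup-norm and length), Thom transversality applied stratum-by-stratum to $\Sigma$, together with a partition-of-unity argument, produces an arbitrarily $C^1$-small perturbation $F$ of $G$ whose image in $U(k)$ avoids $\Sigma$ pointwise, since $\dim[0,1]^2=2<3$. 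Choosing the perturbation small enough preserves $\|F_s-G_s\|_\infty<\eps/2$ and $|length_s(G_s)-length_s(F_s)|<\eps/2$, which together with Stage 1 yields (1), (2), and (3).

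\textbf{Endpoint clause and main obstacle.} For the moreover clause, if $H_1(t)$ has no repeated eigenvalues for any $t\in[0,1]$, then $H_1([0,1])$ is a compact subset of $U(k)\setminus\Sigma$ at positive distance $\delta>0$ from $\Sigma$. Multiplying the perturbation by a smooth cutoff $\chi(s)$ that vanishes in a neighborhood of $s=1$ preserves $F_1=G_1=H_1$, and continuity keeps $F_s(t)\notin\Sigma$ for $s$ near $1$. The main obstacle I anticipate is making the transversality step fully rigorous: $\Sigma$ is a stratified variety whose strata correspond to eigenvalue multiplicity partitions, and one must verify that every stratum has codimension at least $3$ (with equality on the stratum of single coincidences) and then apply parametric transversality to each stratum in turn. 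Once this stratification is handled, the dimension comparison $2<3$ forces a generic $2$-parameter family to miss $\Sigma$, and the remaining sup-norm and length bookkeeping is routine.
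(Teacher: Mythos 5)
Your proposal is correct and follows essentially the same route as Pan--Wang (which the paper simply cites): approximate by a piecewise geodesic in $s$, then use the fact that the set $\Sigma$ of unitaries with repeated eigenvalues is a finite union of submanifolds of codimension at least three (this is precisely \cite[Cor.~3.2]{PanWang}, recorded later in this paper as Lemma~\ref{L:codimension}) together with parametric transversality to push the resulting $2$-parameter family off $\Sigma$, and a cutoff in $s$ to pin down the endpoint when $H_1$ already avoids $\Sigma$.
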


\begin{rem}\label{R:pertur}
In above proposition, if $H_s(0)$ and $H_s(1)$ have no repeated eigenvalues respectively, then $F$ can be chosen to satisfy that 
$F_s(0)=H_s(0)$ and $F_s(1)=H_s(1)$ for all $s\in[0,1]$.	
\end{rem}

Let $Y$ be a compact metric space. Let $P\in M_{k_1}(C(Y))$ be a projection
with $rank(P)=k\leq k_1$. For each $y$, there exists a unitary $u_y\in M_{k_1}(\mathbb{C})$(depending on $y$) such that
$$
P(y)=u^*_y
\begin{bmatrix}
1&&&&&\\
&\ddots &&&&\\
&&1&&&\\
&&&0&&\\
&&&&\ddots &\\
&&&&&0	
\end{bmatrix}u_y,
$$
where there are $k$ 1's on the diagonal. If the unitary $u_y$
can be chosen to be continuous in $y$, the projection $P$ is called
a trivial projection. It is well known that any projection $Q\in M_{k_1}(C(Y))$
is locally trivial. That is, for each $y_0\in Y$, there exists an
open set $U_{y_0}$ containing $y_0$ such that $u_y$ is continuous
on $U_{y_0}$. If $P$ is a trivial projection in $M_{k_1}(C(Y))$, we have
$$
PM_{k_1}(C(Y))P\cong M_k(C(Y)).
$$

Following the notions in \cite{PanWang}, we give the
following definitions.

\begin{defn}\label{D:equiv}
Given a metric space $(Y,d)$, we denote
$$
Y^k=\underbrace{Y\times Y\times\cdots\times Y}_k.
$$
We define a equivalent relation
on $Y^k$ as follows: two elements for $(x_1,x_1,\ldots,x_k), (y_1,y_2,\ldots,y_k)$
$\in Y^k$ are equivalent if there exists a permutation $\sigma\in S_k$ such that $x_{\sigma(i)}=y_i$
for each $1\leq i\leq k$. We let
$$
P^kY=Y^k/\sim.
$$
The metric on $P^k Y$ is defined as
$$
d([x_1,x_2,\cdots,x_k],[y_1,y_2,\cdots,y_k])=\min_{\sigma\in S^k}\max_{1\leq j\leq k}|x_j-y_{\sigma(j)}|.
$$

Let us further assume that $Y$ is compact. Let $F^kY=\mbox{Hom}(C(Y), M_k(\bC))_1$, the space of all unital homomorphisms from $C(Y)$ to $M_k(\bC)$. Then for any $\phi\in F^kY$, there are $k$ points $y_1, y_2, \cdots, y_k$ (with possible repetition) and a unitary $u\in M_k(\bC)$ such that
$$
\phi(f)=
u
\begin{bmatrix}
f(y_1)&&&\\
&f(y_2)&&\\
&&\ddots&\\
&&&f(y_k)
\end{bmatrix}u^* \hspace{0.2in} \mbox{for all}~~f\in C(Y).
$$
Define $Sp(\phi)$ to be the set $\{y_1, y_2, \cdots, y_k\}$ (counting multiplicity, see \cite{Gong02}). Considering $Sp(\phi)$ as a $k$-tuple, $(y_1, y_2, \cdots, y_k)$, it  is not uniquely determined, since the order of $k$-tuple is up to a choice; but as an element in $P^kY $, it is unique. Therefore we write $Sp(\phi)\in P^kY$.  Then
 $F^kY\ni \phi \mapsto Sp(\phi)\in P^kY$ gives a continuous map  $\Pi: F^kY \to P^kY$.
\end{defn}

\begin{prop}[\cite{PanWang}, Remark 3.9]\label{R:remark}
Let $F_s$ be a path in $U(M_k(C([0,1])))$ such that
$F_s(t)$ has no repeated eigenvalues for all $(s,t)\in[0,1]\times[0,1]$. Let $\Lambda:[0,1]\times[0,1]\rightarrow P^kS^1$ be the eigenvalue
map of $F_s(t)$, i.e., $\Lambda(s,t)=[x_1(s,t),x_2(s,t),\dots,x_k(s,t)]$, where $\{x_i(s,t)\}_{i=1}^k$ are
 eigenvalues of the matrix $F_s(t)$. There are continuous
 functions $f_1,f_2, \dots,f_k:[0,1]\times[0,1]\rightarrow S^1$ such that
 $$
 \Lambda(s,t)=[f_1(s,t),f_2(s,t),\dots,f_k(s,t)].
 $$	
For each $(s,t)\in[0,1]\times[0,1]$, there exists
a unitary $U_s(t)$ such that
$$
F_s(t)=U_s(t)diag[f_1(s,t),f_2(s,t),\dots,f_k(s,t)]U_s(t)^*.
$$
\end{prop}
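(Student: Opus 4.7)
The plan is to reduce the statement to the standard lifting problem for covering spaces. The hypothesis that $F_s(t)$ has no repeated eigenvalues means $\Lambda$ takes values in the open subset $P^k_{*}S^1 \subset P^kS^1$ of unordered $k$-tuples whose entries are pairwise distinct. Writing $\Delta \subset (S^1)^k$ for the fat diagonal (tuples with at least two coinciding coordinates), the natural quotient map
$$
q \colon (S^1)^k \setminus \Delta \longrightarrow \bigl((S^1)^k \setminus \Delta\bigr)/S_k = P^k_{*}S^1
$$
is a covering map of degree $k!$, since the permutation action of $S_k$ on $(S^1)^k \setminus \Delta$ is free and properly discontinuous. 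This is the structural fact that makes the argument go.

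Next I would apply the lifting criterion. Because $[0,1]\times[0,1]$ is contractible, hence simply connected and locally path-connected, the continuous map $\Lambda \colon [0,1]^2 \to P^k_{*}S^1$ admits a continuous lift $\widetilde\Lambda \colon [0,1]^2 \to (S^1)^k \setminus \Delta$ with $q \circ \widetilde\Lambda = \Lambda$, uniquely determined by any choice of basepoint lift. Writing $\widetilde\Lambda(s,t) = (f_1(s,t),\ldots,f_k(s,t))$, the coordinate functions $f_j \colon [0,1]^2 \to S^1$ are continuous and satisfy
$$
\Lambda(s,t) = [f_1(s,t),f_2(s,t),\ldots,f_k(s,t)]
$$
for all $(s,t)\in [0,1]^2$, as required.

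For the diagonalizing unitary the proposition makes only a pointwise claim, so the argument is immediate: at each $(s,t)$ the unitary matrix $F_s(t)\in M_k(\bC)$ has $k$ distinct eigenvalues $f_1(s,t),\ldots,f_k(s,t)$, and the finite-dimensional spectral theorem supplies some unitary $U_s(t)$ with $F_s(t) = U_s(t)\,\textup{diag}[f_1(s,t),\ldots,f_k(s,t)]\,U_s(t)^{*}$. The only real obstacle is the global labeling step above; if one prefers to bypass covering-space theory, the same conclusion can be reached by covering $[0,1]^2$ by rectangles small enough that on each the simple roots of the characteristic polynomial of $F_s(t)$ are given by continuous branches, then using the simple connectedness of $[0,1]^2$ to show there is no monodromy obstruction to patching these branches into global $f_j$. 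The hypothesis that $F_s(t)$ has \emph{no} repeated eigenvalues at \emph{every} $(s,t)$, and not merely generically, is essential, as it is precisely what keeps the image of $\Lambda$ inside the part of $P^kS^1$ over which $q$ is a covering.
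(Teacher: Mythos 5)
Your covering-space argument is correct and is precisely the standard route to this statement: the $S_k$-action on $(S^1)^k\setminus\Delta$ is free (hence properly discontinuous since $S_k$ is finite), so $q$ is a covering of the distinct-tuple locus in $P^kS^1$, and the simple connectedness of $[0,1]^2$ yields the global lift, after which the pointwise diagonalization is just the finite-dimensional spectral theorem. The paper does not reprove this proposition (it cites \cite{PanWang}), but this is the same mechanism the paper invokes via Hatcher's lifting criterion in its proof of Lemma~\ref{L:lifting}, so your proof matches the paper's approach; note only that the remark following the proposition strengthens the pointwise $U_s(t)$ to a continuous one, which your argument does not need to address.
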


Fix $1\leq i\leq n$. For each $(s,t)\in [0,1]\times [0,1]$, let $p_i(s,t)$ be the spectral projection of $F_s(t)\in M_n(\bC)$ with respect to the eigenvalue $f_i(s,t)$ (of $F_s(t)$)---this is well defined rank one projection continuously depending on $(s,t)$, since the continuous matrix value function $F_s(t)$ has distinct eigenvalues. Hence $F_s(t)=\sum_{i=1}^k f_i(s,t)p_i(s,t)$.
Since all projections in $M_n(C([0,1]\times [0,1]))$ are trivial, it is straight forward to prove that the unitary $U_s(t)$ above can be chosen to depend on $s$ and $t$ continuously.

\begin{prop}[\cite{PanWang}, Lemma 3.11] \label{L:distinct}
Let $F_s$ be a path in $U(M_n(C([0,1])))$ and $f^1_s(t),f_s^2(t),\cdots$,
$f_s^n(t)$ be continuous functions
such that
$$
F_s(t)=U_s(t)diag[f_s^1(t),f_s^2(t),\dots,f_s^n(t)]U_s(t)^*,
$$
where $U_s(t)$ are unitaries. Suppose that for any $(s,t)\in[0,1]\times[0,1]$, $f_s^i(t)\neq f_s^j(t)$ if $i\neq j$, then
$$
length_s(F_s)\geq\max_{1\leq i\leq n}\{length_s(f_s^i)\},
$$
where $f_s^i$ is regarded as a path in $U(C([0,1]))$.	
\end{prop}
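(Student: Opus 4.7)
The plan is to reduce to piecewise-smooth paths and then use a first-order spectral-projection computation. For each $\eps>0$, apply Proposition~\ref{L:pertur} to produce a piecewise-smooth path $\tilde F_s$ in $U(M_n(C([0,1])))$ with $\|F_s-\tilde F_s\|_\infty<\eps$ for all $s$, with $|\text{length}_s(F_s)-\text{length}_s(\tilde F_s)|<\eps$, and with $\tilde F_s(t)$ still having distinct eigenvalues for every $(s,t)\in[0,1]\times[0,1]$. By compactness of $[0,1]\times[0,1]$ and the simple-spectrum hypothesis, the eigenvalue gaps of $F_s(t)$ admit a uniform lower bound $\delta_0>0$; for $\eps<\delta_0/2$ the continuous eigenvalue labelings of $F$ and $\tilde F$ can be matched so that $\tilde f_s^i\to f_s^i$ uniformly on $[0,1]\times[0,1]$ as $\eps\to 0$. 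Lower semicontinuity of length under uniform convergence then reduces the proposition to the smooth case.

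Assume now $F_s$ is piecewise smooth in $s$. By the construction following Proposition~\ref{R:remark}, the rank-one spectral projection $p_s^i(t)$ of $F_s(t)$ for the eigenvalue $f_s^i(t)$ depends piecewise smoothly on $(s,t)$. Differentiate the identity $F_s\,p_s^i=f_s^i\,p_s^i$ in $s$ and sandwich by $p_s^i$ on both sides; using that $p\cdot(\partial_s p)\cdot p=0$ for any smooth path of projections (obtained by differentiating $p^2=p$), all cross-terms drop out and one obtains
\begin{equation*}
p_s^i(t)\,(\partial_s F_s(t))\,p_s^i(t) \;=\; (\partial_s f_s^i(t))\cdot p_s^i(t).
\end{equation*}
Since $p_s^i(t)$ is a norm-one projection and $f_s^i(t)\in\bC$ is a scalar, taking operator norms produces the pointwise estimate
\begin{equation*}
|\partial_s f_s^i(t)|\;\le\;\|\partial_s F_s(t)\|_{M_n(\bC)}\qquad\text{for every }(s,t)\in[0,1]\times[0,1].
\end{equation*}

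Taking the supremum in $t$ yields $\|\partial_s f_s^i\|_\infty\le\|\partial_s F_s\|_\infty$ for each $s$, and integrating in $s$ gives $\text{length}_s(f_s^i)\le\text{length}_s(F_s)$; the maximum over $i$ then gives the desired inequality. The main subtlety lies in the approximation step, where one must identify the continuous eigenvalue labelings of $F$ with those of $\tilde F$ uniformly in $(s,t)$; this is handled by the uniform eigenvalue-gap bound coming from compactness and the simple-spectrum hypothesis, after which lower semicontinuity of length closes the argument.
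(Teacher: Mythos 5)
Your proof is correct. The paper itself does not prove this proposition---it is cited verbatim from \cite{PanWang}, Lemma 3.11---so there is no ``paper's own proof'' here to compare against; what you have produced is an independent, self-contained argument.

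The core computation is sound: differentiating $F_s p_s^i = f_s^i p_s^i$ in $s$ and compressing by $p_s^i$ kills both $\partial_s p$-terms (the right-hand one because $p(\partial_s p)p = 0$, the left-hand one because $p_s^i F_s = f_s^i p_s^i$ for a normal matrix, so $p_s^i F_s(\partial_s p_s^i)p_s^i = f_s^i\, p_s^i(\partial_s p_s^i)p_s^i = 0$), leaving the clean identity $p_s^i(\partial_s F_s)p_s^i = (\partial_s f_s^i)p_s^i$, from which $|\partial_s f_s^i(t)| \le \|\partial_s F_s(t)\|$ follows by taking norms. Taking the supremum over $t$ and integrating in $s$ gives the length inequality. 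The reduction to the piecewise-smooth case is also handled adequately: you invoke Proposition \ref{L:pertur}, note that the uniform positive eigenvalue gap of $F_s(t)$ on the compact square $[0,1]\times[0,1]$ lets you canonically match the eigenvalue labelings of $F$ and the perturbation $\tilde F$ once $\eps$ is smaller than half the gap (and $[0,1]^2$ is simply connected, so no holonomy obstruction arises), and then pass to the limit using lower semicontinuity of length under uniform convergence. One small thing worth stating explicitly is that if $F_s$ is not rectifiable the inequality is trivial, so that Proposition \ref{L:pertur}, which is stated for rectifiable paths, does apply in the nontrivial case; and that the eigenvalue matching uses that $F_s(t)$ and $\tilde F_s(t)$ are normal, so that spectra move at most by $\|\tilde F_s(t)-F_s(t)\|$ (Bauer--Fike with condition number $1$). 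Neither omission affects correctness.
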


\begin{defn}[\cite{Blackadar}, Definition 1.1]\label{D:eigenvar}
Let $a=a^*\in PM_n(C(X))P$, where $X$ is path connected compact metric space. For each $x\in X$, the eigenvalues of $a(x)\in P(x)M_n(\bC)P_n(x)\cong M_{rank(P)}(\bC)$ form a set of (possible repeat)   $rank(P)$ real numbers, which could be regarded as an element of $Eg(a)(x)\in P^k \mathbb{R}$, where $k=rank (P)$. On the other hand the topology on the space $\mathbb{R}$ is given by the linear order on $\mathbb{R}$ which induces a natural continuous map from $P^k\mathbb{R}$ to $\mathbb{R}^k$, by order the $k$-tuple in the increasing order; in such a  way, we identify $P^k\mathbb{R}$ as a subset of $\mathbb{R}^k$.  The map  $x\mapsto Eg(a)(x)\in P^k\mathbb{R}\subset \mathbb{R}^k$ gives $k$ continuous maps from $X$ to $\mathbb{R}$. We will call these $k$ continous maps the  eigenvalue list $E(a)$ of $a$. Namely,
the {\bf eigenvalue list} of $a$ is defined as
$$
E(a)(x)=\{h_1(x),h_2(x),\ldots,h_{k}(x)\},
$$
where $h_i(x)$ is the $i$-th lowest eigenvalue of $a(x)$,
counted with multiplicity.

The {\bf variation of the eigenvalues} of $a$ is
denoted by $EV(a)$ and is defined as
$$
EV(a)=\max_{1\leq i\leq n}\{\max_{t,s\in X}|h_i(t)-h_i(s)|\}.
$$

Here, when we use $Eg(a): X\to P^k\mathbb{R}$ and $E(a): X\to \mathbb{R}^k$, we have $E(a)=\iota\circ Eg(a)$, where $\iota: P^k\mathbb{R} \to \mathbb{R}^k$ is the natural  inclusion.

\end{defn}

\begin{rem}\label{eigen-june-16}

(1)~~ In this paper,   we will often   consider $a\in A_+$ with $\|a\|\leq 1$. Then $Sp(a)\subset [0,1]$. This element $a$ naturally defines a homomorphism $\phi: C([0,1]) \to A$ by $\phi(h)=a$, where $h:[0,1] \to [0,1]$ is identity function: $h(t)=t$. Let $A=PM_n(C(X))P$ as in \ref{D:eigenvar} . Then $E(a)$ is a map from $X$ to $[0,1]^k$
 (where $k=rank(P)$) and $Eg(a)$ is a map from $X$  to $P^k[0,1]$.

 (2)~~Let $P,Q\in M_n(C(X))$ be projections with $P<Q$. An element  $a\in (PM_n(C(X))P)_+$ can also be regarded as an element in $QM_n(C(X))Q$. The eigenvalue list $E_{PM_n(C(X))P}(a)$ of $a$ as an element in $PM_n(C(X))P$ and eigenvalue list $E_{QM_n(C(X))Q}(a)$ of $a$ as an element in $QM_n(C(X))Q$ is related in the following way. Suppose $rank(P)=k$ and $rank(Q)=l$. If $$E_{PM_n(C(X))P}(a)=\{h_1(x),h_2(x),\ldots,h_{{k}}(x)\},$$
 then  $$E_{QM_n(C(X))Q}(a)=\{\underbrace{0,\cdots,0}_{l-k},h_1(x),h_2(x),\ldots,h_{{k}}(x)\}.$$
 In particular, this positive element $a$ has the same eigenvalue variation no matter it is regarded as the element in which of the two algebras. (This is not true for general self-adjoint elements.) So when we discuss eigenvalue list or eigenvalue variation of a positive element $a$ in a corner sub algebra $PM_n(C(X))P$ of $QM_n(C(X))Q$, we don't need to specify in which algebras the calculation are made---that is, we will omit those $l-k$ constant $0$  functions from our eigenvalue list.

 (3)~~As in (2), for some (not necessarily  positive) self-adjoint elements  $a\in PM_n(C(X))P \subset QM_n(C(X))Q$, we can also ignore  in which algebra (in  the corner sub algebra $PM_n(C(X))P$ or in the algebra $QM_n(C(X))Q$), the calculation are made, when we calculate eigenvalue list and eigenvalue variation. The case is that none of the functions in the eigenvalue list of $a$ are  crossing over point $0$, that is, they are either non-positive  functions or non-negative functions. More precisely,  if
 $$E_{PM_n(C(X))P}(a)=\{h_1(x),\ldots,h_i(x), h_{i+1}(x),\cdots, h_{{k}}(x)\}$$ with $h_i(x)\leq 0\leq h_{i+1}(x)$ for all $x\in X$, then  
 $$
 E_{QM_n(C(X))Q}(a)=\{h_1(x),\ldots,h_i(x),\underbrace{0,\cdots,0}_{l-k},h_{i+1}(x),h_{{k}}(x)\}.
 $$ In this case, we will also omit those $l-k$ constant $0$  functions from our eigenvalue list for $E_{QM_n(C(X))Q}(a)$.

\end{rem}

\begin{defn}

If $A=\lim(\oplus_{j=1}^{k_n}P_{n,j}M_{[n,j]}(C(X_{n,j}))P_{n,j},\phi_{n,m})$
is a unital inductive limit system with simple limit, the following slow dimension growth condition was introduced by [BDR, Math Scand]
$$
\lim_{n\rightarrow \infty}\max_{j}\lbrace\frac{\dim(X_{n,j})+1}{rank (P_{n,j})}\rbrace=0.
$$
For general $AH$ inductive limit system, we will use the following slow dimension growth condition: for any summand $A_n^i=P_{n,i}M_{[n,i]}(C(X_{n,i}))P_{n,i}$ of a fixed $A_n$,
 $$
\lim_{m\rightarrow \infty}\max_{i,j}\lbrace\frac{\dim(X_{m,j})+1}{rank (\phi_{n,m}^{i,j}({\bf 1}_{A_n^i}))}~|~\phi_{n,m}^{i,j}({\bf 1}_{A_n^i})\not=0\rbrace=0,
 $$
where $\phi_{n,m}^{i,j}$ is the partial map of $\phi_{n,m}$ from $A_n^i$
to $A_m^j$. This notion of slow dimension growth condition is used in most literatures (see \cite{BratteliElliott}). In particular in this definition, it is automatically true that $\lim_{m\to \infty} rank (P_{m,j})=\infty$.

An inductive limit system $A=\lim(\oplus_{j=1}^{k_n}P_{n,j}M_{[n,j]}(C(X_{n,j}))P_{n,j},\phi_{n,m})$ is called of no dimension growth
if $\sup_{n,j}\dim(X_{n,j})<+\infty$. For general nonsimple inductive
limit system, no dimension growth does not imply slow dimension growth,
as it does not automatically imply that $\lim_{m\rightarrow +\infty}rank(P_{m,j})=\infty$.

 We  avoid to use the more general concept of slow dimension growth introduced by Gong \cite{Gong97} which does not imply that $\lim_{m\to \infty} rank (P_{m,j})=\infty$, since in this case our main theorem is not true. See Proposition \ref{P:finite}.

\end{defn}

\begin{prop}[\cite{BratteliElliott}, Corollary 1.3 and 1.4]\label{L:variation}
Let $A=\lim_{\rightarrow}(A_n,\phi_{n,m})$ be a $C^*$-algebra which
is the inductive limit of $C^*$-algebras $A_n$ with morphisms $\phi_{n,m}:A_n\rightarrow A_m$.
Assume that each $A_n$ has the form
$$
A_n=\oplus_{k=1}^{k_n}P_{n,k}M_{[n,k]}(C(X_{n,k}))P_{n,k}
$$
where $k_n$ are positive integers, $X_{n,k}$ are  connected compact Hausdorff spaces,
$[n,k]$ are positive integers and $M_{[n,k]}$ are the $C^*$-algebras of $[n,k]\times [n,k]$
matrices. If $A$ has slow dimension growth  (see Corollary 1.4 of \cite{BratteliElliott}) or has no dimension growth (see Corollary 1.3 of \cite{BratteliElliott}), the following are equivalent:
\begin{enumerate}
\item[(1)] $A$ has real rank zero;
\item[(2)] For any $a\in (A_n)_+$ with $\|a\|=1$ and $\varepsilon>0$,
there exists an $m$ such that every partial homomorphism
$$
\phi_{n,m}^{ij}:P_iM_{[n,i]}(C(X_{n,i}))P_i\rightarrow P_jM_{[m,j]}(C(X_{m,j}))P_j
$$
of $\phi_{n,m}$(i.e., the composition of the restriction of the
map $\phi_{n,m}$ on the $i$-th block of $A_n$ and the
quotient map from $A_m$ to the $j$-th block of $A_m$) satisfies that
$$
EV(\phi_{n,m}^{ij}(a))<\eps.
$$
\end{enumerate}
\end{prop}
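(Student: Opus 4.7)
The plan is to prove the two implications by the standard Bratteli--Elliott approach, combining functional calculus with bundle-theoretic perturbation. For (1) $\Rightarrow$ (2), I would start with $a\in (A_n)_+$, $\|a\|=1$ and $\varepsilon>0$. Real rank zero of $A$ provides a finite-spectrum self-adjoint $b\in A$ with $\|\phi_{n,\infty}(a)-b\|<\varepsilon/3$. Lifting $b$ back to some $A_m$ and applying a small functional-calculus perturbation to restore finiteness of the spectrum yields a self-adjoint $b_m\in A_m$ with finite spectrum such that $\|\phi_{n,m}(a)-b_m\|<\varepsilon/2$. Because the spaces $X_{m,j}$ are connected, a finite-spectrum self-adjoint element of $P_{m,j}M_{[m,j]}(C(X_{m,j}))P_{m,j}$ is a real linear combination of mutually orthogonal projections of constant rank, hence has constant eigenvalue list and $EV(b_m^j)=0$. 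Fiberwise Weyl's inequality (the sorted eigenvalues of two self-adjoint matrices within $\delta$ in operator norm differ by at most $\delta$) then gives $EV(\phi_{n,m}^{ij}(a))\leq 2\|\phi_{n,m}^{ij}(a)-b_m^j\|<\varepsilon$, which is exactly (2).

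For (2) $\Rightarrow$ (1), I would reduce by density to a self-adjoint element $a\in (A_n)_{sa}$ with $\|a\|\leq 1$, and then to $(A_n)_+$ by shifting and rescaling. Hypothesis (2) supplies $m\geq n$ so that each partial image $c:=\phi_{n,m}^{ij}(a)$ has eigenvalue list $(h_1(x),\ldots,h_k(x))$ with every $h_l$ varying by less than $\varepsilon/2$. I would then partition the union of the ranges of the $h_l$ into short intervals of length on the order of $\varepsilon$ and apply a partition of unity on $\mathbb{R}$ via continuous functional calculus to produce an element $c'$ close to $c$ whose spectrum lies in a finite set. The nontrivial step is to upgrade the resulting cut-offs $f_l(c)$ to honest mutually orthogonal projections of constant rank summing to $P_{m,j}$; this is where the slow or no dimension growth hypothesis enters, since the existence of such small perturbations on a CW complex $X$ requires $\dim X$ to be suitably small relative to $rank(P_{m,j})$, and standard obstruction theory for complex vector bundles then supplies the needed projection adjustment.

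The main obstacle is the (2) $\Rightarrow$ (1) direction: a bound on $EV$ gives only scalar-valued control on the eigenvalue functions, whereas real rank zero requires a genuine finite-spectrum perturbation, which in turn demands control over the continuity of the spectral projections themselves. Bridging this gap is precisely where the dimension growth hypothesis plays its role via bundle theory, and this step is the technical core of Corollaries 1.3 and 1.4 of \cite{BratteliElliott}, which I would invoke rather than reproduce in full.
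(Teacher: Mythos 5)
The paper does not prove this proposition; it is quoted directly from \cite{BratteliElliott} (Corollaries 1.3 and 1.4), with only the added remark that $(1)\Rightarrow(2)$ holds in general and that $(2)\Rightarrow(1)$ for $\dim X_{n,k}\le 2$ predates \cite{BratteliElliott}. Your sketch reproduces the standard Bratteli--Elliott line of argument and is essentially sound, so the comparison is less ``a different route from the paper'' and more ``a summary of the route the cited reference takes.'' One small technical gap in your $(1)\Rightarrow(2)$ sketch is worth flagging: after obtaining a finite-spectrum $b\in A$ with $\|\phi_{n,\infty}(a)-b\|$ small, you cannot simply ``lift $b$ back to $A_m$'' and conclude that the lifted element $b_m$ is close to $\phi_{n,m}(a)$ in operator norm, because in an inductive limit $\|\phi_{m,\infty}(c)\|$ bounds only the limiting norm, not $\|c\|$ at a fixed finite stage. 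The usual fix is either to push further out along the sequence, using $\|\phi_{m,\infty}(c)\| = \lim_{m'\to\infty}\|\phi_{m,m'}(c)\|$, so that at a later stage $m'$ the images are genuinely close and $\phi_{m,m'}(b_m)$ still has finite spectrum; or, more directly, to observe that the spectrum of $\phi_{n,m}(a)$ becomes $\eps$-clustered around a finite set for $m$ large and then apply continuous functional calculus inside $A_m$ to manufacture the finite-spectrum comparison element there, rather than lifting one from $A$. Once that is repaired, your connectedness argument (constant-rank spectral projections on a connected base give $EV=0$) and the fiberwise Weyl estimate are exactly right. For $(2)\Rightarrow(1)$ your identification of the real difficulty --- upgrading the functional-calculus cut-offs $f_l(c)$ to honest mutually orthogonal projections of constant rank, which is where the slow or no dimension growth hypothesis enters via obstruction theory --- correctly locates the technical core of the Bratteli--Elliott corollaries, and deferring that step to the cited reference is entirely consistent with how the present paper treats the result.
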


In general, (1) implies (2) is always true. Predated  \cite{BratteliElliott}, it was proved in \cite{Blackadar} that
if $\dim X_{n,k}\leq 2$ for all $n$ and $k$, then (2) implies (1).

The following proposition and remark are to discuss how the eigenvalue function behaves under a homomorphism  from a single block to a single block.

\begin{prop}[\cite{ElliottGong}, Section 1.4]\label{P:singleblock}
Let $\phi: QM_{l_1}(C(X))Q \to PM_{k_1} (C(Y))P$ be a unital homomorphism, where $X$, $Y$ are connected finite simplicial complexes, and $P$, $Q$
are projections in $M_{l_1}(C(X))$ and $M_{k_1} (C(Y))$ respectively. Assume that
$rank(P)=k$, which is a multiple of $rank(Q)=l$. Then for each $y\in Y$,
$\phi(f)(y)$ only depends on the value of $f\in QM_{l_1}(C(X))Q$ at finite
many points $x_1(y),x_2(y),\ldots,x_{k/l}(y)$, where $x_i(y)$ may repeat.
In fact, if we identify $Q(x_i(y))M_{l_1}(\mathbb{C})Q(x_i(y))$ with $M_l(\mathbb{C})$, and still denote the image of $f(x_i(y))$ in $M_l(\mathbb{C})$
by $f(x_i(y))$, then there is a unitary $U_y\in M_{k_1}(C(Y))$ such that
$$
\phi(f)(y)=
P(y)U_y
\begin{bmatrix}
f(x_1(y))_{l\times l}&&&&&&\\
&f(x_2(y))_{l\times l}&&&&&\\
&&\ddots&&&&\\
&&&f(x_{k/l}(y))_{l\times l}&&&\\
&&&&0&&\\
&&&&&\ddots&\\
&&&&&&0
\end{bmatrix}U_y^*P(y).
$$
Obviously, $U_y$ depends on the identification of $Q(x_i(y))M_{l_1}(\mathbb{C})Q(x_i(y))$ and $M_{l}(\mathbb{C})$.

We denote the set (with possible multiplicity)  $\{x_1(y),x_2(y),\cdots,x_{k/l}(y)\}$ by $Sp(\phi|_y)$.

\end{prop}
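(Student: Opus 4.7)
The plan is to fix $y \in Y$, reduce to a unital homomorphism $\phi_y : A \to M_k(\mathbb{C})$ (where $A := QM_{l_1}(C(X))Q$) by composing with evaluation at $y$, and then decompose $\phi_y$ using the central subalgebra $Z(A) \cong C(X)$.

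First, I would identify the center of $A$: because $X$ is connected and $Q$ has constant rank $l$, the algebra $A$ is locally of the form $M_l(C(U))$, so $Z(A) = \{Qg : g \in C(X)\} \cong C(X)$. Composing $\phi$ with evaluation $\pi_y : PM_{k_1}(C(Y))P \to P(y)M_{k_1}(\mathbb{C})P(y) \cong M_k(\mathbb{C})$ gives $\phi_y$, and its restriction $\rho := \phi_y|_{Z(A)} : C(X) \to M_k(\mathbb{C})$ is given, by spectral theory, by distinct points $y_1, \ldots, y_s \in X$ and mutually orthogonal projections $p_1, \ldots, p_s \in M_k(\mathbb{C})$ with $\sum_i p_i = \mathbf{1}$, via $\rho(g) = \sum_i g(y_i) p_i$.

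The key step is to show that $\phi_y(f)\, p_i = 0$ whenever $f \in A$ satisfies $f(y_i) = 0$, so that $f \mapsto \phi_y(f)\, p_i$ factors through evaluation at $y_i$ and yields a unital homomorphism $\psi_i : M_l(\mathbb{C}) \cong Q(y_i) M_{l_1}(\mathbb{C}) Q(y_i) \to p_i M_k(\mathbb{C}) p_i$. For this, given $\varepsilon > 0$, continuity of $f$ provides a neighborhood $V$ of $y_i$ where $\|f(x)\| < \varepsilon$, and Urysohn's lemma provides $h \in C(X)$ with $h(y_i) = 1$, $h(y_j) = 0$ for $j \neq i$, and $\mathrm{supp}(h) \subset V$. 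Since $\rho(h) = p_i$, we obtain $\phi_y(f)\, p_i = \phi_y(f \cdot Qh)$, whose norm is at most $\|f \cdot Qh\| < \varepsilon$; letting $\varepsilon \to 0$ gives the claim.

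Finally, each $\psi_i : M_l(\mathbb{C}) \to p_i M_k(\mathbb{C}) p_i$ is (up to inner unitary equivalence) an $m_i$-fold amplification with $m_i l = \mathrm{rank}(p_i)$; in particular $l$ divides $\mathrm{rank}(p_i)$, and $\sum_i m_i = k/l$. Listing each $y_i$ with multiplicity $m_i$ produces the points $x_1(y), \ldots, x_{k/l}(y)$, and the unitaries realizing the amplifications assemble into $V_y \in M_k(\mathbb{C})$, which, padded with zeros, extends to a unitary $U_y \in M_{k_1}(\mathbb{C})$ implementing the stated block diagonal form. The main obstacle is the key step above; once that factoring is in place, the rest follows from the classification of unital $*$-homomorphisms between finite-dimensional matrix algebras over $\mathbb{C}$.
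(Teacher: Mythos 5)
The paper does not prove this proposition; it cites Elliott--Gong, Section 1.4, and your argument is correct and is the standard one underlying that reference: decompose $\phi_y$ via the central subalgebra $C(X)\cong Z\bigl(QM_{l_1}(C(X))Q\bigr)$ to obtain finitely many points $y_i$ and mutually orthogonal projections $p_i$ commuting with $\phi_y(A)$, use the Urysohn estimate to show each cut-down $f\mapsto p_i\phi_y(f)p_i$ factors through evaluation at $y_i$, and classify the resulting unital maps $M_l(\mathbb{C})\to p_iM_k(\mathbb{C})p_i$ as inner-conjugate amplifications, which also forces $l\mid\operatorname{rank}(p_i)$ and $\sum_i m_i = k/l$. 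One small wording slip: a $k\times k$ unitary cannot be extended to a $k_1\times k_1$ unitary by \emph{padding with zeros}; rather, one picks $U_y\in M_{k_1}(\mathbb{C})$ carrying the standard rank-$k$ corner onto $P(y)$ and intertwining with your $V_y$, which is exactly what the cut-off $P(y)U_y\,(\cdot)\,U_y^*P(y)$ appearing in the statement is designed to accommodate.
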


\begin{rem}\label{june-15-1} One can regard $Sp(\phi|_y):=[x_1(y),x_2(y),\cdots,x_{k/l}(y)]$ as an element in $P^{k/l}X$. Then $Y\ni y \mapsto Sp(\phi|_y)\in P^{k/l}(X)$ defines a map $\phi^*: Y\to P^{k/l}X$.

Let $X, Y, Z$ be path connected compact metric spaces, and let $\alpha: Y \to P^kX$ and $\beta: Z\to P^l Y$  be two maps. Then $\alpha$  naturally induces a map $\tilde{\alpha}: P^l Y\to P^{kl} X$. We will call the the map $\tilde{\alpha}\circ\beta: Z \to P^{kl} X$ the composition of $\alpha$ and $\beta$ and denote it by $\alpha\circ \beta$.
Namely,  if $\alpha(y)=[\alpha_1(y),\alpha_2(y),\cdots, \alpha_k(y)]\in P^kX$, for all $y\in Y$ and  $\beta(z)=[\beta_1(z),\beta_2(z),\cdots, \beta_l(y)]\in P^lY$, for all $z\in Z$, then $\alpha\circ \beta$ is defined as  $$\alpha\circ\beta(z)=[\alpha_i(\beta_j(z)): 1\leq i\leq k, 1\leq j\leq l]\in P^{kl}(X) ~~~\mbox{for all}~~ z\in Z.$$

We have the following facts:\\

(a) Let $\phi: QM_{l_1}(C(X))Q \to PM_{k_1} (C(Y))P$ and $\psi: PM_{k_1}(C(Y))P \to RM_{m_1}((Z))R$ be two unital homomorphisms, then $(\psi\circ \phi)^*=\phi^*\circ \psi^*: Z\to P^{st}X$, with $\phi^*: Y\to P^sX$ and $\psi^*: Z\to P^tY$, where $s=rank(P)/rank(Q)$ and $t=rank(R)/rank(P)$.\\

(b)  Let $\phi: QM_{n}(C(X))Q \to PM_{m} (C(Y))P$($rank(Q)=k, rank(P)=kl$) be a unital homomorphism and let $f\in(QM_{n}(C(X))Q)_{s.a}$.  Use the above notation, we have
$$Eg(\phi(f))=Eg(f)\circ\phi^*: Y\to P^{kl}\mathbb{R},$$
(see \ref{D:eigenvar}). Let us write the eigenvalue list $E(f): X\to \mathbb{R}^k$ (of $f$) as $$E(f)(x)=\{h_1(x)\leq h_2(x)\leq\cdots \leq h_k(x)\}$$ with $h_i: X\to \mathbb{R}$ being continous functions for all $i$. It follows that
 $$Eg(\phi(f)(y)=[(h_1\circ \phi^*)(y),(h_2\circ \phi^*)(y),\cdots,(h_k\circ \phi^*)(y)]\in P^{kl}\mathbb{R}.$$
 For each $1\leq i\leq k$, we write the element $(h_i\circ \phi^*)(y)\in P^l\mathbb{R}$ as element \\
 $(g_{i,1}(y), g_{i,2}(y),\cdots, g_{i,l}(y))\in \mathbb{R}^l$
 in increasing order $g_{i,j}(y)\leq g_{i,j+1}(y)$. Then $g_{i,j}: Y\to [0,1]$ are continous functions with $rang(g_{i,j})\subset rang(h_i)$. Also we have
$Eg(\phi(f))(y)=[g_{i,j}(y); 1\leq i\leq k, 1\leq j\leq l]$.  (Note that, in this calculation, we did not get precise order of all the eigenfunctions $g_{i,j}$, so we use $Eg(\phi(f))(y)$ instead of $E(\phi(f))(y)$.)

\end{rem}

\section{Main Theorem}

The following lemma and its corollary are well known and we omit the proofs.

\begin{lem}\label{L:compare}
If $u\in U_0(A)$ and $\|u-{\bf 1}_A\|<\eps<1$,
then $cel(u)\leq \frac{\pi}{2}\eps$.
\end{lem}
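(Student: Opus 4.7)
The plan is to produce a single self-adjoint $h$ with $u=\exp(ih)$ and $\|h\|\le\frac{\pi}{2}\varepsilon$, using the continuous functional calculus on the principal branch of the logarithm; then the conclusion follows directly from the definition of $cel(u)$.

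First I would locate the spectrum. If $\lambda\in\mathrm{Sp}(u)$, write $\lambda=e^{i\theta}$ with $\theta\in(-\pi,\pi]$. Then $|\lambda-1|=2|\sin(\theta/2)|$, and the hypothesis $\|u-\mathbf{1}_A\|<\varepsilon$ forces $2|\sin(\theta/2)|<\varepsilon$, so $|\theta|\le 2\arcsin(\varepsilon/2)$. Because $\varepsilon<1$, this bound is strictly less than $2\arcsin(1/2)=\pi/3$, in particular $\mathrm{Sp}(u)$ avoids $-1$ and sits inside the open right half of the unit circle.

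Next I would define $h$ by functional calculus. Let $\log$ denote the principal branch of the logarithm, which is continuous on $\mathbb{C}\setminus(-\infty,0]$ and satisfies $\log(e^{i\theta})=i\theta$ for $|\theta|<\pi$. Set $h=-i\log(u)\in A$. Since $u$ is unitary and the spectrum of $u$ is symmetric under conjugation only implicitly, one checks directly that $h=h^{*}$, $\exp(ih)=u$, and
\[
\|h\|=\max_{\lambda\in\mathrm{Sp}(u)}|\arg\lambda|\le 2\arcsin(\varepsilon/2).
\]

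Finally I would bound $2\arcsin(\varepsilon/2)$ by $\frac{\pi}{2}\varepsilon$. The map $x\mapsto\arcsin x$ is convex on $[0,1]$ (its derivative $1/\sqrt{1-x^{2}}$ is increasing there), equals $0$ at $x=0$ and $\pi/2$ at $x=1$, so the graph lies below its chord: $\arcsin x\le \frac{\pi}{2}x$ for $x\in[0,1]$. Applying this with $x=\varepsilon/2$ gives $\|h\|\le 2\cdot\frac{\pi}{2}\cdot\frac{\varepsilon}{2}=\frac{\pi}{2}\varepsilon$. Since $u=\exp(ih)$ is a representation as a product of a single exponential of a self-adjoint with norm at most $\frac{\pi}{2}\varepsilon$, the definition of exponential length yields $cel(u)\le\frac{\pi}{2}\varepsilon$. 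No step here is really an obstacle; the only thing to be a bit careful about is that the assumption $\varepsilon<1$ is exactly what keeps the spectrum of $u$ away from the branch cut, so the principal logarithm is unambiguously defined on $\mathrm{Sp}(u)$.
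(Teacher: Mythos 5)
Your proof is correct and is the standard argument; the paper explicitly omits the proof of this lemma (``well known''), so there is no alternative route in the paper to compare against. The key observations are all in order: since $\|u-\mathbf{1}_A\|<\eps<1$, the spectrum of $u$ lies in the arc $\{e^{i\theta}:|\theta|<2\arcsin(\eps/2)\}$, which avoids $-1$, so $h=-i\log u$ is a well-defined self-adjoint element with $\exp(ih)=u$ and $\|h\|\le 2\arcsin(\eps/2)$; and the chord bound for the convex function $\arcsin$ on $[0,1]$ gives $2\arcsin(\eps/2)\le\frac{\pi}{2}\eps$. One phrasing quibble: the remark that ``the spectrum of $u$ is symmetric under conjugation only implicitly'' is not what makes $h$ self-adjoint --- what matters is simply that $\overline{\log z}=-\log z$ for $z$ on the unit circle away from $-1$, so $(\log u)^{*}=-\log u$ by continuous functional calculus --- but this is a matter of exposition, not a gap in the argument.
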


\begin{cor}\label{C:compare}
If $u,v\in U_0(A)$ and $\|u-v\|<\eps<1$. then $|cel(u)-cel(v)|\leq \frac{\pi}{2}\eps$.	 \end{cor}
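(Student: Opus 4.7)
The plan is to reduce Corollary \ref{C:compare} to Lemma \ref{L:compare} by a standard multiplicative trick, using the sub-additivity of $cel$ under products.

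First I would observe that $cel$ is sub-additive on $U_0(A)$: if $w_1=\prod_{j=1}^k\exp(ih_j)$ and $w_2=\prod_{j=1}^l\exp(ih'_j)$ are any factorizations, then concatenating the two products gives a factorization of $w_1w_2$ with total length $\sum\|h_j\|+\sum\|h'_j\|$, so taking the infimum over both sides yields $cel(w_1w_2)\le cel(w_1)+cel(w_2)$.

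Next I would set $w=uv^*\in U_0(A)$ (it lies in $U_0(A)$ because $U_0(A)$ is a subgroup) and estimate
\[
\|w-{\bf 1}_A\|=\|uv^*-vv^*\|=\|(u-v)v^*\|\le\|u-v\|<\eps.
\]
Lemma \ref{L:compare} then gives $cel(w)\le\tfrac{\pi}{2}\eps$. Since $u=wv$, sub-additivity yields $cel(u)\le cel(w)+cel(v)\le\tfrac{\pi}{2}\eps+cel(v)$, hence $cel(u)-cel(v)\le\tfrac{\pi}{2}\eps$. Reversing the roles of $u$ and $v$ (using $v=(vu^*)u$ and $\|vu^*-{\bf 1}_A\|<\eps$) gives the reverse inequality, and the two together yield $|cel(u)-cel(v)|\le\tfrac{\pi}{2}\eps$.

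There is no real obstacle here; the only small point to be careful about is confirming that both sides of the inequality are finite (or at least that the subtraction makes sense). This is fine because one of the two factorizations used above expresses each of $u,v$ as a product of an element close to the identity and the other unitary, so once $cel(v)$ is finite we get $cel(u)$ finite and vice versa; in the degenerate case where both are infinite the inequality is vacuous.
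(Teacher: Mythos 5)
Your argument is correct and is the standard one (the paper explicitly omits the proof of this corollary as well known). Sub-additivity of $cel$ under products, the estimate $\|uv^*-{\bf 1}_A\|\le\|u-v\|<\eps$, and Lemma~\ref{L:compare} applied to $uv^*$ and $vu^*$ give exactly the two one-sided inequalities you want; note also that every element of $U_0(A)$ is by definition a finite product of exponentials, so $cel(u)$ and $cel(v)$ are automatically finite and the subtraction is always well defined.
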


\begin{thm}\label{T:distinct}
Suppose $u\in U_0(M_n(C([0,1])))$ have distinct eigenvalues
$\alpha_1(t),\alpha_2(t),\dots,\alpha_n(t)$, where
$\alpha_1,\alpha_2,\dots,\alpha_{n}:[0,1]\rightarrow S^1$
are continuous. Then
$$
cel(u)\geq\max_{1\leq j\leq n}cel(\alpha_j).
$$	
\end{thm}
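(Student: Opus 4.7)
\medskip

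\noindent\textbf{Proof plan.} The plan is to compare any rectifiable path from $u$ to $\mathbf{1}$ with the individual eigenvalue paths, using Propositions \ref{L:pertur}--\ref{L:distinct} to reduce to a path with no repeated eigenvalues. Fix $\eps>0$ and let $H_s$ be a rectifiable path in $U(M_n(C([0,1])))$ from $H_0=u$ to $H_1=\mathbf{1}$ with $length_s(H_s)\leq cel(u)+\eps$. Since $u(t)=H_0(t)$ has no repeated eigenvalues for every $t$, Remark \ref{R:pertur} applied to Proposition \ref{L:pertur} produces a piecewise smooth path $F_s$ with $F_s(t)$ having no repeated eigenvalues for all $(s,t)\in[0,1]^2$, with $\|F_s-H_s\|_\infty<\eps$, $|length_s(F_s)-length_s(H_s)|\leq \eps$, and $F_0=H_0=u$.

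Next I apply Proposition \ref{R:remark} to $F_s$ to obtain continuous functions $f_1,\ldots,f_n:[0,1]\times[0,1]\to S^1$ and unitaries $U_s(t)$ with
$$
F_s(t)=U_s(t)\,\textup{diag}[f_1(s,t),\ldots,f_n(s,t)]\,U_s(t)^*.
$$
At $s=0$, the multiset $\{f_i(0,t)\}_{i=1}^n$ coincides with $\{\alpha_j(t)\}_{j=1}^n$ for every $t$. For each $t$, the identification of these two tuples of distinct points defines a permutation $\sigma_t\in S_n$ via $f_i(0,t)=\alpha_{\sigma_t(i)}(t)$; continuity of all functions together with distinctness forces $t\mapsto\sigma_t$ to be locally constant on the connected interval $[0,1]$, hence there is a single permutation $\sigma$ with $f_i(0,\cdot)=\alpha_{\sigma(i)}(\cdot)$ for all $i$.

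Now Proposition \ref{L:distinct} yields, for each $i$, that
$$
length_s(F_s)\geq length_s(f^i_s),
$$
where $f^i_s\in U(C([0,1]))$ is the path $s\mapsto f_i(s,\cdot)$ running from $\alpha_{\sigma(i)}$ at $s=0$ to $f^i_1$ at $s=1$. Because $\|F_1-\mathbf{1}\|_\infty<\eps$, the eigenvalues of $F_1(t)$ lie within $\eps$ of $1$, so $\|f^i_1-\mathbf{1}\|_\infty<\eps$. By Lemma \ref{L:compare}, $cel(f^i_1)\leq\frac{\pi}{2}\eps$. Concatenating the path $f^i_s$ with a path from $f^i_1$ to $\mathbf{1}$ of length at most $\frac{\pi}{2}\eps$ produces a path from $\alpha_{\sigma(i)}$ to $\mathbf{1}$ of total length at most
$$
length_s(f^i_s)+\tfrac{\pi}{2}\eps \leq length_s(F_s)+\tfrac{\pi}{2}\eps \leq length_s(H_s)+\eps+\tfrac{\pi}{2}\eps \leq cel(u)+2\eps+\tfrac{\pi}{2}\eps.
$$
Hence $cel(\alpha_{\sigma(i)})\leq cel(u)+(2+\tfrac{\pi}{2})\eps$ for every $i$. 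Letting $\eps\to 0$ and noting $\sigma$ is a permutation of $\{1,\ldots,n\}$ gives $\max_{1\leq j\leq n} cel(\alpha_j)\leq cel(u)$.

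\medskip

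\noindent\textbf{Main obstacle.} The delicate point is that $\mathbf{1}$ has only one (highly) repeated eigenvalue, so the perturbed path $F_s$ cannot be forced to end exactly at $\mathbf{1}$; this forces the concatenation argument with Lemma \ref{L:compare} above, and also requires one to check that the global labelling $\sigma$ of the eigenvalue branches at $s=0$ is well defined, which uses connectedness of $[0,1]$ together with the distinct-eigenvalues hypothesis on $u$.
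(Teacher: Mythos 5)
Your proof is correct and follows essentially the same route as the paper: perturb the given path to one with everywhere-distinct eigenvalues while pinning down the non-degenerate endpoint, read off the eigenvalue branches via Proposition~\ref{R:remark}, compare lengths via Proposition~\ref{L:distinct}, and close the $\eps$-gap at the degenerate endpoint with Lemma~\ref{L:compare}. The only slip is the citation for pinning $F$ at $u$: you invoke Remark~\ref{R:pertur}, but that remark fixes the $t=0$ and $t=1$ slices $F_s(0),F_s(1)$, not the $s$-endpoint of the path. What you actually need is the ``moreover'' clause of Proposition~\ref{L:pertur}, which fixes $F_1(t)=H_1(t)$ when $H_1(t)$ has distinct eigenvalues; with your orientation $H_0=u$, $H_1=\mathbf 1$ you should either reverse the parametrization (as the paper does, taking $H_1=u$) or note explicitly that the symmetric statement at $s=0$ holds by the same transversality argument. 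This is a mislabeling rather than a gap, and the rest of the argument---including the observation that the labelling of the eigenvalue branches at the $u$-endpoint is a single fixed permutation by connectedness---is sound.
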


\begin{proof}
Arbitrarily choose a path $H_s(\cdot)$ from $u$ to ${\bf 1}$.
Applying Proposition \ref{L:pertur}, for each $\eps>0$,
there exists $F_s(\cdot)$ such that
\begin{enumerate}
\item[(1)] $\|H_s-F_s\|_\infty<\eps$, for all $s\in[0,1]$;
	\item[(2)] $|length_s(H_s)-length_s(F_s)|\leq \eps$;
	\item[(3)] $F_s(t)$ has no repeated eigenvalues
	for all $(s,t)\in[0,1]\times[0,1]$.
	\item[(4)] $\Lambda(F_1(t))=[\alpha_1(t),\alpha_2(t),\dots,\alpha_n(t)]$.
\end{enumerate}	
By Proposition \ref{R:remark}, there exist continuous functions
$\beta_1(\cdot,\cdot),\beta_2(\cdot,\cdot),\dots,\beta_n(\cdot,\cdot)$ such that
$$
\Lambda(F_s(t))=[\beta_1(s,t),\beta_2(s,t)\dots,\beta_n(s,t)].
$$
Then
\begin{align*}
\Lambda(F_1(t))&=[\alpha_1(t),\alpha_2(t),\dots,\alpha_n(t)]	\\
&=[\beta_1(1,t),\beta_2(1,t),\dots,\beta_n(1,t)],
\end{align*}
and
$$
\Lambda(F_0(t))=[\beta_1(0,t),\beta_2(0,t),\dots,\beta_n(0,t)].
$$
For each $1\leq j\leq n$, we have
$$
|\beta_j(0,t)-1|\leq\max_{1\leq j\leq n}|\beta_j(0,t)-1|\leq
\|F_s(\cdot)-H_s(\cdot)\|\leq \eps.
$$
By  Lemma  \ref{L:compare}, we have
$$
cel(\beta_j(0,\cdot))\leq \frac{\pi}{2}\eps, ~~ 1\leq j\leq n.
$$
Hence,
$$
cel(\beta_j(1,\cdot))\leq cel(\beta_j(0,\cdot))+length_{s}(\beta_j(s,\cdot)), ~~ 1\leq j\leq n.
$$
By Proposition \ref{L:distinct}, we have
$$
length_{s}(F_s)\geq\max_{1\leq j\leq n}\{length_{s}(\beta_j(s,\cdot))\}.
$$
It follows that
$$
length_{s}(F_s)\geq \max_{1\leq j\leq n}\{cel(\beta_j(1,\cdot))\}-\frac{\pi}{2}\eps
=\max_{1\leq j\leq n}cel(\alpha_j)-\frac{\pi}{2}\eps.
$$
\end{proof}

Apply the above theorem, we get the following result.

\begin{thm}\label{C:increasing}
Let $u\in M_n(C([0,1]))$ with $u(t)=exp(iH(t))$, where the eigenvalue list of $H$
$$
E(H)(t)=\{h_1(t),h_2(t),\dots,h_n(t)\}
$$ 
satisfies that 
$$
-2\pi\leq \alpha\leq h_1(t)\leq h_2(t)\leq\dots\leq h_n(t)\leq
\alpha+2\pi,
$$
for some $\alpha<0$.
Then
$$
cel(u)\geq\max_{1\leq j\leq n}cel(exp(ih_j(t)))=
\max_{1\leq j\leq n}\min_{k\in \mathbb{Z}}\max_{t\in[0,1]}|h_j(t)-2k\pi|.
$$	
\end{thm}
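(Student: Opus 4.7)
My plan is to deduce Theorem \ref{C:increasing} from Theorem \ref{T:distinct} via an approximation argument, using Corollary \ref{C:compare} to control the error introduced by perturbation. The equality between $\max_j cel(\exp(ih_j))$ and $\max_j \min_{k \in \bZ} \max_t |h_j(t) - 2k\pi|$ is an immediate instance of Proposition \ref{L:conti}, so the substantive task is the inequality $cel(u) \ge \max_j cel(\exp(ih_j))$.

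For each $\eps > 0$ I will construct a self-adjoint $H_\eps \in M_n(C([0,1]))$ with three features:
(i) $\|H_\eps - H\|_\infty < \eps$;
(ii) $H_\eps(t)$ has $n$ pairwise distinct eigenvalues for every $t \in [0,1]$, yielding continuous eigenvalue functions $h_1^\eps(t) < h_2^\eps(t) < \cdots < h_n^\eps(t)$;
(iii) all of $h_1^\eps,\ldots,h_n^\eps$ take values in a single interval of length strictly less than $2\pi$.
Property (iii) is arranged by first replacing $H$ with the scaled element $(1-\delta)H$ for small $\delta > 0$, compressing the eigenvalue window to $[(1-\delta)\alpha,(1-\delta)(\alpha + 2\pi)]$, which has length $(1-\delta)2\pi$. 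Property (ii) is then secured by adding a small constant self-adjoint matrix $K$: since the locus of self-adjoint matrices in $M_n(\bC)$ with at least one repeated eigenvalue has real codimension $3$, the set of bad $K$ (those for which $(1-\delta)H(t) + K$ has a repeated eigenvalue at some $t \in [0,1]$) has codimension at least $2$ inside $M_n(\bC)_{sa}$, so a generic $K$ with $\|K\| < \delta\pi$ simultaneously delivers (i), (ii), and (iii). Weyl's inequality for the eigenvalue list, together with the smooth dependence of simple eigenvalues on the underlying matrix, ensures that the $h_j^\eps$ are genuinely continuous in $t$ and satisfy $\|h_j^\eps - h_j\|_\infty \to 0$ as $\eps \to 0$.

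With $H_\eps$ in hand, set $u_\eps := \exp(iH_\eps)$. Property (iii) forces $|h_j^\eps(t) - h_k^\eps(t)| < 2\pi$ for $j \neq k$, so the eigenvalues $\exp(ih_j^\eps(t))$ of $u_\eps(t)$ are pairwise distinct at every $t$. Theorem \ref{T:distinct} then yields
$$
cel(u_\eps) \;\ge\; \max_{1 \le j \le n} cel\bigl(\exp(ih_j^\eps)\bigr).
$$
Since $\|u - u_\eps\| \le \|H - H_\eps\|_\infty < \eps$ and $\|\exp(ih_j) - \exp(ih_j^\eps)\|_\infty \le \|h_j - h_j^\eps\|_\infty \to 0$, Corollary \ref{C:compare} lets me pass to the limit on both sides of the displayed inequality and conclude $cel(u) \ge \max_j cel(\exp(ih_j))$. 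The main obstacle is the simultaneous realization of (ii) and (iii): the scaling step handles (iii), and the codimension-$3$ genericity argument handles (ii), but the two must be combined carefully so that the resulting $H_\eps$ stays within $\eps$ of $H$ in norm.
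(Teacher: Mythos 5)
Your proposal shares the paper's overall strategy: perturb $u$ to a nearby unitary $u_\eps$ whose eigenvalues are distinct at every $t$ and admit continuous $S^1$-valued branches, apply Theorem \ref{T:distinct} to $u_\eps$, and pass to the limit via Corollary \ref{C:compare}. The substantive difference is how $u_\eps$ is built, and there the paper takes a markedly more elementary route. The paper first invokes Thomsen's Corollary~1.3 to replace $H$ by $\operatorname{diag}[h_1,\dots,h_n]$ (legitimate over $[0,1]$ because every self-adjoint in $M_n(C[0,1])$ is unitarily equivalent to the diagonal of its sorted eigenvalue functions), and then perturbs by small \emph{distinct negative constants} $\eps_n<\cdots<\eps_1<0$. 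Because the $h_j$ were already weakly ordered, adding strictly ordered constants makes the diagonal entries strictly separated in $(\alpha,\alpha+2\pi)$; the eigenvalue functions of the perturbed diagonal matrix are then just $h_j+\eps_j$, explicitly, with no regularity or transversality input. (The paper also treats the boundary case $\alpha=\min_t h_1(t)$ separately by first truncating $g_j=\max\{h_j,\alpha+\eps\}$.)

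By contrast, your construction scales by $(1-\delta)$ and then adds a generic constant $K\in M_n(\bC)_{sa}$, relying on the codimension-3 locus $\Sigma$ of self-adjoints with repeated eigenvalues. The dimension-counting step --- that the bad set $\bigcup_{t\in[0,1]}\bigl(\Sigma-(1-\delta)H(t)\bigr)$ has positive codimension --- is where your sketch has a real gap. For a continuous but non-Lipschitz path $t\mapsto H(t)$, a $1$-parameter family of translates of a codimension-$3$ set need not have Hausdorff dimension bounded by $n^2-2$; continuity alone does not prevent the union from having full measure. The parametric transversality argument you allude to requires the path to be $C^1$ (or at least Lipschitz), and the hypotheses of the theorem give only continuity of $H$. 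The fix is either to insert a preliminary smoothing of $H$, or to quote Proposition \ref{L:pertur} (Pan--Wang's Corollary~3.5), which is precisely the packaged form of ``perturb a rectifiable path of unitaries to a piecewise-smooth one with no repeated eigenvalues,'' and already handles the regularity issue. As written, however, your genericity step is not justified, whereas the paper's diagonalize-then-shift-by-constants argument never encounters the issue. The rest of your argument --- the interval-length bookkeeping (your $\|K\|<\delta\pi$ does yield a window of length strictly below $2\pi$ by the strictness of the inequality), Weyl's inequality for $\|h_j^\eps-h_j\|_\infty$, and the limiting step via Corollary \ref{C:compare} --- is sound.
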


\begin{proof}
By Corollary 1.3 in \cite{Thomsen}, without loss of generality,
we assume that
$$
H(t)=diag[h_1(t),h_2(t),\cdots,h_n(t)].
$$

We denote $a:=\min_{t\in[0,1]}h_1(t)$.

{\bf Case 1. $\alpha<a$.} For any $0<\eps<\min\{a-\alpha,1\}$,
we choose $\eps_i$ such that
$$
-\eps<\eps_1<\eps_2<\cdots<\eps_n<0.
$$
Then we have
$$
\alpha<a-\eps<h_1(t)+\eps_1<\cdots<h_n(t)+\eps_n\leq 2\pi+\alpha.
$$
We let $g_j(t)=h_j(t)+\eps_j$, $G(t)=diag[h_1(t),\cdots,h_n(t)]$ and
$v(t)=exp(iG(t))$.
Obviously, we have
\begin{align*}
\|v(t)-u(t)\|&=\|diag[exp(ig_1(t))-exp(ih_1(t)),\cdots,exp(ig_n(t))-exp(ih_n(t))]\|\\
&=\max_{1\leq j\leq n}\{\|exp(ig_j(t))-exp(ih_j(t))\|\}\\
&=\max_{1\leq j\leq n}\{|exp(i\eps_j)-1|\}\\
&=\max_{1\leq j\leq n}\{2|\sin(\frac{\eps_j}{2})|\}\\
&\leq\max_{1\leq j\leq n}\{|\eps_j|\}<\eps<1.	
\end{align*}
By Corollary \ref{C:compare}, we have
$$
|cel(v(\cdot)-cel(u(\cdot))|<\frac{\pi}{2}\eps.
$$
Notice that
$$
|h_j(t)+\eps_j-2\pi k|\geq |h_j(t)-2\pi k|-|\eps_j|>|h_j(t)-2\pi k|-\eps,
~\text{for all}~ 1\leq j\leq n, k\in \mathbb{Z}.
$$
It follows from Theorem \ref{T:distinct} and Lemma \ref{L:conti} that
\begin{align*}
cel(v(\cdot))&\geq \max_{1\leq j\leq n} cel(exp(ig_j(t)))	\\
&=\max_{1\leq j\leq n}\min_{k\in \mathbb{Z}}\max_{t\in[0,1]}|h_j(t)+\eps_j-2\pi k|\\
&\geq \max_{1\leq j\leq n}\min_{k\in \mathbb{Z}}\max_{t\in[0,1]}|h_j(t)-2\pi k|-\eps.
\end{align*}

Hence we have
$$
cel(u(\cdot))\geq\max_{1\leq j\leq n}\min_{k\in \mathbb{Z}}\max_{t\in[0,1]}|h_j(t)-2\pi k|-\eps-\frac{\pi}{2}\eps.
$$

{\bf Case 2. $\alpha=a$.} For any $0< \eps<1$, we let
$g_j(t)=\max\{h_j(t),\alpha+\eps\}$ for $1\leq j\leq n$.
We also define $G(t)=diag[g_1(t),\cdots,g_n(t)]$ and $v(t)=exp(iG(t))$.
Then we have
$$
\alpha<\alpha+\eps\leq g_1(t)\leq \cdots\leq g_n(t)\leq 2\pi+\alpha,
~\text{for all}~~ t\in[0,1].
$$
By the proof of Case 1, we have
$$
cel(v(\cdot))\geq \max_{1\leq j\leq n}\min_{k\in \mathbb{Z}}\max_{t\in[0,1]}|g_j(t)-2\pi k|.
$$
Since $|g_j(t)-h_j(t)|<\eps<1$ for all $t\in[0,1]$,
we also have $\|v(\cdot)-u(\cdot)\|<\eps<1$.
Applying Corollary \ref{C:compare}, we have
$$
|cel(exp(i g_j(\cdot)))-cel(exp(ih_j(\cdot)))|<\frac{\pi}{2}\eps, ~\text{for all}~~ 1\leq j\leq n,
$$
and
$$
|cel(v(\cdot))-cel(u(\cdot))|<\frac{\pi}{2}\eps.
$$
This means that
$$
cel(exp(ig_j(\cdot)))>cel(exp(ih_j(\cdot)))-\frac{\pi}{2}\eps, ~\text{for all}~~1\leq j\leq n,
$$
and
$$
cel(u(\cdot))>cel(v(\cdot))-\frac{\pi}{2}\eps.
$$
Hence we have
$$
cel(u(\cdot))>\max_{1\leq j\leq n}cel(exp(ih_j(\cdot)))-\pi \eps=
\max_{1\leq j\leq n}\min_{k\in \mathbb{Z}}\max_{t\in[0,1]}|h_j(t)-2\pi k|-\pi \eps.
$$
\end{proof}

\begin{cor}\label{gen-X} Let $X$ be a path connected compact metric space.
Let $P\in M_m(C(X))$ be a projection with $rank(P)=n$ and 
$u\in PM_m(C(X))P$ be with  $u(x)=exp(iH(x))$, where the eigenvalue list
of $H$ 
$$
E(H)(x)=\{h_1(x),h_2(x),\cdots, h_n(x)\}
$$
satisfies that
$$
-2\pi\leq \alpha\leq h_1(x)\leq h_2(x)\leq\dots\leq h_n(x)\leq
\alpha+2\pi,$$
for some $\alpha<0$.
Then
$$
cel(u)\geq
\max_{1\leq j\leq n}\min_{k\in \mathbb{Z}}\max_{x\in X}|h_j(x)-2k\pi|.
$$

\end{cor}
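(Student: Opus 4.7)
The plan is to reduce Corollary~\ref{gen-X} to Theorem~\ref{C:increasing} by pulling $u$ back along carefully chosen continuous paths in $X$. For each fixed $j$, the image $h_j(X)$ is a closed interval $[a_j,b_j]\subset[\alpha,\alpha+2\pi]$ (since $X$ is path connected and $h_j$ is continuous). I would pick $x_{j,a},x_{j,b}\in X$ attaining $a_j$ and $b_j$ respectively, and a continuous path $\gamma_j:[0,1]\to X$ from $x_{j,a}$ to $x_{j,b}$. By the intermediate value theorem, $h_j\circ\gamma_j$ surjects onto $[a_j,b_j]$, so
$$
\min_{k\in\bZ}\max_{t\in[0,1]}|h_j(\gamma_j(t))-2k\pi|=\min_{k\in\bZ}\max_{x\in X}|h_j(x)-2k\pi|.
$$

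Next, evaluation along $\gamma_j$ defines a unital $*$-homomorphism $\gamma_j^*:PM_m(C(X))P\to\gamma_j^*(P)M_m(C([0,1]))\gamma_j^*(P)$ sending $f\mapsto f\circ\gamma_j$. Because $[0,1]$ is contractible, the rank-$n$ projection $\gamma_j^*(P)\in M_m(C([0,1]))$ is trivial and its corner is $*$-isomorphic to $M_n(C([0,1]))$. The image $\gamma_j^*(u)(t)=\exp(iH(\gamma_j(t)))$ has eigenvalue list $\{h_1(\gamma_j(t)),\ldots,h_n(\gamma_j(t))\}$, which is still monotonically ordered and satisfies the same bounds $\alpha$ and $\alpha+2\pi$ as required by Theorem~\ref{C:increasing}.

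Unital $*$-homomorphisms do not increase the exponential length: any factorization $u=\prod\exp(ih'_i)$ with $h'_i$ self-adjoint pulls back to $\gamma_j^*(u)=\prod\exp(i(h'_i\circ\gamma_j))$ with $\|h'_i\circ\gamma_j\|\le\|h'_i\|$, so $cel(\gamma_j^*(u))\le cel(u)$. Applying Theorem~\ref{C:increasing} to $\gamma_j^*(u)$ and restricting the outer maximum over the eigenfunctions to the single index $j$ then gives
$$
cel(u)\ge cel(\gamma_j^*(u))\ge\min_{k\in\bZ}\max_{t\in[0,1]}|h_j(\gamma_j(t))-2k\pi|=\min_{k\in\bZ}\max_{x\in X}|h_j(x)-2k\pi|.
$$
Since this holds for every $j$, taking the maximum over $j$ yields the desired inequality.

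I do not expect a serious obstacle: the only substantive point is the identification in the second step of $\gamma_j^*(P)M_m(C([0,1]))\gamma_j^*(P)$ with $M_n(C([0,1]))$, which is exactly what lets us cite Theorem~\ref{C:increasing} in its stated form; this follows immediately from triviality of projections over the contractible space $[0,1]$. The rest is bookkeeping: verifying that the sorted eigenvalue list and the two-sided bounds by $\alpha$ and $\alpha+2\pi$ persist under pullback, and that the image of the path realizes the full range $h_j(X)$.
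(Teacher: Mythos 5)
Your proposal is correct and follows essentially the same route as the paper: pull $u$ back along a path in $X$ joining a minimizer and a maximizer of $h_j$, use triviality of projections over $[0,1]$ to land in $M_n(C([0,1]))$, note that the sorted eigenvalue list and its bounds are preserved, and apply Theorem~\ref{C:increasing} together with the fact that a unital $*$-homomorphism cannot increase $cel$. The only cosmetic difference is that the paper phrases the path as an embedding $\iota:[0,1]\to X$ while you use a mere continuous path, which suffices and is in fact a cleaner choice.
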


\begin{proof} For each $1\leq j\leq n$, let $x_0\in X$ and $x_1\in X$ be the minmum  and maximum points of $\{h_j(x)\}_{x\in X}$ respectively. Choose an embedding $\iota: [0,1] \to X$ satisfying  that $\iota(0)=x_0$ and $\iota(1)=x_1$.
Then
$$\min_{k\in \mathbb{Z}}\max_{x\in X}|h_j(x)-2k\pi|=\min_{k\in \mathbb{Z}}\max_{t\in [0,1]}|h_j(\iota(t))-2k\pi|.$$
Note that $cel(u)\geq cel (\iota^*(u))$, where $\iota^*: PM_m(C(X))P \to P|_{[0,1]}M_m(C([0,1]))P|_{[0,1]}\cong M_n(C([0,1]))$ is given by $\iota^*(f)(t)=f(\iota(t))$. (Note that any projection in $M_m(C([0,1]))$ is trivial, 
so $P|_{[0,1]}M_m(C([0,1]))P|_{[0,1]}\cong M_n(C([0,1]))$.)  Applying Theorem \ref{C:increasing}, we get the corollary.
\end{proof}

We shall use the following lemma and its corollary.

\begin{lem}\label{L:subsequence}
Let $f_1,f_2, \dots, f_n$  be a set
of continuous functions from $X$ to $[0,1]$, where $X$ is a connected finite simplicial complex. Let $[c,d]\subset[0,1]$ be a non degenerated sub interval. Suppose that there exists no $1\leq j \leq n$ such that $[c,d]\subset rang(f_j)$. Let $h_k(x)$ be the $k$-th lowest value of $\{f_1(x),f_2(x), \dots, f_n(x)\}$ for each $1\leq k\leq n$ and $x\in X$. Then there exists no $1\leq k\leq n$ such that $[c,d]\subset rang(h_k)$.
\end{lem}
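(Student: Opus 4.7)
The plan is to argue by contradiction and reduce the lemma to a pigeonhole count on the index set $\{1,\dots,n\}$. First, I would exploit connectedness of $X$: since each $f_j$ is continuous on a connected space, $rang(f_j)=[a_j,b_j]$ is a (possibly degenerate) closed interval. Then the hypothesis $[c,d]\not\subset[a_j,b_j]$ is equivalent to the clean dichotomy ``$a_j>c$ or $b_j<d$'', so that the sets
$$J_1:=\{j:a_j>c\},\qquad J_2:=\{j:b_j<d\}$$
cover $\{1,\dots,n\}$. Taking complements gives $J_1^c\cap J_2^c=\emptyset$, hence $|J_1^c|+|J_2^c|\leq n$.

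Next, assume for contradiction that $[c,d]\subset rang(h_k)$ for some $k$, and choose $x_c,x_d\in X$ with $h_k(x_c)=c$ and $h_k(x_d)=d$. From the definition of $h_k$ as the $k$-th lowest value, $h_k(x_c)=c$ forces at least $k$ indices $j$ with $f_j(x_c)\leq c$; any such $j$ must have $a_j\leq c$, i.e.\ $j\in J_1^c$, yielding $|J_1^c|\geq k$. Symmetrically, $h_k(x_d)=d$ forces at least $n-k+1$ indices $j$ with $f_j(x_d)\geq d$, so $b_j\geq d$ and $j\in J_2^c$, yielding $|J_2^c|\geq n-k+1$. Adding these two inequalities gives $|J_1^c|+|J_2^c|\geq n+1$, contradicting the bound $|J_1^c|+|J_2^c|\leq n$ from the first step. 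So no such $k$ exists.

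The argument is essentially pigeonhole, so I expect no serious obstacle once the setup is in place. The main conceptual input is connectedness of $X$, which collapses $rang(f_j)$ to a single interval and enables the dichotomy ``$f_j$ stays strictly above $c$ or $f_j$ stays strictly below $d$''. Without connectedness the lemma genuinely fails: a single $f_j$ could have range $[0,c)\cup(d,1]$, skipping $[c,d]$ itself, and the counting argument would collapse. The only other small point to verify carefully is the two counting inequalities at $x_c$ and $x_d$, but these are immediate from the definition of $h_k$ as the $k$-th order statistic.
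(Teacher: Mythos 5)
Your proof is correct and is essentially the paper's argument in reverse packaging: the paper uses the counting bound $|A|+|B|\geq n+1$ together with $|A\cup B|\leq n$ to produce an index $p$ with $f_p(x)\leq c$ and $f_p(y)\geq d$, then invokes continuity/connectedness to conclude $[c,d]\subset rang(f_p)$, contradicting the hypothesis; you instead invoke connectedness first to recast the hypothesis as the disjointness $J_1^c\cap J_2^c=\emptyset$, and then let the same two cardinality bounds $|J_1^c|\geq k$, $|J_2^c|\geq n-k+1$ supply the numerical contradiction. Same pigeonhole count, same use of connectedness for the intermediate value step; just a different order of operations.
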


\begin{proof}
If there exists some $1\leq k\leq n$ such that $[c,d]\subset rang(h_k)$,
we can choose $x,y\in X$ such that $h_k(x)=c$ and $h_k(y)=d$. Let
$A=\{j: f_j(x)\leq c\}$, $B=\{i: f_i(y)\geq d\}$. Since $h_k(x)=c$,
we have $|A|\geq k$. Similarly, from $h_k(y)=d$, we have $|B|\geq n-k+1$.
But $|A\cup B|\leq n$. There exists a $p\in A\cap B$. That is,
$f_p(x)\leq c$ and $f_p(y)\geq d$. Since $f_p$ is continuous, we have
$[c,d]\subset rang(f_p)$, a contradiction.
\end{proof}

\begin{cor}\label{june-17}(a) Let $\phi: PM_n(C(X))P \to QM_m(C(Y))Q$ be a unital homomorphism,and let $a\in PM_n(C(X))P $ be a self-adjoint element such that $E(a)=(h_1, h_2, \cdots, h_{rank(P)})$ and $E(\phi(a))=(f_1, f_2,\cdots, f_{rank(Q)})$ with $h_i:X\to \mathbb{R}$, and $f_k: Y\to [0, 1]$ being continuous functions. And let $[c,d]\subset \mathbb{R}$ be an interval. Then if there is a $k$ such that $[c,d]\subset rang(f_k)$, then there is an $i$, such that $[c,d]\subset rang(h_i)$. Consequently, $EV(\phi(a))\leq EV(a)$.\\

(b) Let $p_1, p_2\in PM_n(C(X))P$ be two orthogonal projections and $a_1\in p_1M_n(C(X))p_1,~~  a_2\in p_2M_n(C(X))p_2 $ be two self-adjoint elements. Then $EV(a_1+a_2)\leq max\{EV(a_1), EV(a_2)\}$.

\end{cor}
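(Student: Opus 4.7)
The plan is to derive both parts of the corollary from Lemma \ref{L:subsequence}, with Remark \ref{june-15-1}(b) serving as the bridge in part (a) between the unordered spectrum map of $\phi$ and a continuous family of real-valued eigenfunctions of $\phi(a)$ whose ranges are controlled by those of the $h_i$. The only genuinely subtle point is this setup: the map $\phi^*: Y \to P^sX$ (with $s := rank(Q)/rank(P)$) must be promoted to honest continuous $\mathbb{R}$-valued functions $g_{i,j}$ on $Y$ satisfying $rang(g_{i,j}) \subset rang(h_i)$. This is exactly what the last part of Remark \ref{june-15-1}(b) supplies; once granted, both parts are essentially immediate.

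For part (a), Proposition \ref{P:singleblock} and Remark \ref{june-15-1}(b) give continuous functions $g_{i,j}: Y \to \mathbb{R}$ indexed by $1 \le i \le rank(P)$ and $1 \le j \le s$, with $rang(g_{i,j}) \subset rang(h_i)$, such that for every $y \in Y$ the multiset $\{g_{i,j}(y)\}_{i,j}$ coincides with the multiset of eigenvalues of $\phi(a)(y)$. Hence $(f_1,\ldots,f_{rank(Q)})$ is the pointwise sorted reordering of $\{g_{i,j}\}$. If $[c,d] \subset rang(f_k)$ for some $k$, then Lemma \ref{L:subsequence} applied to the family $\{g_{i,j}\}$ (re-indexed as a single sequence of $rank(Q)$ continuous functions on $Y$) yields indices $(i_0,j_0)$ with $[c,d] \subset rang(g_{i_0,j_0}) \subset rang(h_{i_0})$, which establishes the first assertion.

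For the \emph{consequently} clause, since $Y$ is path-connected each $rang(f_k)$ equals the closed interval $[\min_Y f_k, \max_Y f_k]$. Feeding this interval into the claim just proved produces an index $i$ with $\max_Y f_k - \min_Y f_k \le \max_X h_i - \min_X h_i \le EV(a)$. Taking the max over $k$ gives $EV(\phi(a)) \le EV(a)$.

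For part (b), work inside the corner algebra $(p_1+p_2)M_n(C(X))(p_1+p_2)$. The orthogonality $p_1 \perp p_2$ implies that the eigenvalue multiset of $(a_1+a_2)(x)$ is the disjoint union of those of $a_1(x)$ and $a_2(x)$, so if $E(a_s) = (h^{(s)}_1,\ldots,h^{(s)}_{r_s})$ with $r_s = rank(p_s)$, then $E(a_1+a_2) = (g_1,\ldots,g_{r_1+r_2})$ is the pointwise sorted merge of the combined family $\{h^{(s)}_i\}$. Lemma \ref{L:subsequence} applied verbatim to this combined family shows that each $rang(g_k)$ is contained in some $rang(h^{(s)}_i)$, so $\max_X g_k - \min_X g_k \le EV(a_s) \le \max\{EV(a_1), EV(a_2)\}$; maximizing over $k$ concludes the proof.
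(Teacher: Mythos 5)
Your proof is correct and follows the paper's approach exactly: the paper's own (terse) proof of this corollary invokes Remark~\ref{june-15-1}(b) to obtain the family $g_{i,j}$ with $rang(g_{i,j})\subset rang(h_i)$ and then cites Lemma~\ref{L:subsequence} for both parts, which is precisely what you do, though you spell out the steps the paper leaves implicit (feeding the closed interval $rang(f_k)$ back into the lemma to get the ``consequently'' clause, and the disjoint-union-of-spectra observation in part~(b)).
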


\begin{proof}Part(a): By \ref{june-15-1} (b), there are continuous functions $\{g_{i,j}: ~~ 1\leq i\leq rank(P), 1\leq j\leq rank(Q)/rank(P)\}$, with $g_{i,j}:Y\to \mathbb{R}$, such that for each $y\in Y$,
as elements in $P^{rank(Q)}\mathbb{R}$, $[f_1, f_2,\cdots, f_{rank(Q)}]=[ g_{i,j}:~~ 1\leq i\leq rank(P), 1\leq j\leq rank(Q)/rank(P)  ]$ and such that $rang(g_{i,j})\subset rang(h_i)$. Then part (a) follows from Lemma \ref{L:subsequence}.\\

Part (b) also follows from Lemma \ref{L:subsequence}.
\end{proof}

\begin{lem}\label{R:realrank1}
Let $A=\lim_{n\rightarrow \infty}(A_n,\phi_{n,n+1})$ be an
$AH$ algebra. Suppose the condition \textup{(2)} of Proposition \textup{\ref{L:variation}} does not hold for the inductive limit system (in the case of slow dimension growth or no dimension growth, this is equivalent to the condition that $A$ is not of real rank zero). There exists an interval $[c,d]\subset[0,1]$,
a positive integer $n$ and $x\in (A_n)_+$ with $\|x\|=1$ such that
for each $m\geq n$, $\phi_{n,m}(x)$ admits the following representation
$$
\phi_{n,m}(x)=\{y_k^{m}\}_{k=1}^{k_m}\in A_m=\oplus_{k=1}^{k_m}P_{m,k}M_{[m,k]}(C(X_{m,k}))P_{m,k},\eqno{(2.1)}
$$
there exist $1\leq k(m)\leq k_m$, $1\leq i(m)\leq [m,k(m)]$ such that
$$
[c,d]\subset rang(h_{i(m)}^{k(m)}),$$
where $h_i^{k(m)}(t)$ is the $i$-th lowest eigenvalue of $y^m_{k(m)}(t)$ for $1\leq i\leq [m,k(m)]$.
\end{lem}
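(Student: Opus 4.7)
The plan is to use the failure of condition (2) to produce a single positive element $x\in (A_n)_+$ sitting in one direct summand of $A_n$, together with a cofinal set $S$ of stages where some partial image has a sorted eigenvalue function whose range covers a fixed subinterval $[c,d]\subset [0,1]$, and then to propagate this property from $S$ to every intermediate stage $m\ge n$ using Lemma \ref{L:subsequence} and Corollary \ref{june-17}(a). The main obstacle is this final propagation step, since sorted (ordered-by-value) eigenvalue functions do not merge nicely under the orthogonal direct sum corresponding to blocks of $A_m$ feeding into a single block of $A_{m_*}$; Lemma \ref{L:subsequence} is tailor-made to handle exactly this difficulty.

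To extract $x$: by the negation of Proposition \ref{L:variation}(2), there exist $a\in (A_n)_+$ with $\|a\|=1$ and $\varepsilon>0$ such that for every $m\ge n$ some partial map satisfies $EV(\phi_{n,m}^{i,j}(a_i))\ge \varepsilon$, where $a_i$ is the $i$-th component of $a$. Since $A_n$ has only finitely many summands, pigeonhole selects a single source block $i^*$ which is bad for infinitely many $m$; the value $\|a_{i^*}\|$ is positive (otherwise the image would vanish and $EV$ would be zero), so setting $x:=a_{i^*}/\|a_{i^*}\|$ gives a normalised positive element of $A_n$ concentrated in the single summand $i^*$ with $EV(\phi_{n,m}^{i^*,j(m)}(x))\ge \varepsilon$ for infinitely many $m$. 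Since the bad sorted eigenvalue function is continuous on the connected space $X_{m,j(m)}$, its range is an interval of length $\ge\varepsilon$ inside $[0,1]$.

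Covering $[0,1]$ by finitely many closed intervals of length $\varepsilon/3$ and applying a second infinite pigeonhole, one obtains a fixed subinterval $[c,d]$ and an infinite cofinal set $S\subset\{n,n+1,\dots\}$ such that for every $m\in S$ some block $k(m)$ of $A_m$ and some sorted eigenvalue index $i(m)$ satisfy $[c,d]\subset rang(h_{i(m)}^{k(m)})$. This establishes the lemma for $m\in S$.

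Finally, fix any $m\ge n$ and choose $m_*\in S$ with $m_*\ge m$. Writing
\[
\phi_{n,m_*}(x)_{k(m_*)}\;=\;\sum_k \phi_{m,m_*}^{k,k(m_*)}\!\bigl(\phi_{n,m}(x)_k\bigr),
\]
a sum of elements in mutually orthogonal corners of $A_{m_*}^{k(m_*)}$, the sorted eigenvalue list of the left side is the sorting of the merged sorted eigenvalue lists of the summands (together with harmless zero paddings). Since the left side has a sorted eigenvalue covering $[c,d]$ and $[c,d]$ is nondegenerate, the contrapositive of Lemma \ref{L:subsequence} produces some index $k_0$ for which $\phi_{m,m_*}^{k_0,k(m_*)}(\phi_{n,m}(x)_{k_0})$ already has a sorted eigenvalue covering $[c,d]$. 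Viewing $\phi_{m,m_*}^{k_0,k(m_*)}$ as a unital homomorphism into its image corner (discarding the constant-$0$ eigenvalues of Remark \ref{eigen-june-16}(2)) and applying Corollary \ref{june-17}(a), one concludes that $\phi_{n,m}(x)_{k_0}$ itself has a sorted eigenvalue covering $[c,d]$; the required block at stage $m$ is thus $k_0$.
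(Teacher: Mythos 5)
Your proposal is correct and follows essentially the same path as the paper's own argument: negate Proposition \ref{L:variation}(2) to get a norm-one positive element whose partial images have eigenvalue variation at least $\varepsilon$ at every stage, apply pigeonhole to pin down a fixed subinterval $[c,d]$ and a cofinal set of stages where some sorted eigenvalue function covers $[c,d]$, and then propagate back to every stage $m\ge n$ by running Lemma~\ref{L:subsequence} (equivalently Corollary~\ref{june-17}) in contrapositive across the decomposition $\phi_{n,m_*}(x)_{k(m_*)}=\sum_k\phi_{m,m_*}^{k,k(m_*)}(\phi_{n,m}(x)_k)$. Two cosmetic differences: the paper pins down $[c,d]$ via lattice points $a_p=p/N$ together with the ``middle half'' interval $\overline{I}$, where you instead cover $[0,1]$ by closed intervals of length $\varepsilon/3$ --- both work --- and you make explicit a step the paper leaves tacit, namely cutting $a$ down to a single source summand $a_{i^*}$ (and normalizing) before pigeonholing, so that the ``bad'' eigenvalue variation of a partial map $\phi_{n,m}^{i^*,j(m)}$ really is the eigenvalue variation of the block component $y^m_{j(m)}$ of $\phi_{n,m}(x)$; this is a worthwhile clarification, since merging orthogonal summands can in general only decrease eigenvalue variation.
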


\begin{proof}
Applying Lemma \ref{L:variation}, there exist $\eps>0$, a positive integer $n$
and $x\in (A_n)_+$ with $\|x\|=1$ such that for each $m\geq n$,
$\phi_{n,m}(x)$ admits representation (2.1), there exists
$1\leq k(m)\leq k_m$, $1\leq i(m)\leq [m,k(m)]$, $t_{i(m)},s_{i(m)}\in X_{m,k}$ such that
$$
|h_{i(m)}^{k(m)}(t_{i(m)})-h_{i(m)}^{k(m)}(s_{i(m)})|\geq \eps,
$$
where $h_i^{k(m)}(t)$ is the $i$-th lowest eigenvalue of $y^m_{k(m)}(t)$ for $1\leq i\leq [m,k(m)]$.  For $m\geq n$, we denote by $I^{k(m)}_{i(m)}$ the
closed interval with end points $h_{i(m)}^{k(m)}(s_{i(m)})$ and $h_{i(m)}^{k(m)}(t_{i(m)})$. We also denote by $\overline{I}^{k(m)}_{i(m)}$ the closed
interval with the same middle point as $I^{k(m)}_{i(m)}$ so that
$$
|\overline{I}^{k(m)}_{i(m)}|=\frac{1}{2}|I^{k(m)}_{i(m)}|\geq\frac{\eps}{2}.
$$

Choose a positive integer $N$ such that $\frac{2}{N}<\eps$.
We denote $a_p=\frac{p}{N}$ for $0\leq p\leq N$.
Since $|\overline{I}^{k(m)}_{i(m)}|\geq \frac{1}{2}\eps$
and $\overline{I}^{k(m)}_{i(m)}\subset[0,1]$ for all $m\geq n$,
there exist a $0\leq p\leq N$ and a subsequence $m_j$ such that
$$
a_p\in \overline{I}^{k(m_j)}_{i(m_j)}, ~\text{for all}~ j\geq1.
$$
Denote $I=[a_p-\frac{\eps}{4},a_p]$ and $J=[a_p,a_p+\frac{\eps}{4}]$, then $I\subset I_{i(m_j)}^{k(m_j)}$ or $J\subset I_{i(m_j)}^{k(m_j)}$
for each $j\geq1$. Without loss of generality, we assume that
$I\subset I_{i(m_j)}^{k(m_j)}$ for each $j\geq1$. Otherwise, we shall
choose a subsequence of $\{m_j\}_{j=1}^\infty$.

We have proved that the conclusion holds for $m_j$ for each $j\geq1$.
For $m\geq n$, there exists $j\geq1$ such that $m_{j-1}<m\leq m_{j}(m_0=n)$.
We consider
$$
\phi_{m,m_j}^{l,k(m_j)}:P_{m,l}M_{[m,l]}(C(X_{m,l}))P_{m,l}\rightarrow
P_{m_j,k(m_j)}M_{[m_j,k(m_j)]}(C(X_{m_j,k(m_j)}))P_{m_j,k(m_j)},
$$
the homomorphism which is composition of the restriction of $\phi_{m,m_j}$
on the $l$-th block of $A_m$ and the quotient map from $A_{m_j}$ to
the $k(m_j)$-th block of $A_{m_j}$.

We claim that there exist $1\leq k(m) \leq k_m$ and $1\leq i(m)\leq [m,k(m)]$ such that
$$
I\subset rang(h_{i(m)}^{k(m)}),
$$
where $h_i^{k(m)}(t)$ is the $i$-th lowest eigenvalue of $y^m_{k(m)}(t)$.
Otherwise, for each $1\leq k\leq k_m$ and $1\leq i\leq [m,k]$,
$rang(h_i^k)$ does not contain the interval $I$. By Corollary \ref{june-17},
we conclude that there exists no $1\leq k\leq k_{m_j}$
and $1\leq i\leq [m_j,k]$ such that
$$
I\subset rang(g_{i}^{k})
$$
where $g_i^{k}(t)$ is the $i$-th lowest eigenvalue of $y^{m_j}_{k}(t)$. A contradiction.
\end{proof}

Let $\pi_j: A_m\to A_m^j$ be the projection map to the $j$-th block. In the proof of the following theorem and the rest of the paper, let us denote $\pi_j\circ\phi_{n,m}$ by $\phi_{n,m}^{-,j}$ which is the homomorphism from $A_n$ to $A_m^j$.

\begin{thm}\label{T:Main}
Let A be a unital $AH$ algebra of slow dimension growth condition which is not of real rank zero.
Then
$$
cel_{CU}(A)\geq2\pi.
$$
\end{thm}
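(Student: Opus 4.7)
The strategy is to combine Lemma~\ref{R:realrank1} and Corollary~\ref{gen-X}: the former supplies $x \in (A_n)_+$ with $\|x\|=1$ and $[c,d] \subset [0,1]$ such that for every $m \geq n$, some block $A_m^{k(m)}$ carries an eigenvalue function $h_{i(m)}^{k(m)}$ of $\phi_{n,m}^{-,k(m)}(x)$ whose range contains $[c,d]$; the latter translates such eigenvalue coverage into a lower bound on $cel$. From $x$ I will manufacture a unitary $u \in CU(A)$ via functional calculus and a matrix-doubling trick, designed so that in each of these blocks the exponent of $u$ has spectrum in $[-2\pi,0]$ with one eigenvalue function covering $[-2\pi,0]$. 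Corollary~\ref{gen-X} will then give $cel \geq 2\pi$ in the fibre, and an approximation/projection argument will transfer the bound back to $cel_A(u)$.

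Concretely, fix continuous $g \colon [0,1] \to [-2\pi,0]$ monotone on $[c,d]$ with $g(c)=0$ and $g(d)=-2\pi$. In $M_2(A_n)$ set
\[
\tilde u := \mathrm{diag}\bigl(\exp(ig(x)),\ \exp(-ig(x))\bigr) = \exp(i\tilde H), \quad \tilde H := \mathrm{diag}\bigl(g(x),\ -g(x)-2\pi\cdot 1_{A_n}\bigr).
\]
A direct computation gives $\tilde u = VWV^{-1}W^{-1}$ with $V = \mathrm{diag}(\exp(ig(x)),1_{A_n})$ and the flip $W = \bigl(\begin{smallmatrix}0 & 1_{A_n}\\ 1_{A_n} & 0\end{smallmatrix}\bigr) = \exp(i\pi p)$ for the projection $p = \tfrac{1}{2}\bigl(\begin{smallmatrix}1 & -1\\ -1 & 1\end{smallmatrix}\bigr)\otimes 1_{A_n}$; both $V$ and $W$ lie in $U_0(M_2(A_n))$, so $\tilde u \in CU(M_2(A_n))$. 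Because $g \in [-2\pi,0]$ forces $-g-2\pi \in [-2\pi,0]$ as well, the spectrum of $\tilde H$ lies in $[-2\pi,0]$; moreover, in the fibre $M_2(A_m^{k(m)})$ the eigenvalue function $g \circ h_{i(m)}^{k(m)}$ is still present (coming from the upper-left factor) and covers $g([c,d]) = [-2\pi,0]$. Corollary~\ref{gen-X} with $\alpha = -2\pi$ therefore gives
\[
cel_{M_2(A_m^{k(m)})}\bigl(\text{image of } \tilde u \bigr) \ \geq\ \min_{k \in \mathbb{Z}}\max_{t}\bigl|g(h_{i(m)}^{k(m)}(t)) - 2k\pi\bigr| \ =\ 2\pi.
\]

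Let $u := \phi_{n,\infty}(\tilde u) \in CU(M_2(A))$ (under the $M_2$-amplification of $\phi_{n,\infty}$). For any path from $u$ to $1$ in $M_2(A)$ of length $L$ and any $\delta > 0$, density of $\bigcup_m M_2(A_m)$ in $M_2(A)$ combined with the endpoint-adjustment Lemma~\ref{L:compare} produces a path in some $M_2(A_m)$ from the image of $\tilde u$ to $1$ of length $\leq L + \delta$. Projecting via the $*$-homomorphism quotient $M_2(A_m) \twoheadrightarrow M_2(A_m^{k(m)})$ (a contraction, hence length-non-increasing) yields a path in $M_2(A_m^{k(m)})$ of length $\leq L + \delta$, which by the fibre bound is $\geq 2\pi$; taking $\delta \to 0$ gives $cel_{M_2(A)}(u) \geq 2\pi$. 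The remaining step—upgrading this to $cel_{CU}(A) \geq 2\pi$—is the main obstacle. One option is to invoke matrix stability $cel_{CU}(M_2(A)) = cel_{CU}(A)$; more intrinsically, since slow dimension growth forces $\mathrm{rank}(P_{N,j}) \to \infty$, for $N$ large enough one obtains a partial connecting map $\phi_{N',N}^{-,j,j'}$ of multiplicity $\geq 2$ splitting as $\psi_1 \oplus \psi_2$ with $\psi_1 \cong \psi_2$, and a partial isometry intertwining $\psi_1, \psi_2$ realizes $M_2(\psi_1(P_{N',j'}) A_N^j \psi_1(P_{N',j'}))$ as a corner of $A_N^j$, so that the commutator construction lives genuinely inside $A_N \subset A$ and the preceding approximation argument goes through verbatim.
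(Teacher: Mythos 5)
Your construction has a genuine quantitative gap: the $2\times 2$ commutator $\tilde u = \mathrm{diag}\bigl(\exp(ig(x)),\exp(-ig(x))\bigr)$ only yields a lower bound of $\pi$, not $2\pi$. The issue is in the application of Corollary~\ref{gen-X}: that corollary is stated in terms of the \emph{ordered} eigenvalue list $h_1(x)\leq\cdots\leq h_n(x)$, and pointwise sorting destroys range coverage. Concretely, in the fibre $M_2(A_m^{k(m)})$ the two branches arising from $\tilde H = \mathrm{diag}\bigl(g(x),\,-g(x)-2\pi\bigr)$ are $f := g\circ h_{i(m)}^{k(m)}$ and $-f - 2\pi$, which are pointwise reflections of each other about $-\pi$. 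After sorting, $\min(f,-f-2\pi) = -|f+\pi| - \pi$ has range $[-2\pi,-\pi]$ and $\max(f,-f-2\pi) = |f+\pi|-\pi$ has range $[-\pi,0]$, so no single sorted eigenvalue function covers $[-2\pi,0]$; each covers an interval of length only $\pi$, and Corollary~\ref{gen-X} then gives $\min_k\max_t|h_j(t)-2k\pi| = \pi$ for each such $j$. This is not an artifact of the specific $g$: any unitary in $CU(M_2(\cdot))$ has $\det=1$, so its two eigenvalue branches always sum to a constant in $2\pi\mathbb{Z}$ and are mirror images; confinement to a window of length $2\pi$ then forces each sorted branch to cover at most $\pi$. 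No amount of careful endpoint bookkeeping can get $M_2$ past $\pi$.

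The paper sidesteps exactly this by working in $M_L$ for $L$ large rather than $M_2$. It uses one block carrying $\chi_2\circ\chi(x)$ with $\chi_2:[0,1]\to[-1+\tfrac1L,0]$ (decreasing) and $L-1$ blocks carrying $\chi_1\circ\chi(x)$ with $\chi_1:[0,1]\to[0,\tfrac1L]$ (increasing). Because $\chi_2\circ\chi\leq 0\leq\chi_1\circ\chi$ pointwise, all "negative'' eigenvalue functions sit below all "positive'' ones, so sorting does not mix them: the sorted list still contains $\chi_2\circ\chi\circ h_{i(m)}^{k(m)}$ intact, with range $[-1+\tfrac1L,0]$, yielding $\min_p\max_y 2\pi|\cdot - p| = 2\pi(1-\tfrac1L)$, and $L\to\infty$ gives $2\pi$. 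Making $L$ arbitrarily large is precisely where slow dimension growth (via $L[p]<1_{A_n}$ for a full projection $p$) is used. The $\det=1$ constraint that forces the pairing in your $M_2$ construction is relaxed when you have $L$ eigenvalues to distribute: one large-range branch of size $\approx 1$ can be offset by $L-1$ tiny ones of size $\approx 1/L$ each. Your Lemma~\ref{R:realrank1} set-up, the length-non-increase under block quotients, and the approximation back from $A$ to a finite stage are all correct and coincide with the paper; it is only the core commutator construction and the resulting lower bound that need to be replaced by the $L$-block version.
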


\begin{proof}
Since $A$ is not real rank zero, and each $A_k$ is unital, there exists $k_0$
such that for all $k\geq k_0$, $\phi_{k,\infty}(1_{A_k})A\phi_{k,\infty}(1_{A_k})$ is not real rank zero.
For any $\eps>0$, choose an integer $L$ such that $\frac{2\pi}{L}<\eps$,
since $A$ has slow dimension growth condition, there exist $n$ and a full projection
$p\in A_n$ such that
$$
L[p]<1_{A_n}<m[p],
$$
for some $m$. Also, we can assume that $\phi_{n,\infty}(1_{A_n})A\phi_{n,\infty}(1_{A_n})$ is not
real rank zero. Then $\phi_{n,\infty}(p) A\phi_{n,\infty}(p)$ is stably isomorphic to $A$ and hence \\
$\phi_{n,\infty}(p) A\phi_{n,\infty}(p)=\lim(\phi_{n,m}(p)A_m \phi_{n,m}(p), \phi_{m,m'})$ (by abusing the notation, we still use the  $\phi_{m,m'}$ to denote the restriction of the map to the conner subalgebra $\phi_{n,m}(p)A_m \phi_{n,m}(p)$)  is not of real rank zero. By Lemma \ref{R:realrank1},
there exist an interval $[c,d]\subset[0,1]$, an integer $n_1\geq n$ and a positive element $x\in (\phi_{n,n_1}(p)A_{n_1}\phi_{n,n_1}(p))_+$
with $\|x\|=1$ such that for every $m\geq n_1$, $\widetilde{\phi}_{n_1,m}(x)$ has the following representation
%
$$
\widetilde{\phi}_{n_1,m}(x)=(y_1^m,y_2^m,\dots,y_{k_m}^m)\in \bigoplus_{i=1}^{k_m}\phi_{n,m}^{-,i}(p) A_m^i\phi_{n,m}^{-,i}(p),
$$
where $\widetilde{\phi}_{n_1,m}=\phi_{n_1,m}|_{\phi_{n,n_1}(p)A_{n_1}\phi_{n,n_1}(p)}$.
There exist $1\leq k(m)\leq k_m$, $1\leq i(m)\leq [m,k(m)]$ such that
$$
[c,d]\subset rang(h_{i(m)}^{k(m)}),
$$
 where $h_i^{k(m)}(t)$ is the $i$-th lowest eigenvalue of $y_{k(m)}^m(t)$ for $1\leq i\leq [m,k(m)]$.

Since $p$ is a full projection in $A_n$ and $L[p]<1_{A_n}$, there exists a set of mutually orthogonal rank one
 projections $p_1,p_2\dots, p_L\in A_n$ such that $p_i\sim p_j\sim p$ and $\sum_{i=1}^Lp_i<{\bf 1}_{A_n}$.
We let $q=\sum_{i=1}^Lp_i$. It is easy to see that $qA_nq$ and $M_L(pA_np)$ are isomorphic.
This means that  $M_L(pA_np)\subset A_n$ and hence $M_L(\phi_{n,n_1}(p)A_{n_1}\phi_{n,n_1}(p))\subset A_{n_1}$.

We define a continuous function on $[0,1]$ as follows:
\begin{equation*}
\chi(t)=
\begin{cases}
0,\quad\quad\quad\quad\quad\quad\quad    t\in[0,c]\\
\frac{1}{(d-c)}(x-c),\quad\quad\quad t\in[c,d]\\
1,\quad\quad\quad\quad\quad\quad\quad   t\in[d,1]
\end{cases}.\eqno(2.2)
\end{equation*}
Further, we define two continuous functions $\chi_1:[0,1]\rightarrow [0,\frac{1}{L}]$
and $\chi_2:[0,1]\rightarrow [-1+\frac{1}{L}, 0]$ as follows
$$
\chi_1(t)=\frac{1}{L} t,
$$
and
$$
\chi_2(t)=(-1+\frac{1}{L})t.
$$
We let
$$
h=
\begin{bmatrix}
e^{2\pi i\chi_2\circ\chi(x)}&0&\cdots&0\\
0&e^{2\pi i\chi_1\circ\chi(x)}&\cdots&0\\
\vdots&\vdots&\ddots&\vdots\\
0&0&0&e^{2\pi i\chi_1\circ\chi(x)}
\end{bmatrix}_{L\times L}\in  \phi_{n,n_1}(q)A_{n_1}\phi_{n,n_1}(q),
$$
where $\chi(x)\in  \phi_{n,n_1}(p)A_{n_1}\phi_{n,n_1}(p)$ and $\chi_i\circ\chi(x)\in  \phi_{n,n_1}(p)A_{n_1}\phi_{n,n_1}(p)$ are functional calculus of self-adjoint element $x$, also we identify $\phi_{n,n_1}(q)A_{n_1}\phi_{n,n_1}(q)\cong M_L( \phi_{n,n_1}(p)A_{n_1}\phi_{n,n_1}(p))$.

Let $u=h\oplus({\bf 1}_{A_{n_1}}-\phi_{n,n_1}(q))$. It is easy to check that $det(u(z))=1$ for all $z\in Sp(A_{n_1})$,
furthermore we have $u\in U_0(A_{n_1})$. It follows from \cite{Phillips96} that
$u\in CU(A_{n_1})$.

We shall show that $cel(\phi_{n_1,m}(u))\geq 2\pi(1-\eps)$, for all $m\geq n_1$.
For a fixed $m\geq n_1$, we have
 $
 \phi_{n_1,m}(h)=exp(2\pi iH)$,
 where
 $$
 H=
 \begin{bmatrix}
\chi_2\circ\chi(\widetilde{\phi}_{n_1,m}(x))&0&\cdots&0\\
0&\chi_1\circ\chi(\widetilde{\phi}_{n_1,m}(x))&\cdots&0\\
\vdots&\vdots&\ddots&\vdots\\
0&0&0&\chi_1\circ\chi(\widetilde{\phi}_{n_1,m}(x))
\end{bmatrix}_{L\times L},
 $$
and $\widetilde{\phi}_{n_1,m}=\phi|_{\phi_{n,n_1}(p)A_{n_1}\phi_{n,n_1}(p)}$.
It follows that $\widetilde{\phi}_{n_1,m}(x)\in \phi_{n,m}(p)A_{m}\phi_{n,m}(p)$
and hence $H\in M_L(\phi_{n,m}(p)A_{m}\phi_{n,m}(p))=\phi_{n,m}(q)A_{m}\phi_{n,m}(q)\subset A_m$.

Note that $\widetilde{\phi}_{n_1,m}(x)=(y_1^m,y_2^m,\cdots,y_{k_m}^m)$
with each $y_j^m\in \phi_{n,m}^{-,j}(p)A_m^j\phi_{n,m}^{-,j}(p)$.
There exists $1\leq k(m)\leq k_m$,
$1\leq i(m)\leq [m,k(m)]$ such that
$$
[c,d]\subset rang(h_{i(m)}^{k(m)}),
$$
 where $h_i^{k(m)}(t)$ is the $i$-th lowest eigenvalue of $y_{k(m)}^m(t)$ for $1\leq i\leq [m,k(m)]$.
 we have
 $$
 \chi_2\circ\chi(\widetilde{\phi}_{n_1,m}(x))=(\chi_2\circ\chi(y_1^m),\chi_2\circ\chi(y_2^m),\cdots,\chi_2\circ\chi(y_{k_m}^m))
\in \bigoplus_{i=1}^{k_m}\phi_{n,m}^{-,i}(p) A_m^i\phi_{n,m}^{-,i}(p)$$
 and
  $$
 \chi_1\circ\chi(\widetilde{\phi}_{n_1,m}(x))=(\chi_1\circ\chi(y_1^m),
 \chi_1\circ\chi(y_2^m),\cdots,\chi_1\circ\chi(y_{k_m}^m))
 \in \bigoplus_{i=1}^{k_m}\phi_{n,m}^{-,i}(p) A_m^i\phi_{n,m}^{-,i}(p).$$

Write $H=(H_1, H_2, \cdots, H_{k_m})\in \bigoplus_{i=1}^{k_m}\phi_{n,m}^{-,i}(q) A_m^i\phi_{n,m}^{-,i}(q).$ It follows that
\begin{align*}
 H_i&=\begin{bmatrix}
\chi_2\circ\chi(y_i^m)&0&\cdots&0\\
0&\chi_1\circ\chi(y_i^m)]&\cdots&0\\
\vdots&\vdots&\ddots&\vdots\\
0&0&0&\chi_1\circ\chi(y_i^m)
\end{bmatrix}_{L\times L}.\\
\end{align*}



This means that
$$
\phi_{n_1,m}(h)=exp(2\pi i H)=(exp(2\pi iH_1),exp(2\pi iH_2),\dots,exp(2\pi iH_{k_m})),
$$
and hence
$$
cel(\phi_{n_1,m}(h))\geq cel(exp(2\pi i H_k(x))),~~\mbox{for all}~~ 1\leq k\leq k_m.
$$
In particular, we have $cel(\phi_{n_1,m}(h))\geq cel(exp(2\pi iH_{k(m)}))$,
and\\ $cel(\phi_{n_1,m}(u))\geq cel(exp(2\pi i H_{k_m})\oplus ({\bf 1}_{A_{m}^{k(m)}}-\phi_{n,m}^{-,k(m)}(q)))$.
Furthermore, the eigenvalue list of $H_{k(m)}$ satisfies that
\begin{align*}
-1+\frac{1}{L}&\leq\chi_2\circ\chi\circ h_{[m,k]}^{k(m)}\leq \chi_2\circ\chi\circ h_{[m,k]-1}^{k(m)}\leq \cdots\leq \chi_2\circ\chi\circ h_1^{k(m)}\\
&\leq
\underbrace{\chi_1\circ\chi\circ h_1^{k(m)}\leq\chi_1\circ\chi\circ h_1^{k(m)}\leq \cdots\leq\chi_1\circ\chi\circ h_1^{k(m)}}_{L-1}\\
&\leq
\underbrace{\chi_1\circ\chi\circ h_2^{k(m)}\leq\chi_1\circ\chi\circ h_2^{k(m)}\leq \cdots\leq\chi_1\circ\chi\circ h_2^{k(m)}}_{L-1}\\
&\leq\cdots\\
&\leq
\underbrace{\chi_1\circ\chi\circ h_{[m,k]}^{k(m)}\leq\chi_1\circ\chi\circ h_{[m,k]}^{k(m)}\leq \cdots\leq\chi_1\circ\chi\circ h_{[m,k]}^{k(m)}}_{L-1}\leq \frac{1}{L}.
\end{align*}
That is, $\phi_{n_1,m}^{-,k(m)}(h)=exp(2\pi i H_{k(m)})$ satisfies the condition of Corollary \ref{gen-X}. Applying (3) of Remark \ref{eigen-june-16}, we know that
$\phi_{n_1,m}^{-,k(m)}(u)=exp(2\pi i H_{k(m)})\oplus ({\bf 1}_{A_{m}^{k(m)}}-\phi_{n,m}^{-,k(m)}(q)))$ also satisfies the condition of Corollary \ref{gen-X}. By the corollary, we have
$$
\phi_{n_1,m}^{-,k(m)}(u)\geq \min_{p\in \mathbb{Z}}\max_{y\in X_{m,k(m)}}2\pi|\chi_2\circ\chi\circ h_{i(m)}^{k(m)}(y)-p|.
$$
Noting that $[c,d]\subset rang(h_{i(m)}^{k(m)})\subset[0,1]$, by the definitions of $\chi$
and $\chi_2$, we have
$$
rang(\chi_2\circ\chi\circ h_{i(m)}^{k(m)})=[-1+\frac{1}{L},0]
$$
and hence
$$
\min_{p\in \mathbb{Z}}\max_{y\in X_{m,k(m)}}2\pi|\chi_2\circ\chi\circ h_{i(m)}^{k(m)}(y)-p|=(1-\frac{1}{L})2\pi\geq 2\pi-\eps.
$$
\end{proof}

\begin{rem}\label{july-23-2018}Evidently, our proof also works for the case of no dimension growth provided that $\lim_{n\to \infty} \min_i\{rank({\bf 1}_{A_n^i})\}=\infty$. ,	
\end{rem}

For all $k\geq 1$ and $u\in CU(M_k(C([0,1])))$, Lin \cite{lin} prove that
$cel(u)\leq 2\pi$, in fact, we have the following proposition.
This proposition also tell us, we can not replace the slow dimension growth
condition by Gong's slow dimension growth condition, which does not imply that $\lim_{n\to \infty} \min_i\{rank({\bf 1}_{A_n^i})\}=\infty$..

\begin{prop}\label{P:finite}
$cel_{CU}(M_k(C([0,1])))=\frac{k-1}{k}2\pi$.
\end{prop}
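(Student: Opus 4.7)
The statement has lower and upper bounds to establish.

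For the lower bound, I specialize the construction from the proof of Theorem \ref{T:Main} to the single algebra $M_k(C([0,1]))$ by taking $L=k$, $p$ a rank one projection (so the associated $q$ equals $1_A$), $x(t)=t$, and $[c,d]=[0,1]$ in (2.2) so that $\chi$ reduces to the identity on $[0,1]$. The resulting unitary
\[
u(t)=\operatorname{diag}\Bigl(e^{-2\pi i(k-1)t/k},\; e^{2\pi it/k},\ldots,e^{2\pi it/k}\Bigr)
\]
has $\det u\equiv 1$ and hence lies in $CU(M_k(C([0,1])))$ by the \cite{Phillips96} argument used in Theorem \ref{T:Main}. Its diagonal self-adjoint logarithm $H(t)=\operatorname{diag}(-2\pi(k-1)t/k,\;2\pi t/k,\ldots,2\pi t/k)$ has all eigenvalues in the length-$2\pi$ interval $[-2\pi(k-1)/k,\;2\pi/k]$, so Corollary \ref{gen-X} applies directly and yields $cel(u)\geq 2\pi(k-1)/k$.

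For the upper bound, fix $u\in CU(M_k(C([0,1])))$; since $CU$ lies in the kernel of the de la Harpe--Skandalis determinant, $\det u(t)\equiv 1$ as a scalar function on $[0,1]$. My goal is to exhibit, for each $\varepsilon>0$, a continuous self-adjoint $H\in M_k(C([0,1]))$ with $\|H\|_\infty\leq 2\pi(k-1)/k$ and $\|e^{iH}-u\|_\infty<\varepsilon$; the straight-line path $s\mapsto e^{isH}$ has length $\|H\|_\infty$, so Corollary \ref{C:compare} then gives $cel(u)\leq 2\pi(k-1)/k+\tfrac{\pi}{2}\varepsilon$, and letting $\varepsilon\to 0$ completes the estimate. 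To construct $H$, I first perturb $u$ to a unitary $u'$ with $\det u'\equiv 1$, $\|u-u'\|<\varepsilon$, and $u'(t)$ having $k$ distinct eigenvalues for every $t$; this is done by multiplying $u$ on the right by $e^{i\delta K(\cdot)}$ for a generic trace-zero self-adjoint $K$ breaking the degeneracies (a transversality argument in the spirit of Proposition \ref{L:pertur}), with a final scalar correction $e^{i\theta(t)}{\bf 1}$ by small $\theta$ to preserve $\det = 1$. Distinctness lets me select continuous eigenvalue functions $\lambda_j(t)$ and a continuous diagonalizing unitary $V(t)$. Composing $u'$ with the continuous adjoint quotient $SU(k)\to\mathcal A$ into the Weyl alcove
\[
\mathcal A=\Bigl\{(\alpha_1,\ldots,\alpha_k)\in\mathbb{R}^k:\; \alpha_1\geq\cdots\geq\alpha_k,\; \alpha_1-\alpha_k\leq 2\pi,\; \textstyle\sum_j\alpha_j=0\Bigr\}
\]
yields continuous functions $\tilde\alpha_j(t)$ with $\max_j|\tilde\alpha_j(t)|\leq 2\pi(k-1)/k$ (the bound is attained only at the vertex $(2\pi(k-1)/k,\;-2\pi/k,\ldots,-2\pi/k)$ and its permutations). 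Since the eigenvalues of $u'(t)$ are everywhere distinct, the permutation matching $(\lambda_j(t))$ to $(e^{i\tilde\alpha_j(t)})$ is locally, and hence globally, constant on $[0,1]$; after relabeling, $H(t)=V(t)\operatorname{diag}(\tilde\alpha_j(t))V(t)^*$ is continuous, satisfies $e^{iH(t)}=u'(t)$, and obeys $\|H\|_\infty\leq 2\pi(k-1)/k$.

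The main obstacle is the continuous selection of the diagonalizer $V(t)$, which is precisely why the preliminary perturbation to everywhere-distinct eigenvalues is essential: at a point of eigenvalue coalescence the adjoint quotient has positive-dimensional fiber and there is no local continuous diagonalizer. Arranging the perturbation simultaneously to preserve $\det = 1$ and to separate all $k$ eigenvalues uniformly in $t$ is the most delicate bookkeeping step, handled by the two-step generic-plus-scalar procedure described above. Once distinctness has been secured, the sharp constant $2\pi(k-1)/k$ emerges automatically from the geometry of the Weyl alcove of $SU(k)$.
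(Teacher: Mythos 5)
Your proposal is correct and follows essentially the same strategy as the paper: perturb $u$ to a nearby determinant-one unitary with everywhere-distinct eigenvalues, extract continuous real logarithms $\alpha_j$ of the eigenvalues normalized so that $\sum_j\alpha_j=0$, deduce $\max_j|\alpha_j|\le 2\pi(k-1)/k$, and exponentiate a straight-line path; for the lower bound, both you and the paper produce the extremal commutator unitary from the Pan--Wang construction and invoke the lower-bound machinery (Corollary \ref{gen-X} in your version). Your Weyl-alcove framing of $SU(k)$ and explicit transversality perturbation are just geometric repackagings of the paper's appeal to Lin's Lemma 4.2 and its elementary inequality chain, so the content is the same.
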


\begin{proof} From the construction in \cite{PanWang}, we know that $cel_{CU}(M_k(C([0,1])))\geq \frac{k-1}{k}2\pi$.
the following proof of $cel_{CU}(M_k(C([0,1])))\leq \frac{k-1}{k}2\pi$ is inspired by Section 3 of \cite{GLN1}(see also the proof of
Lemma 4.2 in \cite{lin}). Let $u\in CU(M_k(C([0,1])))$ and $\eps>0$,
using the proof of Lemma 4.2 in \cite{lin}, we can find $v\in CU(M_k(C([0,1])))$
which satisfies the following conditions.
\begin{enumerate}
\item[(1)] $v(t)=\sum_{j=1}^kexp(2\pi i h_j(t))p_j(t)$,
where $h_j(t)\in C([0,1])_{s.a}$ and $\{p_1,p_2,\dots, p_k\}$
is a set of mutually orthogonal rank one projections,
\item[(2)] $\sum_{j=1}^kh_j(t)=0$ for all $t\in[0,1]$, this ensures
that $det(v(t))=0$ for all $t\in[0,1]$ and hence $v\in CU(M_k(C([0,1])))$,
\item[(3)] $h_j(t)- h_l(t)\notin \mathbb{Z}$ for any $t\in[0,1]$ when $j\neq l$,
this ensures that $v(t)$ has distinct eigenvalues, further,
$0<\max_{1\leq j\leq k}h_j(t)-\min_{1\leq j\leq k}h_j(t)<1$
for all $t\in [0,1]$.
\item[(4)] $|h_j(t)|<1$ for all $t\in[0,1]$ and $1\leq j\leq k$.
\item[(5)] $\|u-v\|<\eps$.
\end{enumerate}
We shall show that
$$
\|h_j\|<\frac{k-1}{k},
~\text{for all}~ 1\leq j\leq k.
$$
Since $\{h_j\}_{j=1}^k$ satisfy condition (3),
without loss of generality, we can assume that
$$
h_1(t)>h_2(t)>\dots>h_k(t), h_1(t)-h_k(t)<1, ~\text{for all}~ t\in[0,1].
$$
For fixed $1\leq k_0\leq k$ and $t\in[0,1]$, we have
\begin{align*}
0&=h_1(t)+h_2(t)+\dots+h_{k_0}(t)+\dots+h_k(t)\\
&>k_0h_{k_0}(t)+(k-k_0)h_k(t)\\
&>k_0h_{k_0}(t)+(k-k_0)(h_1(t)-1)\\
&>k_0h_{k_0}(t)+(k-k_0)(h_{k_0}(t)-1)\\
&=kh_{k_0}(t)-k+k_0,	
\end{align*}
hence
$$
h_{k_0}(t)<\frac{k-k_0}{k}.
$$
On the other hand, we have
\begin{align*}
0&=h_1(t)+h_2(t)+\dots+h_{k_0}(t)+\dots+h_k(t)\\
&<(k_0-1)h_1(t)+(k-k_0+1)h_{k_0}(t)\\
&<(k_0-1)(1+h_k(t))+(k-k_0+1)h_{k_0}(t)\\
&<(k_0-1)(1+h_{k_0}(t))+(k-k_0+1)h_{k_0}(t)\\
&<kh_{k_0}(t)+k_0-1,	
\end{align*}
hence
$$
h_{k_0}(t)>-\frac{k_0-1}{k}.
$$
It follows that
$$
\|h_{k_0}\|<\frac{k-1}{k}.
$$
We let
$$
v_s(t)=\sum_{j=1}^kexp(2\pi ish_j(t))p_j(t), ~\text{for all}~ s\in[0,1], t\in[0,1].
$$
Then $v_s$ is a path with $v_0(t)=v(t)$ and $v_1(t)={\bf 1}$.
Further, we have
\begin{align*}
length_s(v_s)&=\int_{0}^1\|\frac{dv_s}{ds}\|ds\\
&=\int_0^1\|\sum_{j=1}^k2\pi ih_j(t)exp(2\pi ish_j(t))p_j(t)\|ds\\
&=2\pi\int_0^1\max_{1\leq j\leq k}\|h_j\|ds\\
&<2\pi\frac{k-1}{k}.
\end{align*}
By (4) and Corollary \ref{C:compare}, it follows that
$$
cel(u)\leq cel(v)+\frac{\pi}{2}\eps\leq length_s(v_s)+\frac{\pi}{2}\eps<2\pi\frac{k-1}{k}+\frac{\pi}{2}\eps.
$$
As $\eps$ goes to zero, we have $cel(u)\leq 2\pi\frac{k-1}{k}$.
\end{proof}


\section{exponential length in $AH$ algebras with ideal property}

\begin{defn}
We say a $C^*$-algebra $A$ has the ideal property, if each closed two
sided ideal of $A$ is generated by the projections inside the ideal.
\end{defn}

Evidently, all simple $AH$ algebras and all real rank zero $C^*$-algebras
have ideal property. In this part, we shall show that $cel_{CU}(A)\leq2\pi$
for each $AH$ algebras with ideal property which is of no dimension growth.

As in \cite{ElliottGong}, we denote by $T_{II,k}$ the 2-dimensional connected
simplicial complex with $H^1(T_{II,k})=0$ and $H^2(T_{II,k})=\mathbb{Z}/{k\mathbb{Z}}$,
and we denote by $I_k$ the subalgebra of $M_k(C([0,1]))=C([0,1],M_k(\mathbb{C}))$ consisting of functions
$f$ with property $f(0)\in \mathbb{C}1_k$ and $f(1)\in \mathbb{C}1_k$. $I_k$
is called Elliott dimension drop interval algebra. As in \cite{Gongjiangli}, we denote by $\mathcal{HD}$
the class of algebras of direct sums of building blocks of forms $M_l(I_k)$ and
$PM_n(C(X))P$, with $X$ being one of the spaces $\{pt\}$, $[0,1]$, $S^1$ and $T_{II,k}$, and with $P\in M_n(C(X))$ being a projection. We will call a $C^*$-algebra an $\mathcal{AHD}$ algebra, if it is an inductive limit of algebras in $\mathcal{HD}$.
In \cite{GJLP2,GJLP1}, it is proved that all $AH$ algebras with ideal property of
no dimension growth are $\mathcal{AHD}$ algebras.


\begin{lem}[\cite{PanWang}, Corollary 3.2]\label{L:codimension}
Let $Z=\{u\in U(M_n(\mathbb{C})):~~\textup{$u$ has repeated eigenvalues}\}$.
Then $Z$ is the union of finitely many submanifolds of $U(M_n(\mathbb{C}))$,
all of codimension at least three.
\end{lem}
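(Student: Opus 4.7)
The plan is to stratify $Z$ by the multiplicity type of eigenvalues. For each partition $\lambda = (\lambda_1 \geq \lambda_2 \geq \cdots \geq \lambda_k)$ of $n$ with at least one $\lambda_i \geq 2$, let $Z_\lambda \subset U(n)$ consist of those unitaries whose (unordered) multiset of eigenvalue multiplicities is exactly $\{\lambda_1,\ldots,\lambda_k\}$. Since there are only finitely many partitions of $n$, the set $Z$ is a finite disjoint union of the $Z_\lambda$, so it suffices to show that each $Z_\lambda$ is a submanifold of $U(n)$ of codimension at least three.

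To see the manifold structure, I would use the conjugation action of $U(n)$ on itself. Fix a model $u_0 = \mathrm{diag}(\mu_1 I_{\lambda_1},\ldots,\mu_k I_{\lambda_k})$ with $\mu_1,\ldots,\mu_k$ distinct points of $S^1$. Its stabilizer under conjugation is the block-diagonal subgroup $U(\lambda_1)\times\cdots\times U(\lambda_k)$, so the $U(n)$-orbit through $u_0$ is a smooth homogeneous space of real dimension $n^2 - \sum_i \lambda_i^2$. Letting $(\mu_1,\ldots,\mu_k)$ vary over the configuration space of $k$ distinct points on $S^1$ (quotiented further by the subgroup of $S_k$ permuting equal-multiplicity blocks), these orbits fit together as a smooth fiber bundle whose total space is exactly $Z_\lambda$; its real dimension is $n^2 - \sum_i \lambda_i^2 + k$.

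The codimension of $Z_\lambda$ in $U(n)$ is therefore
$$\left(n^2\right) - \left(n^2 - \sum_i \lambda_i^2 + k\right) = \sum_i \lambda_i^2 - k = \sum_i (\lambda_i^2 - 1).$$
A part $\lambda_i = 1$ contributes $0$, while any $\lambda_i \geq 2$ contributes at least $3$; by hypothesis some $\lambda_i \geq 2$, so the codimension is at least $3$, as claimed.

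The main technical point is rigorously establishing the fiber bundle structure, i.e., verifying that the map $U(n)\times \mathrm{Conf}_k(S^1) \to U(n)$ defined by $(v,\boldsymbol{\mu}) \mapsto v\,\mathrm{diag}(\mu_1 I_{\lambda_1},\ldots,\mu_k I_{\lambda_k})\,v^{*}$ descends to an injective immersion from the appropriate quotient onto $Z_\lambda$. This is standard from the theory of orbit types of compact Lie group actions, and the genuine content of the lemma is the dimension count above.
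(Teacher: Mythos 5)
Your proof is correct. Note, however, that the paper itself does not prove this lemma—it is stated with the citation [PanWang, Corollary~3.2] and used as a black box—so there is no in-paper argument to compare yours against. What you give is a complete and standard derivation: you stratify $Z$ by the multiset of eigenvalue multiplicities (i.e.\ by orbit type of the conjugation action of $U(n)$ on itself), identify each stratum $Z_\lambda$ as the total space of the bundle $U(n)\times_{U(\lambda_1)\times\cdots\times U(\lambda_k)}\mathrm{Conf}\,$, and compute
\[
\operatorname{codim} Z_\lambda \;=\; \sum_i \lambda_i^2 - k \;=\; \sum_i (\lambda_i^2 - 1)\;\ge\;3
\]
whenever some $\lambda_i\ge 2$. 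The dimension count checks out on the extremal strata (e.g.\ $\lambda=(2,1,\dots,1)$ gives exactly $3$; $\lambda=(n)$ gives the scalar matrices $S^1\cdot I_n$ of codimension $n^2-1$). The only point you rightly flag as requiring care—that each orbit-type stratum is a genuine embedded submanifold and that the parametrizing map is an injective immersion—is standard from the slice theorem / orbit-type decomposition for compact group actions, so no gap remains.
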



Noting that $\dim(T_{II,k})=2$,
by Lemma \ref{L:codimension} and a standard transversal argument, we can get the following result.

\begin{lem}\label{L:transversal}
Let $u\in U(PM_n(C(T_{II,k}))P)$, where $P$ is a projection in $M_n(C(T_{II,k}))$. For any $\eps>0$,
there exists $v\in U(PM_n(C(T_{II,k}))P)$ such that
\begin{enumerate}
\item[(1)] $\|u-v\|\leq\eps$;
\item[(2)] and $Sp(v(y))=\{\beta_1(y),\beta_2(y),\ldots\beta_k(y)\}$, where
$k=rank(P)$ and $\beta_i(y)\neq\beta_j(y)$ for all $i\neq j$ and $y\in T_{II,k}$.
\end{enumerate}
\end{lem}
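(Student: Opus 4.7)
The plan is to exploit the dimension gap: $\dim T_{II,k}=2$ while the bad set $Z$ of Lemma \ref{L:codimension} has codimension at least $3$ in $U(M_k(\bC))$. By Thom's transversality theorem, any continuous map from a $2$--dimensional simplicial complex into $U(M_k(\bC))$ admits arbitrarily $C^0$--small perturbations whose image misses $Z$, hence consists of matrices with pairwise distinct eigenvalues. The only complication is that $P$ is a non-trivial projection, so this transversality argument must be carried out in local charts where $P$ trivializes and then glued.

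First I would cover $T_{II,k}$ by finitely many open sets $U_1,\dots,U_N$ on each of which $P$ is trivial, together with a closed refinement $\{F_1,\dots,F_N\}$ with $F_i\subset U_i$. On each $U_i$ choose a continuous unitary $W_i\in M_n(C(U_i))$ conjugating $P|_{U_i}$ to a constant rank--$k$ projection; this yields an identification $\psi_i: PM_n(C(U_i))P\cong M_k(C(U_i))$ under which $u|_{U_i}$ becomes a continuous map $U_i\to U(M_k(\bC))$.

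Next I would construct $v$ as the last term of a sequence $v_0=u,v_1,\dots,v_N=v$ in $U(PM_n(C(T_{II,k}))P)$ satisfying $\|v_i-v_{i-1}\|<\eps/N$ and such that $v_i(y)$ has pairwise distinct eigenvalues for every $y\in F_1\cup\cdots\cup F_i$. To pass from $v_{i-1}$ to $v_i$, view $\psi_i(v_{i-1}|_{U_i})$ as a continuous map $U_i\to U(M_k(\bC))$, approximate it in sup norm by a smooth map, and apply Thom transversality: since each stratum of $Z$ has codimension at least $3$ and $\dim U_i\leq 2$, a generic $C^0$--small smooth perturbation has image disjoint from $Z$, hence has pairwise distinct eigenvalues everywhere on $U_i$. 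Then interpolate back to $v_{i-1}$ outside $F_i$ via a Urysohn function $\rho_i:T_{II,k}\to[0,1]$ with $\rho_i|_{F_i}=1$ and $\mathrm{supp}(\rho_i)\subset U_i$, connecting the two unitaries along a short path in $U(M_k(\bC))$; this assembles to a global $v_i\in U(PM_n(C(T_{II,k}))P)$.

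The main obstacle will be the inductive bookkeeping: the perturbation at step $i$ must be small enough not to destroy the pairwise distinctness already established on $F_1\cup\cdots\cup F_{i-1}$, and the Urysohn interpolation must not reintroduce repeated eigenvalues on the intermediate region $U_i\setminus F_i$. The first issue is handled by a compactness argument that yields a positive lower bound $\delta_i>0$ on the minimum eigenvalue gap of $v_{i-1}$ over $F_1\cup\cdots\cup F_{i-1}$, against which the step--$i$ perturbation is chosen strictly smaller than $\min(\eps/N,\delta_i/2)$ in sup norm. The second follows from the Lipschitz dependence of eigenvalues on the unitary along a short path: a sufficiently small interpolation keeps the spectrum in a neighborhood on which distinctness persists. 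After $N$ steps the resulting $v$ satisfies both (1) and (2).
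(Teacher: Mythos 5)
Your proposal is correct and carries out exactly the argument the paper gestures at: the paper's entire proof consists of the remark that $\dim T_{II,k}=2$, Lemma~\ref{L:codimension} puts the bad set $Z$ at codimension at least $3$, and ``a standard transversal argument'' finishes it. Your write-up supplies the details that the paper leaves implicit — locally trivializing $P$ (which can be a nontrivial bundle over $T_{II,k}$ since $H^2(T_{II,k})=\mathbb{Z}/k\mathbb{Z}$), doing a chart-by-chart Thom transversality perturbation, gluing with a Urysohn partition, and protecting the already-achieved eigenvalue gaps on $F_1\cup\cdots\cup F_{i-1}$ via a lower bound $\delta_i$ on the gap and the Lipschitz dependence of the spectrum of a normal matrix on the matrix. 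This is essentially the same route, just spelled out.
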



\vspace{0.2in}

Recall that   $F^k S^1=Hom(C(S^1), M_k(\bC))_1$  and $\Pi: F^k S^1\to P^k S^1$ be as  Defined in  \ref{D:equiv}. Let $\mbox{\r{F}}^k S^1$ be the set of homomorphism with distinct spectrum and  $\mbox{\r{P}}^k S^1 =\Pi( \mbox{\r{F}}^k S^1 )$.

\begin{lem}\label{L:pi_1}
$\pi_1(\mbox{\r{P}}^k S^1)=\bZ$ is torsion free.

\end{lem}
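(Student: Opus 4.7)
The plan is to realize $\mbox{\r{P}}^k S^1$ as the unordered configuration space of $k$ distinct points on $S^1$ and compute its fundamental group via a natural $k$-sheeted regular covering. An unordered $k$-tuple of distinct points on $S^1$ is specified by choosing one distinguished point $z \in S^1$ together with the angular gap vector $(d_1, \ldots, d_k)$ to consecutive neighbors counterclockwise, with $d_j > 0$ and $\sum d_j = 2\pi$. Since any of the $k$ points can play the role of the distinguished one, this $k$-to-$1$ description gives a homeomorphism $\mbox{\r{P}}^k S^1 \cong (S^1 \times \mathring{\Delta}^{k-1})/C_k$, where $\mathring{\Delta}^{k-1} = \{d : d_j > 0,\, \sum d_j = 2\pi\}$ and $C_k := \mathbb{Z}/k\mathbb{Z}$ acts by
$$(z, d_1, d_2, \ldots, d_k) \longmapsto (z e^{id_1}, d_2, d_3, \ldots, d_k, d_1).$$
A direct check shows this action is free: if $\sigma^j$ fixes $(z, d)$ with $0 < j < k$, then $d$ is periodic of period $\gcd(j,k)$, forcing $d_1 + \cdots + d_j = (j/k)(2\pi)$, which lies in $2\pi\mathbb{Z}$ only when $k \mid j$.

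Hence the quotient map $q : S^1 \times \mathring{\Delta}^{k-1} \to \mbox{\r{P}}^k S^1$ is a regular $k$-sheeted covering. The total space deformation retracts onto the $S^1$ factor, so $\pi_1(S^1 \times \mathring{\Delta}^{k-1}) \cong \mathbb{Z}$, generated by the loop $\gamma(t) = (e^{2\pi i t}, \text{centroid})$. Standard covering space theory then yields the short exact sequence
$$1 \longrightarrow \mathbb{Z} \xrightarrow{q_\ast} \pi_1(\mbox{\r{P}}^k S^1) \longrightarrow C_k \longrightarrow 1.$$

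To pin down this extension I will exhibit an explicit $k$-th root of $q_\ast\gamma$, namely the ``rotation by $2\pi/k$'' loop through the $k$-th roots of unity,
$$\ell(t) = \{e^{2\pi i (t + j/k)} : j = 0, 1, \ldots, k-1\}, \qquad t \in [0, 1/k].$$
This is a loop because the set of $k$-th roots of unity is invariant under multiplication by $e^{2\pi i / k}$. Its lift starting at $(1, \text{centroid})$ is $t \mapsto (e^{2\pi i t}, \text{centroid})$, ending at the deck translate $g\cdot (1, \text{centroid}) = (e^{2\pi i/k}, \text{centroid})$; hence $\ell$ maps to a generator of $C_k$ in the above sequence. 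Concatenating $k$ copies of $\ell$ lifts to the full loop $\gamma$, so $\ell^k = q_\ast \gamma$ in $\pi_1(\mbox{\r{P}}^k S^1)$. Consequently $\ell$ has infinite order, and every element of $\pi_1(\mbox{\r{P}}^k S^1)$ is of the form $\ell^j \cdot q_\ast(\gamma^m) = \ell^{j + km}$; thus $\pi_1(\mbox{\r{P}}^k S^1) = \langle \ell \rangle \cong \mathbb{Z}$, which is torsion-free.

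The main technical step is verifying that the $C_k$-action on $S^1 \times \mathring{\Delta}^{k-1}$ is free (the combinatorial argument above). Once the covering is in place, everything reduces to identifying the extension class, and the explicit loop $\ell$ does this in one stroke by exhibiting $q_\ast\gamma$ as a $k$-th power rather than a primitive element.
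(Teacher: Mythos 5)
Your proof is correct, and it takes a genuinely different route from the paper's. The paper identifies $\mbox{\r{F}}^kS^1$ with the complement of a codimension-three subset of $U(k)$ (so $\pi_1(\mbox{\r{F}}^kS^1)\cong\pi_1(U(k))\cong\bZ$), then feeds the fibration $\Pi:\mbox{\r{F}}^kS^1\to\mbox{\r{P}}^kS^1$ with simply connected flag-manifold fibre into the homotopy long exact sequence. You instead treat $\mbox{\r{P}}^kS^1$ directly as the unordered configuration space of $k$ distinct points on the circle and realize it as the orbit space of a free $C_k$-action on $S^1\times\mathring{\Delta}^{k-1}$, so the quotient map is a regular $k$-sheeted covering and the answer comes from the deck-transformation short exact sequence, with the extension pinned down by the explicit rotation loop $\ell$ whose $k$-th power is the central $\bZ$-generator. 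Your argument is more elementary (pure covering space theory, no transversality and no appeal to the topology of flag manifolds) and yields more information, namely an explicit generator of the fundamental group; the paper's argument is shorter but leans on the codimension estimate already established in Lemma \ref{L:codimension} and on standard homogeneous-space topology. One small remark: your freeness verification can be shortened, since already the first coordinate forces $d_1+\cdots+d_j\in 2\pi\bZ$, which is impossible for $0<j<k$ because $0<d_1+\cdots+d_j<\sum d_i=2\pi$ — the periodicity-of-$d$ detour is not needed.
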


\begin{proof} Note that $F^k S^1$ is homeomorphic to $U_k(\bC)=U(k)$, and $\mbox{\r{F}}^k S^1$ corresponds to the set of all unitaries  $u\in U(k)$ with distinct spectrum, which is a union of finitely many sub-manifolds of  $U(k)$ of codimensions at least three.  Hence $\pi_1(\mbox{\r{F}}^k S^1)=\pi_1(U(k))=\bZ$.

Consider the fibration map $\Pi|_{\mbox{\r{F}}^k S^1}: \mbox{\r{F}}^k S^1 \to \mbox{\r{P}}^k S^1$, whose fibre is the simply connected flag manifold $U(k)/\underbrace{U(1)\times U(1)\times\cdots U(1)}_k$, we get the desired result.
\end{proof}


%

\begin{lem}\label{L:lifting}
Let $F:T_{II,k}\rightarrow P^kS^1$ be a continuous function. Suppose
$$
F(t)=[x_1(t),x_2(t),\ldots,x_k(t)],
$$
and for all $t\in T_{II,k}$, $x_i(t)\neq x_j(t)$ for $i\neq j$.
Then there are continuous functions $f_1,f_2,\ldots f_k: T_{II,k}\rightarrow S^1$
such that
$$
F(t)=[f_1(t),f_2(t),\ldots, f_k(t)].
$$
\end{lem}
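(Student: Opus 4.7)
The plan is to view this as a covering-space lifting problem. The symmetric-product quotient map restricts to a covering
\[
p \colon (S^1)^k_{\mathrm{dist}} \longrightarrow \mbox{\r{P}}^k S^1,
\]
where $(S^1)^k_{\mathrm{dist}}$ is the set of ordered $k$-tuples of pairwise distinct points; it is a $k!$-fold cover since the $S_k$-action on $(S^1)^k_{\mathrm{dist}}$ by coordinate permutations is free. The hypothesis on $F$ says precisely that $F(T_{II,k}) \subseteq \mbox{\r{P}}^k S^1$, and a continuous lift $\tilde F \colon T_{II,k} \to (S^1)^k_{\mathrm{dist}}$ unpacks into exactly the coordinate functions $f_1,\ldots,f_k$ the lemma asks for.

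Fix basepoints $t_0 \in T_{II,k}$ and $\tilde y_0 \in p^{-1}(F(t_0))$. The classical covering-space lifting criterion says that $F$ lifts into the connected component of $(S^1)^k_{\mathrm{dist}}$ through $\tilde y_0$ if and only if $F_*(\pi_1(T_{II,k},t_0)) \subseteq p_*(\pi_1((S^1)^k_{\mathrm{dist}},\tilde y_0))$ as subgroups of $\pi_1(\mbox{\r{P}}^k S^1, F(t_0))$. By Lemma \ref{L:pi_1}, $\pi_1(\mbox{\r{P}}^k S^1) \cong \bZ$ is torsion-free; on the other hand, $T_{II,k}$ has a standard CW model with one $0$-cell, one $1$-cell giving a circle, and one $2$-cell attached by a degree-$k$ map (this is the model matching $H^1 = 0$ and $H^2 = \bZ/k$), so van Kampen yields $\pi_1(T_{II,k}) \cong \bZ/k$, a finite torsion group. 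Any group homomorphism from a torsion group into a torsion-free group is the zero map, hence $F_* = 0$, and the lifting criterion is satisfied vacuously---independently of which component of $(S^1)^k_{\mathrm{dist}}$ contains $\tilde y_0$ and of how the subgroup $p_*(\pi_1((S^1)^k_{\mathrm{dist}},\tilde y_0))$ sits inside $\bZ$.

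The resulting lift $\tilde F = (f_1,\ldots,f_k)$ takes values in $(S^1)^k_{\mathrm{dist}}$ and satisfies $p\circ \tilde F = F$, which is exactly the conclusion $F(t) = [f_1(t),f_2(t),\ldots,f_k(t)]$ with the $f_i(t)$ pairwise distinct. The conceptual heart is supplied entirely by Lemma \ref{L:pi_1}---the non-trivial topological input is that $\pi_1$ of the open stratum of the symmetric product is torsion-free, after which the argument is forced by the finiteness of $\pi_1(T_{II,k})$. I do not expect any serious obstacle; the only minor points of care are that $T_{II,k}$ is a finite CW complex (hence path-connected and semi-locally simply connected, so the lifting criterion applies) and that $p$ is genuinely a covering (which follows from freeness of the $S_k$-action off the diagonals).
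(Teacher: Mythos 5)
Your argument is correct and is exactly the paper's proof: both reduce the claim to the covering-space lifting criterion for $p\colon \pi^{-1}(\mbox{\r{P}}^k S^1)\to \mbox{\r{P}}^k S^1$, and both invoke Lemma~\ref{L:pi_1} together with the fact that $\pi_1(T_{II,k})\cong\bZ/k\bZ$ is torsion so that $F_*$ is forced to vanish. You simply spell out the details (freeness of the $S_k$-action, the CW model of $T_{II,k}$, care about the connected component of the lift's starting point) that the paper leaves implicit when it cites Hatcher Proposition~1.33.
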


\begin{proof}  Note that the restriction of the map  $\pi: (S^1)^k \to P^kS^1$ on $\pi^{-1}(\mbox{\r{P}}^k S^1)$ is a covering map and $\pi_1(T_{II,k})=\bZ/k\bZ$ is a torsion group. The lemma follows from Lemma \ref{L:pi_1} and the lifting lemma Proposition 1.33 in \cite{Hatcher}.
\end{proof}

\begin{thm}\label{T:T2k}
Let $u\in CU(PM_n(C(T_{II,k}))P)$, then for any $\eps>0$,
there exists a self-adjoint element $h\in PM_n(C(T_{II,k}))P$
with $\|h\|\leq 1$ such that $\|u-exp(2\pi ih)\|<\eps$.
In particular, $cel(u)\leq2\pi$.
\end{thm}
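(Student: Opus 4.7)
Proof plan:

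The proof adapts the strategy of Proposition \ref{P:finite} from $M_k(C([0,1]))$ to the setting of $PM_n(C(T_{II,k}))P$, with the new topological input coming from Lemmas \ref{L:transversal}, \ref{L:lifting}, and \ref{L:pi_1}. I begin by applying Lemma \ref{L:transversal} (together with Remark \ref{R:pertur} and a standard transversality argument using that $\dim T_{II,k}=2$ and the codimension-$3$ estimate in Lemma \ref{L:codimension}) to approximate $u$ by a unitary $v$ with $\|u-v\|<\eps/2$ whose spectrum is distinct at every point of $T_{II,k}$. Since $u\in CU$ forces $\det u=1$ as a continuous $S^1$-valued function (well defined via local trivializations of $P$), $\det v$ is uniformly close to $1$; using that $H^1(T_{II,k};\mathbb{Z})=0$ I write $\det v=exp(2\pi i\varphi)$ for a continuous $\varphi$ of small norm, and replace $v$ by $exp(-2\pi i\varphi/k)\cdot\mathbf{1}_P\cdot v$, obtaining a new $v$ with $\det v=1$ exactly, still within $\eps/2$ of $u$, still with distinct spectrum.

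By Lemma \ref{L:lifting}, the spectrum of $v$ is carried by continuous $f_1,\dots,f_k:T_{II,k}\to S^1$. The decisive topological step is to lift the spectrum map $Sp(v):T_{II,k}\to\mbox{\r{P}}^k S^1$ through the universal cover of $\mbox{\r{P}}^k S^1$. Since $\pi_1(T_{II,k})=\mathbb{Z}/k\mathbb{Z}$ is torsion while $\pi_1(\mbox{\r{P}}^k S^1)=\mathbb{Z}$ is torsion free by Lemma \ref{L:pi_1}, the induced homomorphism on $\pi_1$ is zero and the lift exists. A convenient model for this universal cover is the convex open set $\{(x_1,\dots,x_k)\in\mathbb{R}^k: x_1<x_2<\cdots<x_k,\ x_k-x_1<1\}$, on which the $\mathbb{Z}$-deck action is generated by the cyclic shift $\sigma(x_1,\dots,x_k)=(x_2,\dots,x_k,x_1+1)$. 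The lift yields continuous functions $g_1<g_2<\cdots<g_k$ on $T_{II,k}$ with $g_k-g_1<1$ pointwise and $\{exp(2\pi ig_j(y))\}=Sp(v(y))$. A direct computation shows that each application of $\sigma$ increases $\sum_jg_j$ by $1$.

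Because $\det v=1$, the continuous function $\sum_jg_j$ is $\mathbb{Z}$-valued, hence equals a constant $N\in\mathbb{Z}$ by connectedness of $T_{II,k}$. Applying $\sigma^{-N}$ I arrange $\sum_jg_j=0$ while retaining the strict ordering and the span bound $g_k-g_1<1$. At each $y$, the real numbers $g_1(y)<\cdots<g_k(y)$ thus lie in an interval of length less than $1$ and sum to $0$; the algebraic inequalities from the proof of Proposition \ref{P:finite}, applied pointwise, then yield $|g_j(y)|<(k-1)/k<1$ for every $j$.

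Finally, let $p_j$ be the continuous rank-one spectral projection of $v$ corresponding to the eigenvalue matched with $g_j$ by the lift, and set $h=\sum_jg_jp_j$. Then $h=h^*\in PM_n(C(T_{II,k}))P$, $\|h\|=\max_j\|g_j\|_\infty<1$, and $exp(2\pi ih)=v$, so $\|u-exp(2\pi ih)\|<\eps$. The final assertion $cel(u)\leq 2\pi$ follows since $cel(exp(2\pi ih))\leq 2\pi\|h\|\leq 2\pi$, together with Corollary \ref{C:compare} to absorb the $\eps$-error as $\eps\to 0$. The main technical subtlety I anticipate is making the identification of the universal cover of $\mbox{\r{P}}^k S^1$ with the above convex set rigorous and verifying that its cyclic deck action $\sigma$ really does change $\sum_jg_j$ by $1$ at a time; this is what enables the clean $\sum g_j=0$ normalization and hence the sharp bound $\|h\|<1$.
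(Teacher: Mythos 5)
Your proposal is correct, and the overall architecture (perturb to distinct spectrum via Lemma \ref{L:transversal}, extract continuous real eigenvalue functions, normalize so that they sum to zero and lie in an interval of length $<1$, reassemble $h=\sum g_j p_j$) matches the paper's. The genuine difference is in how you obtain the global lift with the span bound and the normalization $\sum g_j=0$. The paper lifts each eigenvalue function $\beta_j:T_{II,k}\to S^1$ separately to $b_j:T_{II,k}\to\mathbb{R}$ (using $H^1(T_{II,k})=0$), pins them down at a basepoint $y_0$ so that $b_j(y_0)\in(-\tfrac12,\tfrac12]$, then shifts a subset of $|m|$ of them by $\pm 1$ to make the sum vanish at $y_0$, and finally propagates the span bound $\max-\min<1$ from $y_0$ to all of $T_{II,k}$ via continuity and connectedness, using $a_l-a_j\notin\mathbb{Z}$. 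You instead lift the whole spectrum map $Sp(v):T_{II,k}\to\mbox{\r{P}}^kS^1$ through the universal cover $\{x_1<\cdots<x_k,\ x_k-x_1<1\}$, which gives the ordering and the span bound globally at one stroke, and normalize $\sum g_j$ to $0$ by applying a power of the cyclic deck transformation $\sigma$. Both arguments rely on the same torsion-versus-torsion-free input (Lemma \ref{L:pi_1} and $\pi_1(T_{II,k})=\mathbb{Z}/k\mathbb{Z}$, equivalently $H^1(T_{II,k};\mathbb{Z})=0$); your packaging is slightly cleaner in that it eliminates the basepoint bookkeeping and the continuity-propagation step, at the cost of identifying the universal cover and its deck action explicitly. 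Two small remarks: the citation of Remark \ref{R:pertur} (which concerns paths in $M_k(C([0,1]))$) is not really what is used in Lemma \ref{L:transversal}; and the sharper bound $|g_j|<(k-1)/k$ borrowed from Proposition \ref{P:finite} is more than needed here, since $\sum g_j=0$ and $g_k-g_1<1$ already give $|g_j|<1$, which suffices.
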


\begin{proof}The proof is inspired by the proof of Lemma 3 of \cite{lin}
(see also Remark 3.11 of \cite{GLN1}).
By Lemma \ref{L:transversal} and Lemma \ref{L:lifting}, for each $\eps>0$,
there exists $v\in CU(PM_n(C(T_{II,k}))P)$ with $\|u-v\|\leq \eps$ and
$Sp(v(y))=\{\beta_1(y),\beta_2(y),\ldots\beta_q(y)\}$, where
$q=rank(P)$ and $\beta_l(y)\neq\beta_j(y)$ for all $l\neq j$ and $y\in T_{II,k}$.
Further, $\beta_j:T_{II,k}\rightarrow S^1$ is continuous for each $1\leq j\leq q$.
Also, $v$ can be chosen in $CU(PM_n(C(T_{II,k}))P)$.

Arbitrarily choose a point $y_0\in T_{II,k}$.
We can choose some real $b_j\in C(T_{II,k})$ such that $\beta_j(y)=exp(2\pi ib_j(y))$, where $b_j(y_0)\in(-\frac{1}{2},\frac{1}{2}]$, $j=1,2,\ldots,q$.
Since $v\in CU(PM_n(C(T_{II,k}))P)$, we have $det(v(y))=1$ for each $y\in T_{II,k}$. Then $\sum_{j=1}^qb_j(y_0)=m$ for some integer $m$.
Since $b_j(y_0)\in(0,1]$, we have $-q<m<q$.

If $m\geq 1$, without loss of generality, we can assume that $b_1(y_0)>b_2(y_0)>\cdots>b_q(y_0)$. It follows that $b_m(y_0)>0$.
Define $a_j(y)=b_j(y)-1$, $j=1,2\ldots,m$, $y\in T_{II,k}$ and $a_j(y)=b_j(y)$ for $j>m$, $y\in T_{II,k}$.

Then
$$
\sum_{j=1}^qa_j(y_0)=0,~~\text{and}~~ |a_j(y_0)|<1.\eqno(4.1)
$$

Also, since $b_j(y_0)>-\frac{1}{2}$, we have $\min_j a_j(y_0)=b_m(y_0)-1$.
Note that $\max_j a_j(y_0)<b_m(y_0)$, we have
$$
\max_j a_j(y_0)-\min_j a_j(y_0)<1. \eqno(4.2)
$$

If $m\leq -1$, we directly assume that $b_1(y_0)<b_2(y_0)<\cdots<b_q(y_0)$.
It follows that $b_m(y_0)<0$. Define $a_j(y)=b_j(y)+1$ for $j=1,2,\ldots, m$,
$y\in T_{II,k}$ and $a_j(y)=b_j(y)$ for $j>m$, $y\in T_{II,k}$.
Then (4.1) and (4.2) also hold.


Hence $\beta_j(t)=exp(2\pi ib_j(y))=exp(2\pi ia_j(y))$ for each $1\leq j\leq q$.
Since $det(v(y))=1$ for all $y\in T_{II,k}$, we have
$$
\sum_{j=1}^qa_j(y_0)\in \mathbb{Z}~~\text{for all}~~y\in T_{II,k}.
$$
Since $\sum_{j=1}^qa_j\in C(T_{II,k})$ and $T_{II,k}$ is connected,
it follows that it is a constant.
By (4.1), we have
$$
\sum_{j=1}^qa_j(y)=0 ~~\text{for all}~~ y\in T_{II,k}.\eqno(4.3)
$$
Since $\beta_l(y)\neq \beta_j(y)$ for any $l\neq j$
and $y\in T_{II,k}$, we have
$$
a_l(y)-a_j(y)\notin\mathbb{Z}~~\text{ for all}~~y\in T_{II,k}, l\neq j.
$$
Note that $\max_{1\leq j\leq q}a_j(y)-\min_{1\leq j\leq q}a_j(y)$
is a continuous function on $T_{II,k}$ and $T_{II,k}$ is connected,
by (4.2), we have
$$
0<\max_j a_j(y)-\min_j a_j(y)<1~~\text{for all}~~y\in T_{II,k}.\eqno(4.4)
$$

By (4.3), either $a_j(y)=0$ for all $1\leq j\leq q$, which is impossible,
since $a_j(y)\neq a_l(y)$ when $j\neq l$, or, $h_j(y)<0$ for some $j$
and $h_l(y)>0$ for other $l$. By (4.4), we have
$$
|a_j(y)|<1 ~~\text{for all}~~y\in T_{II,k}.
$$

Fixed $1\leq j\leq q$. For any $y\in  T_{II,k}$, let $p_j(y)$ be the spectrum projection of $v(y)$ with respect to the spectrum $exp(2\pi i a_j(y))$,
which is  rank one projection continuously depending on $y$. Then
$v(y)=\sum_{j=1}^q exp(2\pi i a_j(y)) p_j(y)$.

We let $h\in (PM_n(C(T_{II,k}))P)_{s.a}$ be defined by $h(y)=\sum_{j=1}^q a_j(y) p_j(y)$. Then $\|h\|\leq 1$ and $v=exp(2\pi ih)$, Consequently, $\|u-exp(2\pi ih)\|<\eps$.

\end{proof}

Using a similar method, we  can get the following result.

\begin{thm}\label{T:thmblock}
Let $u\in CU(PM_n(C(X))P)$, where $X$ is one of the space $\{pt\}$,
$[0,1]$ and $S^1$ and $P$ is a projection in $M_n(C(X))$. Then for
any $\eps>0$, there exists a self-adjoint element $h\in PM_n(C(X))P$
with $\|h\|\leq1$ such that $\|u-exp(2\pi ih)\|<\eps$.
In particular, $cel(u)\leq 2\pi$.
\end{thm}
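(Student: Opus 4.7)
The strategy is to mirror the proof of Theorem~\ref{T:T2k} case by case, with the only essential change occurring in the lifting step for $X=S^1$. First, for each choice of $X$, I would perturb $u$ to a unitary $v\in CU(PM_n(C(X))P)$ with $\|u-v\|<\varepsilon$ such that $v(x)$ has $q:=\mathrm{rank}(P)$ distinct eigenvalues at every $x\in X$. A transversality argument analogous to Lemma~\ref{L:transversal} yields such a $v_0\in U$, since the locus of matrices with repeated eigenvalues has codimension at least three in $U(q)$ (Lemma~\ref{L:codimension}) and $\dim X\leq 1$; multiplying $v_0$ by a scalar unitary that cancels $\det v_0$---available because $\det v_0$ is close to $\det u=1$ in $U(C(X))$, hence nullhomotopic and admitting a continuous $q$-th root---then places the result in $CU$ without destroying distinctness.

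Next, I would lift the spectrum map $X\to\mbox{\r{P}}^q S^1$ to ordered continuous eigenvalue functions $\beta_1,\dots,\beta_q\colon X\to S^1$, yielding the spectral decomposition $v=\sum_j\beta_j p_j$ with continuous rank-one spectral projections $p_j$. This lift is automatic for $X=\{\text{pt}\}$ and $X=[0,1]$, and for $X=S^1$ it follows from Lemma~\ref{L:pi_1} together with the fact that the induced map $\pi_1(S^1)\to\pi_1(\mbox{\r{P}}^q S^1)=\mathbb{Z}$ is multiplication by $\deg(\det v)=0$. I then lift each $\beta_j$ to a continuous real-valued $b_j$ with $\beta_j=\exp(2\pi i b_j)$; this is immediate for contractible $X$, and for $X=S^1$ it rests on the topological observation that, since the $\beta_j$ are distinct at every point, each ratio $\beta_i/\beta_k\colon S^1\to S^1$ avoids the value $1$ and is therefore nullhomotopic, giving $\deg(\beta_i)=\deg(\beta_k)$ for all $i,k$; combined with $\sum_j\deg(\beta_j)=\deg(\det v)=0$, each $\deg(\beta_j)=0$, so the real-valued lifts $b_j$ exist globally on $S^1$.

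With the lifts $b_j$ in hand, the argument proceeds exactly as in the proof of Theorem~\ref{T:T2k}: after fixing a reference point $y_0\in X$ and normalizing $b_j(y_0)\in(-1/2,1/2]$, the determinant condition $\det v=1$ forces $\sum_j b_j$ to be a continuous integer-valued function on the connected space $X$, hence a constant integer $m$. The integer-shifting argument of Theorem~\ref{T:T2k} then yields $a_j\in C(X,\mathbb{R})$ with $\sum_j a_j\equiv 0$ and $|a_j|<1$ globally (the latter bound propagates from $y_0$ to all of $X$ because $\max_j a_j-\min_j a_j$ is a continuous non-integer-valued function on the connected set $X$). Setting $h:=\sum_j a_j p_j\in(PM_n(C(X))P)_{s.a.}$ gives $e^{2\pi ih}=v$ exactly, so $\|u-e^{2\pi ih}\|<\varepsilon$ and $\|h\|\leq 1$, whence $cel(u)\leq 2\pi$.

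The main obstacle is concentrated in the $X=S^1$ case, and it splits into the two points made above: (i) placing the distinct-spectrum perturbation $v_0$ back into $CU$ via extraction of a $q$-th root of $\det v_0$; and (ii) showing that each individual degree $\deg(\beta_j)$ vanishes, for which the non-surjectivity argument based on distinctness of eigenvalues is the crucial new input, unavailable in the $T_{II,k}$ setting where the simpler $H^1(T_{II,k},\mathbb{Z})=0$ was used. Once these are dispensed with, the remaining steps are formally identical to those of Theorem~\ref{T:T2k}, and no substantively new estimates beyond Corollary~\ref{C:compare} and the standard spectral-calculus setup are required.
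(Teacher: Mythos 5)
Your proof is correct and matches the paper's intended strategy (the paper only says ``using a similar method'' after Theorem~\ref{T:T2k}); the cases $X=\{\text{pt}\}$ and $X=[0,1]$ are essentially trivial since those spaces are contractible, and for $X=S^1$ you correctly identify that the torsion argument of Lemma~\ref{L:lifting} (which relies on $\pi_1(T_{II,k})=\mathbb{Z}/k\mathbb{Z}$) must be replaced. Your replacement is right on both points: the spectrum map $S^1\to\mbox{\r{P}}^q S^1$ induces the zero map on $\pi_1$ because under the isomorphism of Lemma~\ref{L:pi_1} it is multiplication by $\deg(\det v)=0$, and each individual eigenvalue function $\beta_j$ has degree $0$ because pointwise-distinct circle-valued maps have equal degree (their ratio omits $1$, hence is nullhomotopic) while these degrees sum to $\deg(\det v)=0$; the $q$-th root correction to restore $\det v\equiv 1$ after the transversality perturbation, a detail the paper also leaves unstated for $T_{II,k}$, is handled correctly as well.
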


Now we are going to prove  the following result.
Its proof is similar to Lemma 3.12 in \cite{GLN1}.

\begin{thm}\label{T:dimensionatrop}
Let $u\in CU(M_l(I_k))$. Then for
any $\eps>0$, there exists a self-adjoint element $h\in M_l(I_k)$
with $\|h\|\leq1$ such that $\|u-exp(2\pi ih)\|<\eps$.
In particular, $cel(u)\leq 2\pi$.
\end{thm}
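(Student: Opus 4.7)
The plan is to adapt the strategy of Theorem \ref{T:T2k} to the dimension drop algebra $M_l(I_k)$, where $M_l(I_k)\subset M_{lk}(C([0,1]))$ consists of functions whose values at $0$ and $1$ lie in $M_l(\bC)\otimes 1_k$. The new difficulty is that $u(0)$ and $u(1)$ are forced to have eigenvalues in $l$ groups of multiplicity $k$, so we cannot ask for $lk$ distinct eigenvalues on all of $[0,1]$, as was done for $T_{II,k}$.

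First I would perform a two--stage perturbation. At the endpoints, perturb $u(0),u(1)\in M_l(\bC)\otimes 1_k$ inside $U(l)\otimes 1_k$ so that each endpoint matrix has $l$ distinct $M_l(\bC)$-eigenvalues. Then, fixing these new endpoint values, carry out a transversality perturbation on $(0,1)$ in the spirit of Proposition \ref{L:pertur} and Remark \ref{R:pertur} to obtain $v\in CU(M_l(I_k))$ with $\|u-v\|<\eps/2$ such that $v(t)$ has $lk$ distinct eigenvalues for every $t\in(0,1)$. Care must be taken to stay within $M_l(I_k)$ and within $CU$ throughout; this is the place where the dimension drop structure at the endpoints enters non-trivially.

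Next, since $(0,1)$ is simply connected and $v(t)$ has $lk$ distinct eigenvalues on $(0,1)$, there exist continuous eigenvalue functions $\beta_1,\ldots,\beta_{lk}:(0,1)\to S^1$. As $t\to 0^+$ these cluster into $l$ groups of size $k$ converging to the $l$ eigenvalues of $v(0)$, and similarly at $t=1$. After a suitable relabelling, I would lift each $\beta_j$ to a continuous $a_j:(0,1)\to\mathbb{R}$ with $\beta_j(t)=exp(2\pi i a_j(t))$, choosing the integer ambiguity in the lifts so that within each endpoint cluster all $a_j$'s share a common limit; the $a_j$'s then extend continuously to $[0,1]$. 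The $CU$ condition for $v$, via the de la Harpe--Skandalis determinant applied to $M_l(I_k)$ (whose image of $K_0$ in $Aff(T)$ is $\bZ$ via the rank at any trace), forces $\sum_j a_j(t)$ to be a constant integer, which can be normalized to $0$ by an integer shift on one lift. Since the distinct eigenvalues of $v(t)$ lie in some open arc of length strictly less than $1$ on $S^1$, the argument from lines (4.1)--(4.4) in the proof of Theorem \ref{T:T2k} transcribes directly to yield $|a_j(t)|<1$ for all $t$ and $j$.

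Finally, let $p_j(t)$ be the rank-one spectral projection of $v(t)$ at the eigenvalue $\beta_j(t)$ for $t\in(0,1)$, and at the endpoints merge each cluster of $k$ projections into a single rank-$k$ spectral projection of the form $q_i\otimes 1_k$ with $q_i$ rank one in $M_l(\bC)$. Then
$$
h(t):=\sum_{j=1}^{lk}a_j(t)\,p_j(t)
$$
extends continuously to $[0,1]$ because the $a_j$'s within each endpoint cluster coincide in the limit, and at $t=0,1$ the expression simplifies to $\sum_i a_i^{(\text{endpt})}(q_i\otimes 1_k)\in M_l(\bC)\otimes 1_k$, so $h\in M_l(I_k)$. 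Then $v=exp(2\pi i h)$ and $\|h\|\leq 1$, giving $\|u-exp(2\pi i h)\|<\eps$ and hence $cel(u)\leq 2\pi$. The main obstacle is the combined perturbation-and-lifting step: producing $v\in CU(M_l(I_k))$ with generic spectrum on $(0,1)$ that is simultaneously compatible with the forced $M_l(\bC)\otimes 1_k$ structure at $t=0,1$, and then choosing lifts that collapse correctly on each endpoint cluster so that $h$ lies in $M_l(I_k)$; once this is achieved, the bound $|a_j|<1$ is a verbatim transcription of the $T_{II,k}$ argument in Theorem \ref{T:T2k}.
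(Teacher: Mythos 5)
Your proposal outlines a sensible general strategy, but it leaves the crucial step unresolved---you flag it yourself as ``the main obstacle,'' and it is not a technical footnote that can be deferred. After perturbing $v$ so that $v(t)$ has $lk$ distinct eigenvalues on $(0,1)$, you lift the eigenvalue functions $\beta_j$ to $a_j\colon(0,1)\to\mathbb{R}$ and assert that the integer ambiguity in these lifts can be chosen so that, within each endpoint cluster, all the $a_j$ share a common limit at \emph{both} $t=0$ and $t=1$. This is a genuine constraint system with an obstruction. If two indices $j,j'$ belong to the same cluster at $t=0$ and also both belong to some cluster at $t=1$ (for $l=1$ this is the case for every pair), then the integer
\[
D_{jj'}:=\bigl(a_j(1)-a_{j'}(1)\bigr)-\bigl(a_j(0)-a_{j'}(0)\bigr)
\]
is invariant under any integer shifts of the lifts, and simultaneous compatibility at both endpoints forces $D_{jj'}=0$. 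Nothing in the proposal shows this: the transversality perturbation does not change such winding invariants, and the de la Harpe--Skandalis identity $\sum_j a_j=\mathrm{const}$ that you invoke controls only the sum, not the pairwise differences. When some $D_{jj'}\neq 0$, the element $h=\sum_j a_j p_j$ built from the spectral projections of $v$ fails to land in $M_l(I_k)$, because $h(0)$ or $h(1)$ is not in $M_l(\mathbb{C})\otimes 1_k$. Concretely, with $l=1$, $k=2$ and $v(t)=\mathrm{diag}(e^{2\pi i t},e^{-2\pi i t})$, the spectral projections of $v$ are constant, the lifts are $a_1(t)=t$, $a_2(t)=-t$ with $D_{12}=2$, and your recipe yields $h(1)=\mathrm{diag}(1,-1)\notin\mathbb{C}1_2$, so $h\notin I_2$. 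Such a $v$ \emph{can} still be approximated by $\exp(2\pi i h')$ with $h'\in I_2$ and $\|h'\|\le1$, but only by reflecting the eigenvalue branches at the crossing $t=1/2$ and letting the rank-one projections rotate near that crossing, so that the $p'_j$ are no longer spectral projections of $v$---exactly the manoeuvre your construction does not allow.

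The analogy with Theorem~\ref{T:T2k} is also thinner than claimed: there the global lift is furnished by Lemmas~\ref{L:pi_1} and~\ref{L:lifting}, which exploit the torsion of $\pi_1(T_{II,k})$, and no such topological shortcut is available for $I_k$. One must instead use the $CU$ (or at least $U_0$) hypothesis to control the windings $D_{jj'}$, or modify the construction of $h$ near the spectral crossings as indicated above; this is precisely the content of Lemma~3.12 of \cite{GLN1}, which the paper cites rather than reproves. Your closing remark that ``once this is achieved, the bound $|a_j|<1$ is a verbatim transcription of the $T_{II,k}$ argument'' therefore mislocates where the real difficulty lies.
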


\begin{proof}
For any $u\in CU(M_l(I_k))$ and $\eps>0$,
there exists a unitary $v\in M_l(I_k)$ such that
$$
\|u-v\|<\eps,
$$
where $v$ can be written as $v(t)=U(t)^*exp(2\pi iH(t))U(t)$,
$H(t)$ is a self-adjoint element in $M_l(I_k)$ with $\|H(t)\|\leq 2\pi$.
Then $cel(v)\leq 2\pi$.
\end{proof}

Now we get the following result.

\begin{thm}\label{T:idealproperty}
Let $A$ be an $AH$ algebra with ideal property of no dimension growth.
Then  for any $\eps>0$ and any $u\in CU(A)$, there exists a self-adjoint element $h$ in $A$ with $\|h\|\leq1$ such that $\|u-exp(2\pi i h)\|<\eps$. In particular,
$cel_{CU}(A)\leq 2\pi$
\end{thm}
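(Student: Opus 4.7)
The plan is to reduce the statement to the building block calculations already established in Theorems \ref{T:T2k}, \ref{T:thmblock}, and \ref{T:dimensionatrop}, using the structural result from \cite{GJLP2,GJLP1} that every $AH$ algebra with the ideal property of no dimension growth is an $\mathcal{AHD}$ algebra. Write $A=\lim(A_n,\phi_{n,n+1})$ with each $A_n=\bigoplus_{i=1}^{t_n}B_{n,i}\in \mathcal{HD}$, where each summand $B_{n,i}$ is either of the form $M_{l}(I_k)$ or of the form $PM_{m}(C(X))P$ with $X\in\{pt,[0,1],S^1,T_{II,k}\}$.

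Given $u\in CU(A)$ and $\varepsilon>0$, I would first use the definition of $CU(A)$ as the closure of the commutator subgroup of $U_0(A)$ together with the density of $\bigcup_n \phi_{n,\infty}(A_n)$ in $A$ to produce, for some large $n$, a unitary $v_n\in CU(A_n)$ such that $\|u-\phi_{n,\infty}(v_n)\|<\varepsilon/4$; concretely, $u$ is within $\varepsilon/8$ of a finite product of commutators of unitaries in $U_0(A)$, each of those unitaries is within a controlled distance of the image of a unitary in $U_0(A_n)$ for large $n$ (using functional calculus on the near-unitary preimages), and one then assembles these into a product of commutators in $A_n$ whose image approximates $u$. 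Since $CU$ respects finite direct sums, $v_n$ decomposes as $v_n=(v_{n,1},\dots,v_{n,t_n})$ with $v_{n,i}\in CU(B_{n,i})$.

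Next, on each building block $B_{n,i}$ I would apply the appropriate previous result: Theorem \ref{T:T2k} when $B_{n,i}=PM_m(C(T_{II,k}))P$, Theorem \ref{T:thmblock} when $X\in\{pt,[0,1],S^1\}$, and Theorem \ref{T:dimensionatrop} when $B_{n,i}=M_l(I_k)$. Each of these yields a self-adjoint $h_{n,i}\in B_{n,i}$ with $\|h_{n,i}\|\leq 1$ and $\|v_{n,i}-\exp(2\pi i h_{n,i})\|<\varepsilon/4$. Setting $h_n=(h_{n,1},\dots,h_{n,t_n})\in A_n$ gives a self-adjoint element of norm at most $1$ with $\|v_n-\exp(2\pi i h_n)\|<\varepsilon/4$. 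Defining $h:=\phi_{n,\infty}(h_n)\in A$, we still have $\|h\|\leq 1$ and, using that $\phi_{n,\infty}$ is a $*$-homomorphism,
\[
\|u-\exp(2\pi i h)\|\leq \|u-\phi_{n,\infty}(v_n)\|+\|\phi_{n,\infty}(v_n-\exp(2\pi i h_n))\|<\varepsilon/2.
\]
The conclusion $cel(u)\leq 2\pi$ then follows from Lemma \ref{L:compare} applied to $u\exp(-2\pi ih)$ in combination with the exponential decomposition $\exp(2\pi ih)$ (whose length is at most $2\pi\|h\|\leq 2\pi$), letting $\varepsilon\to 0$.

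The main obstacle I anticipate is the initial reduction step: getting a genuine approximation $v_n\in CU(A_n)$ of $u$ rather than merely an approximation of $u$ by an element of $\phi_{n,\infty}(U_0(A_n))$. This requires carefully tracking how commutators of unitaries lift along the inductive system and verifying that the error introduced by lifting each factor can be controlled in the product. Once that is in hand, the remainder is a clean componentwise application of the block-level theorems already in place, and the norm bound $\|h\|\leq 1$ is preserved by the direct-sum assembly and by the connecting map.
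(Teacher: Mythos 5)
Your proof follows essentially the same strategy as the paper's: reduce to the $\mathcal{AHD}$ presentation, apply the block-level results (Theorems \ref{T:T2k}, \ref{T:thmblock}, \ref{T:dimensionatrop}) on each summand, and push forward along $\phi_{n,\infty}$. You are in fact more complete than the paper's terse argument in two respects: you spell out the density step reducing a general $u\in CU(A)$ to an element of $CU(A_n)$ (the paper only treats $\phi_{n,\infty}(u)$ with $u\in CU(A_n)$ and leaves the reduction to the reader), and you correctly invoke Theorem \ref{T:T2k} for the $T_{II,k}$ blocks, which the paper's proof text omits to cite.
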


\begin{proof}
We assume that $A=\lim(A_n, \phi_{n,n+1})$, where $A_n\in \mathcal{HD}$ for each $n\geq 1$.
Using Theorem \ref{T:thmblock} and \ref{T:dimensionatrop}, for each $u\in CU(A_n)$
we have $cel(\phi_{n,m}(u))\leq 2\pi$ for each $m\geq n$. Noting that $cel(\phi_{n,\infty}(u))=\inf_{m\geq n}cel(\phi_{n,m}(u))\leq 2\pi$,
hence $cel(\phi_{n,\infty}(u))\leq 2\pi.$
\end{proof}

The above theorem generalize Theorem 4.6 of \cite{lin} (see Theorem A in  the introduction) for the case of simple $AH$ algebra.

\vspace{0.2in}


The following Theorem is the main theorem of this section. This theorem is not quite a consequence of Theorem \ref{T:Main} and Theorem \ref{T:idealproperty}, since it is not assume that $\lim_{n\to \infty} rank(P_{n,i})=\infty$. But
we assume $A$ has ideal property.

\begin{thm}\label{T:idealproperty1}
Let $A$ be an $AH$ algebra with ideal property of no dimension growth. If we further assume that $A$ is not of real rank zero,
then $cel_{CU}(A)=2\pi$.
\end{thm}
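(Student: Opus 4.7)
The upper bound $cel_{CU}(A)\le 2\pi$ is already furnished by Theorem~\ref{T:idealproperty}, so the task is to prove the matching lower bound $cel_{CU}(A)\ge 2\pi$. The plan is to mimic the proof of Theorem~\ref{T:Main}, replacing its slow dimension growth hypothesis---which in the no-dimension-growth setting is just the condition $\lim_{m\to\infty}\min_i\mathrm{rank}(\mathbf{1}_{A_m^i})=\infty$ of Remark~\ref{july-23-2018}, and which can genuinely fail here as Proposition~\ref{P:finite} shows---by a maneuver that uses the ideal property to cut down to a corner where that condition does hold.

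\textbf{Step 1 (locate a witness).} Since $A$ is not of real rank zero, Lemma~\ref{R:realrank1} yields an integer $n_0$, an interval $[c,d]\subset[0,1]$ and a positive contraction $x\in A_{n_0}$ such that for every $m\ge n_0$ some block $A_m^{k(m)}$ carries an eigenvalue function of $\phi_{n_0,m}(x)$ whose range contains $[c,d]$. This single element will serve as the witness throughout.

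\textbf{Step 2 (extract a corner with growing ranks).} Using the ideal property, together with the $\mathcal{AHD}$ structure of $A$ supplied by \cite{GJLP2,GJLP1}, I would find at some stage $n_1\ge n_0$ a projection $p\in A_{n_1}$ whose hereditary cut-down $B:=\phi_{n_1,\infty}(p)A\phi_{n_1,\infty}(p)$ satisfies: (a) $B$ is not of real rank zero, because $\phi_{n_0,n_1}(x)$ (or a suitable compression of it) still has an eigenvalue function covering a sub-interval of $[c,d]$ in the surviving blocks; (b) the block ranks $\mathrm{rank}(\phi_{n_1,m}^{-,j}(p))$ tend to $\infty$ among nonzero components; and (c) for any preassigned $L$, one has $L[p]<[\mathbf{1}_{A_{n_1}}]$. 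The heuristic is that any block of $A_m$ whose rank remains bounded as $m\to\infty$ contributes, in the inductive limit, only a finite-dimensional summand; such summands generate a real rank zero ideal $J\subset A$, and since $x$ cannot live entirely in $J$ (the witness would then be lost upon passing to a real rank zero quotient), the ideal property supplies a projection separating $x$ from $J$ and simultaneously permitting the rank-$L$ splitting.

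\textbf{Step 3 (run the main theorem construction).} With $p$ in hand, choose $L$ with $2\pi/L<\eps$, split $\phi_{n_0,n_1}(p)$ into $L$ pairwise orthogonal equivalent sub-projections $p_1,\dots,p_L$ with $q=p_1+\cdots+p_L<\mathbf{1}_{A_{n_1}}$, and form exactly the unitary
\[
u\;=\;\mathrm{diag}\bigl(e^{2\pi i\,\chi_2\circ\chi(x)},\,e^{2\pi i\,\chi_1\circ\chi(x)},\,\dots,\,e^{2\pi i\,\chi_1\circ\chi(x)}\bigr)\;\oplus\;(\mathbf{1}_{A_{n_1}}-\phi_{n_0,n_1}(q))
\]
appearing in Theorem~\ref{T:Main}. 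Property (b) ensures that the witnessing block $A_m^{k(m)}$ has rank at least $L\cdot\mathrm{rank}(\phi_{n_1,m}^{-,k(m)}(p))$ for large $m$, so Corollary~\ref{gen-X} applies verbatim and yields $cel(\phi_{n_1,m}(u))\ge 2\pi(1-\eps)$ for all sufficiently large $m$. Since $u\in CU(A_{n_1})$ and $\eps$ is arbitrary, the inequality $cel_{CU}(A)\ge 2\pi$ follows, completing the proof.

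The genuine difficulty is Step 2: we must organize the bounded-rank residues of the inductive system into a bona fide ideal $J\subset A$, verify that the witness $x$ survives modulo $J$, and then use the ideal property to produce $p$ cutting off $J$ while preserving the witness. Once this extraction is carried out, Steps 1 and 3 are just repetitions of arguments already in the paper, and the theorem follows by combining the resulting $cel_{CU}(A)\ge 2\pi$ with the upper bound of Theorem~\ref{T:idealproperty}.
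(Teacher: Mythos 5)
Your overall strategy is correct and matches the paper's: use Theorem~\ref{T:idealproperty} for the upper bound, then rerun the construction of Theorem~\ref{T:Main} inside a suitable corner of $A$ where the rank-growth obstruction disappears. But you have correctly identified that Step~2 is where the real work lies, and the heuristic you offer there does not close the gap; in fact, as stated it is misleading. Blocks of bounded rank in a no-dimension-growth system are \emph{not} finite-dimensional --- $M_2(C([0,1]))$, for example, is a perfectly good bounded-rank block --- so the ``bounded-rank residues'' do not simply aggregate into a finite-dimensional (hence real rank zero) ideal $J$ that the ideal property lets you peel off. There is no such ideal to cut.

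What the paper uses instead is Pasnicu's dichotomy (Proposition~\ref{P:dichotomy}): for an $AH$ algebra with the ideal property and no dimension growth, given any $n$, finite set, $\eps$, and integer $N$, there is $m_0$ so that every partial map $\phi_{n,m}^{i,j}$ with $m\ge m_0$ either lands in a block whose rank exceeds $N(\dim X_{m,j}+1)$, or is $\eps$-close on the finite set to a map factoring through a finite-dimensional algebra. This is a statement about the \emph{connecting maps}, not about the target blocks being finite-dimensional. Applied at a single stage $m_0$ to the witness $x$ of Proposition~\ref{L:variation}, it produces a clean split $\mathbf{1}_{A_{m_0}}=P\oplus R$ with $P=\oplus_{j\in\Lambda}P_{m_0,j}$ over the large-rank index set $\Lambda$; the key point (Remark~\ref{P:rem} plus Corollary~\ref{june-17}) is that $EV(\phi_{m_0,m}^{-,j}(R\phi_{1,m_0}(x)R))<\delta_0$ forever afterward, so the large eigenvalue variation guaranteed by (4.5) must be carried by the $P$-corner, whence $\phi_{m_0,\infty}(P)A\phi_{m_0,\infty}(P)$ is not of real rank zero. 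Since every block of $P$ has rank $\ge N(\dim X_{m_0,j}+1)$, a standard vector-bundle splitting (Husemoller) yields the $L$ mutually orthogonal equivalent rank-one subprojections $p_\ell^{(j)}<P_{m_0,j}$ needed, with $q=\oplus_{j\in\Lambda}p_1^{(j)}$ full in the corner (so $P<W[q]$), and from there Lemma~\ref{R:realrank1} and the Theorem~\ref{T:Main} machinery apply verbatim. So your Steps~1 and 3 are sound, but the actual bridge for Step~2 is a specific, nontrivial structure theorem about partial maps under the ideal property, not an ideal-theoretic decomposition of $A$ itself.
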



To prove the above result, we need the following Pasinicu's dichotomy lemma.

\begin{prop}[\cite{Pasnicu}, Lemma 2.11]\label{P:dichotomy}
Let $A=\lim_{\rightarrow}(A_n=\oplus_{i=1}^{t_n}P_{n,i}M_{[n,i]}(C(X_{n,i}))P_{n,i},\phi_{n,m})$
be an $AH$ algebra with the ideal property and with no dimension growth condition. Then  for any $n$, any finite subset $F_n^i\subset P_{n,i}M_{[n,i]}(C(X_{n,i}))P_{n,i}\subset A_n$, any $\eps>0$ and any
positive integer $N$, there exists $m_0>n$ such that each partial map
$\phi_{n,m}^{i,j}$ with $m\geq m_0$ satisfies either:
\begin{enumerate}
\item[(1)] $rank(\phi_{n,m}^{i,j}(P_{n,i}))\geq N(\dim X_{m,j}+1)$, or
\item[(2)] there is a homomorphism
$$
\psi_{n,m}^{i,j}: A_n^i\rightarrow \phi_{n,m}^{i,j}(P_{n,i})A_m^j\phi_{n,m}^{i,j}(P_{n,i})
$$
with finite dimensional image such that
$\|\phi_{n,m}^{i,j}(f)-\psi_{n,m}^{i,j}(f)\|<\eps$ for all $f\in F_n^i$.
\end{enumerate}
\end{prop}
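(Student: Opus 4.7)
The upper bound $cel_{CU}(A)\leq 2\pi$ is immediate from Theorem \ref{T:idealproperty}, so the task is to prove $cel_{CU}(A)\geq 2\pi$. One cannot directly invoke Theorem \ref{T:Main}, because under the hypothesis ``ideal property plus no dimension growth'' the ranks $\mathrm{rank}(P_{n,i})$ need not grow to infinity (as shown by Proposition \ref{P:finite} together with Remark \ref{july-23-2018}). The plan is to rerun the proof of Theorem \ref{T:Main}, using Pasnicu's dichotomy (Proposition \ref{P:dichotomy}) as a substitute for slow dimension growth to supply rank growth in exactly those blocks where it is needed.

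Given $\epsilon>0$, choose $L$ with $2\pi/L<\epsilon$. Since $A$ is not real rank zero, Lemma \ref{R:realrank1} yields an interval $[c,d]\subset[0,1]$, an integer $n$, and a positive element $x\in A_n$ of norm $1$ such that for each $m\geq n$ some block $k(m)$ of $A_m$ carries an eigenvalue function of $\phi_{n,m}^{-,k(m)}(x)$ with range containing $[c,d]$. Apply Proposition \ref{P:dichotomy} to $F=\{x\}$ with tolerance $\epsilon'<(d-c)/3$ and $N=L$, producing $m_0\geq n$. The key observation is that at the interval-covering block $k(m)$, alternative (2) of the dichotomy cannot hold simultaneously for every summand $i$ of $A_n$: if each $\phi_{n,m}^{i,k(m)}$ were $\epsilon'$-close to a map with finite-dimensional image, every $\phi_{n,m}^{i,k(m)}(x_i)$ would have eigenvalue variation at most $2\epsilon'$ by Corollary \ref{june-17}(a), and since the images in $A_m^{k(m)}$ are mutually orthogonal, Corollary \ref{june-17}(b) forces $EV(\phi_{n,m}^{-,k(m)}(x))\leq 2\epsilon'<d-c$, contradicting the covering. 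Therefore for some summand $i_0=i_0(m)$ alternative (1) must hold, giving $\mathrm{rank}(\phi_{n,m}^{i_0,k(m)}(P_{n,i_0}))\geq L(\dim X_{m,k(m)}+1)$.

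Fix $n_1=m_0$, $i_0=i_0(n_1)$, $k_0=k(n_1)$, and reproduce the construction of Theorem \ref{T:Main} inside the block $A_{n_1}^{k_0}$: the rank inequality supplies $L$ mutually orthogonal equivalent trivial subprojections of rank $\dim X_{n_1,k_0}+1$ inside $P_1=\phi_{n,n_1}^{i_0,k_0}(P_{n,i_0})$, realizing an $M_L$ structure inside $P_1A_{n_1}^{k_0}P_1$; taking the interval-covering eigenvalue function of $\phi_{n,n_1}^{i_0,k_0}(x)$ in place of the ``$x$'' of that construction, we assemble a unitary $u=h\oplus(1_{A_{n_1}}-q)\in A_{n_1}$ with $\det u(z)\equiv 1$, so $u\in CU(A_{n_1})$ by \cite{Phillips96}. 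The lower bound $cel(\phi_{n_1,m}(u))\geq 2\pi-\epsilon$ at each $m\geq n_1$ then follows exactly as in the final stretch of the proof of Theorem \ref{T:Main}, transferring the eigenvalue structure to a component of $\phi_{n_1,m}(u)$ by Remark \ref{june-15-1}(b) and invoking Corollary \ref{gen-X}. Taking the inductive limit gives $cel(\phi_{n_1,\infty}(u))\geq 2\pi-\epsilon$, and letting $\epsilon\to 0$ yields $cel_{CU}(A)\geq 2\pi$.

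The main obstacle is that the summand $i_0$ and the interval-covering block $k(m)$ must be chosen coherently for every $m\geq n_1$ (not merely along a pigeonhole subsequence), so that the single unitary $u$ satisfies the lower bound at every subsequent level. To handle this I would choose the function $f$ that defines the inner element $y$ in the Theorem \ref{T:Main} construction to be literally the covering eigenvalue function $h_{i(n_1)}^{k_0}$ of $\phi_{n,n_1}^{i_0,k_0}(x)$, and then reapply Lemma \ref{R:realrank1} and Proposition \ref{P:dichotomy} at the new base level $n_1$ restricted to the hereditary subalgebra $\phi_{n_1,\infty}(q)A\phi_{n_1,\infty}(q)$ (which inherits non-real-rank-zero from the covering eigenvalue): this supplies a coherent family of good blocks $k(m)$ whose partial maps from $A_{n_1}^{k_0}$ are of large-rank type, through which the pulled-back eigenvalues of our $u$ still cover $[-1+\tfrac1L,0]$, so that Corollary \ref{gen-X} gives the required estimate uniformly in $m$.
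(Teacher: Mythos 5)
The statement you were asked to prove is Proposition \ref{P:dichotomy} itself, i.e.\ Pasnicu's dichotomy: for an $AH$ algebra with the ideal property and no dimension growth, and for any finite set $F_n^i$, $\eps>0$ and $N$, every partial map $\phi_{n,m}^{i,j}$ with $m$ large either has $rank(\phi_{n,m}^{i,j}(P_{n,i}))\geq N(\dim X_{m,j}+1)$ or is $\eps$-approximated on $F_n^i$ by a homomorphism with finite dimensional image. Your proposal never engages with this statement. Instead you explicitly \emph{invoke} Proposition \ref{P:dichotomy} as a known tool ("Apply Proposition \ref{P:dichotomy} to $F=\{x\}$ \dots") and then prove a lower bound $cel_{CU}(A)\geq 2\pi$, which is essentially Theorem \ref{T:idealproperty1} of the paper. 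As a proof of the dichotomy proposition this is circular: the result to be established is assumed as an ingredient, and nothing in your argument explains why the ideal property forces, for each pair $(i,j)$ failing the rank condition, an approximate factorization of $\phi_{n,m}^{i,j}$ through a finite dimensional algebra.

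Concretely, what is missing is the entire content of the dichotomy: one must use the ideal property (every closed two-sided ideal is generated by its projections) together with the no dimension growth hypothesis to show that when the ranks $rank(\phi_{n,m}^{i,j}(P_{n,i}))$ stay small relative to $N(\dim X_{m,j}+1)$, the partial maps can be perturbed, on the given finite set, to maps with finite dimensional image. In the paper this proposition is not proved at all; it is quoted from \cite{Pasnicu}, Lemma 2.11, and any self-contained proof would have to reproduce the arguments there. (Read instead as a proof of Theorem \ref{T:idealproperty1}, your outline follows the paper's strategy for that theorem reasonably closely --- dichotomy plus Remark \ref{P:rem} to isolate large-rank blocks, passage to a non--real-rank-zero corner, then the construction of Theorem \ref{T:Main} --- but that is a different statement from the one at hand.)
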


\begin{rem}\label{P:rem}  Let $X,Y$ be connected finite simplicial complxes. Let $f\in PM_n(C(X))P$ be a self-adjoint element and $\phi, \psi:PM_n(C(X))P \to QM_m(C(Y))Q$ be two unital homomorphisms with $\psi$  factoring through a finite dimensional algebra such that  $\|\phi(f)-\psi(f)\|< \eps$. Then all functions in the eigenvalue list of $\psi(f)$ are constant functions and consequently $EV(\psi(f))=0$. Also by Weyl inequality \cite{Weyl}, $EV(\phi(f))< \eps$.

\end{rem}

\begin{proof}[\emph{\bf Proof of Theorem \ref{T:idealproperty1}}]

Since $A$ is not real rank zero, by Proposition \ref{L:variation} (for the case of no dimension growth),
there exist $\delta_0>0$, $N$, $x\in (A_{N})_+$
with $\|x\|=1$ and a subsequence $\{A_{n_k}\}_{k=2}^\infty$
with $n_2>N$ such that for each $k\geq 2$,
there is a block $A_{n_k}^j$ with
$$
EV(\phi_{N,n_k}^{-,j}(x))\geq \delta_0{\red{.}}\eqno(4.5)
$$
To save notations, we can directly assume that $N=1$
and $n_k=k$ for every $k\geq 2$.


For any $\eps>0$ and $N>\frac{4}{\eps}$, by Proposition \ref{P:dichotomy}
and Remark \ref{P:rem}, there exists $m_0>1$ such that
for each block $A_m^j=P_{m,j}M_{[m,j]}(C(X_{m,j}))P_{m,j}$, either:
$$
rank(P_{m,j})\geq N(\dim X_{m,j}+1),
$$
or
$$
EV(\phi_{1,m}^{-,j}(x))<\delta_0{\red{.}}
$$

We denote
$$
\Lambda=\{1\leq j\leq t_{m_0}: rank(P_{m,j})\geq N(\dim X_{m_0,j}+1)~
\}.
$$
Let $P=\oplus_{j\in\Lambda}P_{m_0,j}$ and $R=\oplus_{j\notin \Lambda}P_{m_0,j}$ and $x^j=\phi_{1, m_0}^{-,j}(x)$. Set $x_1=\oplus_{j\in \Lambda} x^j =P\phi_{1,m_0}(x)P $ and $x_2=\oplus_{j\notin \Lambda} x^j =R\phi_{1,m_0}(x)R$. From above, we know that $EV(x^j)<\delta_0$ for $j\notin \Lambda$. By Corollary \ref{june-17}, for any $m>m_0$ and any $ j\in\{1,2,\cdots, t_m\} $, $\phi_{m_0, m}^{-, j}(x_2)< \delta_0$. By (4.5), for each $m>m_0$, there is a $j\in \{1,2,\cdots,t_m\}$ such that $\phi_{1, m}^{-,j}(x)\geq \delta_0$. Note that $\phi_{1,m}^{-,j}(x)=\phi_{m_0, m}^{-,j}(x_1)+\phi_{m_0,m}^{-,j}(x_2)$. By \ref{june-17} (b), $EV(\phi_{m_0, m}^{-,j}(x_1))\geq\delta_0$.

Hence by Proposition \ref{L:variation},
$\phi_{m_0,\infty}(P)A\phi_{m_0,\infty}(P)=\lim(\phi_{m_0, m}(P) A_m\phi_{m_0, m}(P), \phi_{m, m'})$ is not real rank zero{\red{.}}

By Theorem 1.2 in Page 112 of \cite{Husemoller}, for each $j\in\Lambda$,
there exists a set of mutually orthogonal rank one projections
$p_1^{(j)}, p_2^{(j)},\cdots, p_L^{(j)}$ with $p_l^{(j)}<P_{m_0,j}$
and $p_l^{(j)}\sim p_1^{(j)}$ for each $1\leq l\leq L$.
We let $q=\oplus_{j\in \Lambda}p^{(j)}_1$.
There exists some $W\in \mathbb{N}$ such that
$$
P<W[q].
$$
It follows that $\phi_{m_0,\infty}(q)A\phi_{m_0,\infty}(q)$
is stably isomorphic to $\phi_{m_0,\infty}(P)A\phi_{m_0,\infty}(P)$,
and hence  $\phi_{m_0,\infty}(q)A\phi_{m_0,\infty}(q) =\lim(\phi_{m_0,m}(q)A_{m}\phi_{m_0,m}(q), \phi_{m,m'})$
is not of real rank zero.
By Lemma \ref{R:realrank1}, there exist an interval $[c,d]\subset[0,1]$,
an integer $m_1\geq m_0$, and $y\in (\phi_{m_0,m_1}(q)A_{m_1}\phi_{m_0,m_1}(q))_+$
with $\|y\|=1$ such that for each $m\geq m_1$, $\widetilde{\phi}_{m_1,m}(y)$
has the following representation
$$
\widetilde{\phi}_{m_1,m}(y)=(z_1^{m},z_2^{m},\cdots,z_{k_m}^{m})\in \bigoplus_{j=1}^{k_m}\phi_{m_0,m}^{-,j}(q) A_m^j \phi_{m_0,m}^{-,j}(q)=\phi_{m_0, m}(q)A_m\phi_{m_0,m}(q),
$$
where $\widetilde{\phi}_{m_1,m}=\phi_{m_1,m}|_{\phi_{m_0,m_1}(q)A_{m_1}\phi_{m_0,m_1}(q)}$,
there exists $1\leq k(m)\leq k_m$, $1\leq i(m)\leq [m,k(m)]$ such that
$$
[c,d]\subset rang(h^{k(m)}_{i(m)}),
$$
where $h_i^{k(m)}$ is the $i$-th lowest eigenvalue of $y_{k(m)}^m$ for $1\leq i\leq [m,k(m)]$.

We let $Q=\sum_{l=1}^L\oplus_{j\in \Lambda}p_l^{(j)}$, then $QA_{m_0}Q$ and $M_L(qA_{m_0}q)$ are isomorphic. This means that $M_L(qA_{m_0}q)\subset A_{m_0}$
and hence $M_L(\phi_{m_0,m_1}(q)A_{m_1}\phi_{m_0,m_1}(q))\subset A_{m_1}$.

Applying a similar proof as in Theorem \ref{T:Main}, we can prove that
$$
cel_{CU}(A)\geq 2\pi.
$$

\end{proof}

\section{Exponential length in the Jiang-Su algebra}

We shall show that there exists $u\in CU(\mathcal{Z})$
such that $cel(u)\geq 2\pi$.
First, we review the construction of the Jiang-Su algebra
$\mathcal{Z}$. We refer the readers  to \cite{JiangSu} for details.
We denote by $I[m_0,m,m_1]$ the following dimension drop algebra:
$$
\{f\in C([0,1],M_m):f(0)\in M_{m_0}\otimes {\bf 1}_{m/{m_0}},f(1)\in {\bf 1}_{m/{m_1}}\otimes M_{m_1}\},
$$
where $m_0$, $m_1$ and $m$ are positive integers with $m$ divisible by both
$m_0$ and $m_1$. If $m_0$ and $m_1$ are relatively prime, and $m=m_0m_1$,
then $I[m_0,m,m_1]$ is called a prime dimension drop algebra.

The Jiang-Su algebra is constructed as below.
We let $A_1=I[2,6,3]$. Suppose that a prime dimension drop
algebra $A_m=I[p_m,d_m,q_m]$ is chosen for some $m\geq 1$.
We construct $A_{m+1}$ and $\phi_{m,m+1}:A_m\rightarrow A_{m+1}$ as follows.

Choose $k_0^{(m)}$ and $k_1^{(m)}$ to be the first two prime numbers that are
greater than $2d_m$. This means that
$$
k_0^{(m)}>2p_m, ~~k_1^{(m)}>2q_m, ~~(k_0^{(m)}p_m,k_1^{(m)}q_m)=1.
$$
Let $p_{m+1}=k_0^{(m)}p_m$, $q_{m+1}=k_1^{(m)}q_m$, $d_{m+1}=p_{m+1}q_{m+1}$ and
$A_{m+1}=I[p_{m+1},d_{m+1},q_{m+1}]$. Obviously, $A_{m+1}$ is a prime
dimension drop algebra. Denote $k^{(m)}=k_0^{(m)}k_1^{(m)}$. Choose
$r_0^{(m)}$ such that
$$
0<r_0^{(m)}\leq q_{m+1}, ~~q_{m+1}|(k^{(m)}-r_0^{(m)}).
$$
Choose $r_1^{(m)}$ such that
$$
0<r_1^{(m)}\leq p_{m+1}, ~~p_{m+1}|(k^{(m)}-r_1^{(m)}).
$$
Define
\begin{equation*}
\xi_j^{(m)}(x)=
\begin{cases}
x/2,\qquad\quad 1\leq j\leq r_0^{(m)}\\
1/2,\qquad\quad r_0^{(m)}<j\leq k^{(m)}-r_1^{(m)}\\
(x+1)/2,\quad k^{(m)}-r_1^{(m)}<j\leq k^{(m)}
\end{cases}.
\end{equation*}
It follows that
\begin{equation*}
\xi_j^{(m)}(0)=
\begin{cases}
0,\qquad 1\leq j\leq r_0^{(m)}\\
1/2,\qquad r_0^{(m)}<j\leq k^{(m)}
\end{cases}, and
\end{equation*}

\begin{equation*}
\xi_j^{(m)}(1)=
\begin{cases}
1/2,\qquad 1\leq j\leq k^{(m)}-r_1^{(m)}\\
1,\qquad k^{(m)}-r_1^{(m)}<j\leq k^{(m)}
\end{cases}.
\end{equation*}

Obviously, we have
$$
r_0^{(m)}q_m\equiv k^{(m)}q_m=k_0^{(m)}q_{m+1}\equiv0,~~~(mod ~~~q_{m+1}).
$$
It follows that $q_{m+1}|r_0^{(m)}q_m$.
Notice that $q_{m+1}|(k^{(m)}-r_0^{(m)})$, there exists a unitary
$u_0\in M_{d_{m+1}}$ such that
$$
\rho_0(f)=u_0^*
\begin{bmatrix}
f(\xi_1^{(m)}(0))&0&\cdots&0\\
0&f(\xi_2^{(m)}(0))&\cdots&0\\
\vdots&\vdots&&\vdots\\
0&0&\cdots&f(\xi_{k^{(m)}}^{(m)}(0))	
\end{bmatrix}u_0,~~~\text{for all}~~ f\in A_m,
$$
defines a morphism $\rho_0:A_m\rightarrow M_{p_{m+1}}\otimes {\bf 1}_{q_{m+1}}\subset M_{p_{m+1}}\otimes M_{q_{m+1}}$.

On the other hand, we have
$$
p_{m+1}|r_1^{(m)}p_m,~~~p_{m+1}|(k^{(m)}-r_1^{(m)}).
$$
There exists a unitary $u_1\in M_{d_{m+1}}$ such that
$$
\rho_1(f)=u_1^*
\begin{bmatrix}
f(\xi_1^{(1)}(1))&0&\cdots&0\\
0&f(\xi_2^{(2)}(1))&\cdots&0\\
\vdots&\vdots&&\vdots\\
0&0&\cdots&f(\xi_{k^{(m)}}^{(m)}(1))	
\end{bmatrix}u_1,~~~\text{for all}~~ f\in A_m,
$$
defines a morphism $\rho_1: A_m\rightarrow {\bf 1}_{p_{m+1}}\otimes M_{q_{m+1}}\subset M_{p_{m+1}}\otimes M_{q_{m+1}}$.

Let $u$ be any continuous path of unitaries in $M_{d_{m+1}}$
connecting $u_0$ and $u_1$ and let $\phi_{m,m+1}$ be given
as follows
$$
\phi_{m,m+1}(f)=u^*
\begin{bmatrix}
f\circ\xi_1^{(m)}&0&\cdots&0\\
0&f\circ\xi_2^{(m)}&\cdots&0\\
\vdots&\vdots&&\vdots\\
0&0&\cdots&f\circ\xi^{(m)}_{k^{(m)}}	
\end{bmatrix}u,~~~\text{for all}~~ f\in A_m.
$$

\begin{thm}[\cite{JiangSu}, Proposition 2.5]\label{T:Z}
Jiang-Su algebra $\mathcal{Z}$ can be written  as the
limit $\mathcal{Z}=\lim_{n}(A_n,\phi_{n,n+1})$,
such that each connecting map $\phi_{m,n}=\phi_{n}\circ\phi_{n-1}\circ\cdots\circ\phi_{m+1}\circ\phi_m$
has the following form:
$$
\phi_{m,n}(f)=U^*
\begin{bmatrix}
f\circ\xi_1&0&\cdots&0\\
0&f\circ\xi_2&\cdots&0\\
\vdots&\vdots&\ddots&\vdots\\
0&0&\cdots&f\circ\xi_k
\end{bmatrix}U,
$$
where $U$ is a continuous path in $U(M_{d_n})$, $k=k^{(m)}k^{(m+1)}\cdots k^{(n-1)}$
and
$$
\xi_1\leq\xi_2\leq\cdots\leq \xi_k,
$$
In fact, each $\xi_j$ can be chosen from the following list:
$$
\xi(t)=\frac{l}{2^{n-m}}, ~~\text{where}~~l\in\mathbb{Z},~0<l<2^{n-m},
$$
or
$$
\xi(t)=\frac{t+l}{2^{n-m}}, ~~\text{where}~~l\in\mathbb{Z},~0<l<2^{n-m}.
$$
\end{thm}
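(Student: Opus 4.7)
The plan is to prove Theorem \ref{T:Z} by induction on $n-m$, using the explicit description of the generating connecting maps $\phi_{m,m+1}$ constructed just above the statement. The base case $n=m+1$ is essentially the definition: $\phi_{m,m+1}$ has the block-diagonal form with $\xi_j^{(m)}$ taking values $x/2$, $1/2$, or $(x+1)/2$, which are the three possibilities listed in the theorem for exponent $n-m=1$ (after allowing the boundary values $l=0$ or $l=2^{n-m}$ to be absorbed into $\xi(t)=l/2^{n-m}$ when needed, noting that at $t=0$ and $t=1$ the dimension drop conditions force these boundary cases to be consistent with constant blocks).

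For the inductive step, I would write $\phi_{m,n+1}=\phi_{n,n+1}\circ\phi_{m,n}$ and compute explicitly. Suppose $\phi_{m,n}(f)=V^*\mathrm{diag}(f\circ\eta_1,\ldots,f\circ\eta_{k'})V$ is already in the desired form, with each $\eta_i$ either $l/2^{n-m}$ or $(t+l)/2^{n-m}$. Apply $\phi_{n,n+1}$, which has the form $W^*\mathrm{diag}(g\circ\xi_1^{(n)},\ldots,g\circ\xi_{k^{(n)}}^{(n)})W$. Then
\begin{equation*}
\phi_{m,n+1}(f)=\widetilde{W}^*\,\mathrm{diag}\bigl(f\circ(\eta_i\circ\xi_j^{(n)})\bigr)_{i,j}\,\widetilde{W},
\end{equation*}
where $\widetilde{W}$ is a continuous path of unitaries obtained by combining $W$ with the pullback of $V$ through the block-diagonal form. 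The key computation is: each $\xi_j^{(n)}$ is one of $x/2$, $1/2$, $(x+1)/2$, and each $\eta_i$ is $l/2^{n-m}$ or $(t+l)/2^{n-m}$. Composition produces $\eta_i\circ\xi_j^{(n)}$ which is either constant of the form $l'/2^{n+1-m}$ (when $\eta_i$ is constant, or when $\xi_j^{(n)}$ is the constant $1/2$), or affine of the form $(t+l')/2^{n+1-m}$ (when both are nonconstant), with $l'$ computed by straightforward integer arithmetic. In each case the new index $l'$ lies in the required range.

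After obtaining all the $\xi_j$'s with the right form, reorder them in increasing order by conjugating by a constant permutation unitary, which can be absorbed into $\widetilde{W}$ without destroying the continuity of the path. The continuity of the composite $\widetilde{W}$ follows because the composition of continuous paths of unitaries is again a continuous path, and the partial isometries used to implement the block reordering at the endpoints $t=0$ and $t=1$ extend continuously over $[0,1]$ since each $A_n$ is a dimension drop algebra (the flexibility of the path $U$ in each $\phi_{m,m+1}$ is exactly what allows this).

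The main obstacle is bookkeeping: carefully verifying the index arithmetic $l'=lk^{(n)}+(\text{shift from }\xi_j^{(n)})$ and checking that $l'$ stays in the open range $(0,2^{n+1-m})$, while also maintaining the boundary conditions $f(0)\in M_{p_m}\otimes\mathbf{1}$ and $f(1)\in\mathbf{1}\otimes M_{q_m}$ after composition. The endpoint conditions are what determine how many of the $\xi_j^{(n)}$ are $x/2$ versus $(x+1)/2$ versus $1/2$ (governed by $r_0^{(m)}$ and $r_1^{(m)}$ in the construction), so one must verify these divisibility constraints pass correctly to the composite level; this is automatic from the choice $q_{m+1}\mid(k^{(m)}-r_0^{(m)})$ and $p_{m+1}\mid(k^{(m)}-r_1^{(m)})$ in the construction of $\phi_{m,m+1}$.
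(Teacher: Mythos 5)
The paper does not prove this result: it is quoted as Proposition~2.5 of \cite{JiangSu}, so there is no in-paper argument to compare against. Your inductive scheme --- peel off the generating map $\phi_{n,n+1}$, observe that the composite eigenvalue functions are $\eta_i\circ\xi_j^{(n)}$, compute that composing a constant or affine map of slope $1/2^{n-m}$ with one of $x/2$, $1/2$, $(x+1)/2$ produces a constant or affine map of slope $1/2^{n+1-m}$ with an integer shift, and absorb $\bigoplus_j V(\xi_j^{(n)}(y))$ and $W(y)$ into a single new continuous unitary path --- is the right way to derive the statement from the construction, and the arithmetic in the inductive step is sound.

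The base case as you wrote it, however, does not work, and the issue is substantive rather than cosmetic. The stated range $0<l<2^{n-m}$ for the affine family excludes $l=0$, yet the generating map $\phi_{m,m+1}$ already contains $\xi(x)=x/2=(x+0)/2^1$, which requires $l=0$. You propose to ``absorb $l=0$ or $l=2^{n-m}$ into $\xi(t)=l/2^{n-m}$,'' with the dimension-drop conditions forcing consistency; but $x/2$ is a nonconstant affine function, and the endpoint conditions on elements of $I[p_m,d_m,q_m]$ constrain the values $f(0),f(1)$, not the shape of $\xi$, so no absorption is possible. Worse, the $l=0$ affine case does not disappear in the induction: composing $\eta=(t+l)/2^{n-m}$ with $\xi_j^{(n)}(y)=y/2$ gives $(y+2l)/2^{n+1-m}$, so $l=0$ at level $n$ yields $l'=0$ at level $n+1$ forever. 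The genuine resolution is that the affine range should be $0\le l<2^{n-m}$ (almost certainly an off-by-one in the transcription of \cite{JiangSu}); a direct check for $n-m=2$ produces constants with $l\in\{1,2,3\}$ and affines with $l\in\{0,1,2,3\}$, exactly this. With that correction the base case is immediate and your induction closes.

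One step you assert but should justify: sorting the $\eta_i\circ\xi_j^{(n)}$ by a \emph{constant} permutation unitary works only because any two functions in the family $\{l/2^{N}\}\cup\{(t+l)/2^{N}\}$ (with $l$ an integer) either coincide, or are weakly ordered on all of $[0,1]$, touching at most at an endpoint $t=0$ or $t=1$; a strict interior crossing of a constant $c/2^N$ and an affine $(t+l)/2^N$ would require $c-l\in(0,1)$, impossible for integers. This is precisely why a single ordering is valid over the whole interval and a $y$-independent permutation suffices. Also, the continuity of the combined unitary follows at once from the explicit formula $\widetilde{W}(y)=\bigl(\bigoplus_j V(\xi_j^{(n)}(y))\bigr)W(y)$; there is no need to appeal to the ``flexibility of the path $U$'' here.
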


\begin{rem}\label{R:JSalgebra}
In the remark we will use the concept of sets with multiplicity.
We shall use $x^{\sim k}$ for a simplified notation for $\{x,x,\ldots,x\}$.
For example $\{x^{\sim2},y^{\sim3}\}=\{x,x,y,y,y\}$. As the construction of
$\mathcal{Z}$, we have
$$
\{\xi_1(0),\xi_2(0),\cdots,\xi_k(0)\}=\{{\frac{l}{2^{n-m}}}^{\sim j_l}\}_{l=0}^{2^{n-m}-1}
$$
where $q_n|j_l$ for all $0\leq l\leq 2^{n-m}-1$, and
$$
\{\xi_1(1),\xi_2(1),\cdots,\xi_k(1)\}=\{{\frac{l}{2^{n-m}}}^{\sim s_l}\}_{l=1}^{2^{n-m}}
$$
where $p_n|s_l$ for all $1\leq l\leq 2^{n-m}$.
\end{rem}

\begin{lem}\label{L:per}
Let $\mathcal{Z}=\lim(A_n,\phi_{n,n+1})$ be the Jiang-Su algebra, which
is defined as above. If $v\in A_n$ is a unitary and $u_s$ is a smooth path of unitaries
connecting $v$ and ${\bf 1_{A_n}}$, then for any $\eps>0$,
there exists another smooth path $v_s\in A_n$ of unitaries such that
\begin{enumerate}
\item $\|v_s-u_s\|<\eps$,
\item $|length(v_s)-length(u_s)|<\eps$,
\item $v_s(0)=exp(2\pi i \sum_{j=1}^{p_n}h_ja_j)\otimes {\bf 1}_{q_n}$, 
$\{a_j\}_{j=1}^{p_n}$ is a set of mutually orthogonal
    rank one projections in $C([0,1],M_{p_n})$ and
    $h_j\in C([0,1])_{s.a.}$, $exp(2\pi i h_j(s))\neq exp(2\pi i h_k(s))$ for each $j\neq k$ and $s\in[0,1]$.
\item $v_s(1)=exp(2\pi i \sum_{j=1}^{q_n}g_jb_j)\otimes {\bf 1}_{p_n}$, 
$\{b_j\}_{j=1}^{q_n}$ is a set of mutually orthogonal
    rank one projections in $C([0,1],M_{q_n})$ and
    $g_j\in C([0,1])_{s.a.}$, $exp(2\pi i g_j(s))\neq exp(2\pi i g_k(s))$ for each $j\neq k$ and $s\in[0,1]$.

\end{enumerate}	
\end{lem}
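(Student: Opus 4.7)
The plan is to reduce the problem to perturbing the two boundary traces $s\mapsto u_s(0)\in U(M_{p_n})$ and $s\mapsto u_s(1)\in U(M_{q_n})$ so that each matrix in the family has simple spectrum, and then to glue these perturbations to $u_s$ via smooth cutoffs supported near $t=0$ and $t=1$. The key geometric input is Lemma \ref{L:codimension}: the set $Z_k\subset U(M_k(\mathbb{C}))$ of unitaries with repeated eigenvalues is a finite union of smooth submanifolds of codimension at least three, so generic smooth one-parameter families avoid it.

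Concretely, the dimension-drop condition lets me write $u_s(0)=\tilde{a}_s\otimes\mathbf{1}_{q_n}$ with $\tilde{a}_s\in U(M_{p_n})$, and $u_s(1)=\mathbf{1}_{p_n}\otimes\tilde{b}_s$ with $\tilde{b}_s\in U(M_{q_n})$. A standard $C^1$-transversality argument (jet transversality applied to the $1$-jet of the path into $U(M_{p_n})$, whose target avoids the codimension $\geq 3$ set $Z_{p_n}$) yields smooth perturbations $a_s$ of $\tilde{a}_s$ and $b_s$ of $\tilde{b}_s$ with distinct eigenvalues for every $s\in[0,1]$, while $\|a_s-\tilde{a}_s\|_{C^1}$ and $\|b_s-\tilde{b}_s\|_{C^1}$ are as small as desired. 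Using the principal logarithm, write $a_s\tilde{a}_s^{*}=\exp(iX_s)$ and $\tilde{b}_s^{*}b_s=\exp(iY_s)$ with small self-adjoint $X_s\in M_{p_n}$, $Y_s\in M_{q_n}$ depending smoothly on $s$. Fix smooth cutoffs $\eta_0,\eta_1:[0,1]\to[0,1]$ with $\eta_0(0)=1$, $\eta_1(1)=1$, both vanishing on $[1/3,2/3]$, and set
\[
v_s(t)=\exp\bigl(i\eta_0(t)\,X_s\otimes\mathbf{1}_{q_n}\bigr)\cdot u_s(t)\cdot \exp\bigl(i\eta_1(t)\,\mathbf{1}_{p_n}\otimes Y_s\bigr).
\]
Then $v_s(t)\in A_n$ (by checking the boundary conditions at $t=0$ and $t=1$), $v_s(0)=a_s\otimes\mathbf{1}_{q_n}$, and $v_s(1)=\mathbf{1}_{p_n}\otimes b_s$. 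Conditions (1) and (2) follow from the uniform $C^1$-smallness of the outer factors, which contribute $O(\|X_s\|+\|Y_s\|)$ both to $\|v_s-u_s\|$ and to $\|\partial_s v_s-\partial_s u_s\|$.

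For (3) and (4), since $a_s$ has $p_n$ distinct eigenvalues for every $s$, the spectral projections depend continuously on $s$ and yield continuous rank-one projections $a_j\in C([0,1],M_{p_n})$ with $\sum_j a_j=\mathbf{1}_{p_n}$. The eigenvalue curves $s\mapsto\lambda_j(s)\in S^1$ are continuous and pairwise non-crossing, so they lift to continuous real-valued functions $h_j\in C([0,1])$ satisfying $\exp(2\pi i h_j(s))=\lambda_j(s)$ (the lift exists because $[0,1]$ is simply connected); this gives the required expression for $v_s(0)$, and (4) is analogous. The main obstacle is making the first step a $C^1$-perturbation rather than merely $C^0$, since the length estimate (2) involves derivatives: this is a standard consequence of jet transversality applied to $s\mapsto j^1(\tilde{a}_s)$, but requires care because the original path may intersect the singular strata of $Z_{p_n}$ non-transversally and must be deformed smoothly along those strata as well. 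A secondary technical point is ensuring the principal logarithm is well defined on $a_s\tilde{a}_s^{*}$ uniformly in $s$, which is arranged by shrinking the preliminary perturbation.
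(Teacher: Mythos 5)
Your proposal is essentially correct and reaches the same conclusion by a genuinely different gluing mechanism, so let me compare the two. Both you and the paper begin by restricting to the boundary traces $s\mapsto u_s(0)\in U(M_{p_n})\otimes\mathbf 1_{q_n}$ and $s\mapsto u_s(1)\in\mathbf 1_{p_n}\otimes U(M_{q_n})$ and perturbing them in $C^1$ so that the resulting families have simple spectrum. Where the paper simply invokes Lin's Lemma 4.1 of \cite{lin} for this step, you prove it directly from Lemma~\ref{L:codimension} by a transversality argument; this is a fair unpacking, though your appeal to ``jet transversality'' is more than you need: since $Z_{p_n}$ is a finite union of strata each of codimension at least three, the plain Thom transversality theorem already forces a generic one-parameter family to miss every stratum, with perturbations small in the $C^1$ (indeed $C^\infty$) topology; the worry about non-transversal intersections does not arise.

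The real difference is the interpolation step. The paper extends the $t$-interval to $[-\delta,1+\delta]$, appends exponential arcs from $u_s(0)$ to $v^{(0)}(s)$ and from $u_s(1)$ to $v^{(1)}(s)$, and reparametrizes the whole thing back to $[0,1]$; the result is only piecewise smooth in $t$ and the $C^0$ and length estimates require a small-increment argument on each of the three pieces. Your construction
\[
v_s(t)=\exp\bigl(i\eta_0(t)\,X_s\otimes\mathbf 1_{q_n}\bigr)\,u_s(t)\,\exp\bigl(i\eta_1(t)\,\mathbf 1_{p_n}\otimes Y_s\bigr)
\]
produces a single closed formula, stays inside $A_n$ because $\eta_1(0)=\eta_0(1)=0$ keep the correct tensor slots at the endpoints, yields $v_s(0)=a_s\otimes\mathbf 1_{q_n}$ and $v_s(1)=\mathbf 1_{p_n}\otimes b_s$ exactly, and the estimates (1)–(2) fall out of the triangle inequality because $X_s,Y_s\to 0$ in $C^1$. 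It is cleaner and genuinely smooth in $t$, at the modest cost of making sure $\eta_0$ is supported in $[0,1/3]$ and $\eta_1$ in $[2/3,1]$ (not just vanishing on $[1/3,2/3]$) so that the unwanted factor drops out at the opposite endpoint. Both routes then extract the projections $a_j,b_j$ and the real-valued lifts $h_j,g_j$ from the simple spectra exactly as you describe, using that $[0,1]$ is simply connected. In short: same reduction, same codimension input, different and somewhat tidier gluing; your version has the small advantage of delivering a path that is honestly smooth rather than piecewise smooth.
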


\begin{proof}
For any $0<\eps<1$, we can find
$\delta>0$ such that $\|u_s(t_1)-u_s(t_2)\|<\frac{\eps}{2}$ and
$\|\frac{du_s}{ds}(t_1)-\frac{du_s}{ds}(t_2)\|<\frac{\eps}{3}$ for any $s\in[0,1]$
and $t_1,t_2\in[0,1]$ with $|s_1-s_2|<\delta$. 

Since $u_s\in A_n$, then we can write $u_s(0)$ as 
$u_s(0)=\gamma^{(0)}(s)\otimes {\bf 1}_{q_n}$, where
$\gamma^{(0)}$ is a unitary in $C([0,1], M_{p_n})$. By Lemma 4.1 in \cite{lin},
we can find a set of mutually orthogonal rank one projections $\{a_j\}_{j=1}^{p_n}$ in $C([0,1], M_{p_n})$ and $h_j\in C([0,1])_{s.a.}$ with $exp(2\pi i h_j(s))\neq exp(2\pi i h_k(s))$ for each $j\neq k$ and $s\in[0,1]$ such that
$$
\|\gamma^{(0)}(s)-{\overline{\gamma}}^{(0)}(s)\|<\frac{\eps}{6(1+\max_{s\in[0,1]}\|\frac{du_s(0)}{ds}\|)}, ~~\|\frac{d \gamma^{(0)}(s)}{d s}-\frac{d {\overline{\gamma}}^{(0)}(s)}{ds}\|<\frac{\eps}{3},
~~\text{ for all}~s\in[0,1],
$$
where ${\overline{\gamma}}^{(0)}(s)=exp(2\pi i\sum_{j=1}^{p_n}h_ja_j)$.

On the other hand, we can write $u_s(1)$ as 
$u_s(1)=\gamma^{(1)}(s)\otimes {\bf 1}_{p_n}$, where
$\gamma^{(1)}$ is a unitary in $C([0,1], M_{q_n})$. By Lemma 4.1 in \cite{lin},
we can find a set of mutually orthogonal rank one projections $\{b_j\}_{j=1}^{q_n}$ in $C([0,1], M_{q_n})$ and $g_j\in C([0,1])_{s.a.}$ with $exp(2\pi i g_j(s))\neq exp(2\pi i g_k(s))$ for each $j\neq k$ and $s\in[0,1]$ such that
$$
\|\gamma^{(1)}(s)-{\overline{\gamma}}^{(1)}(s)\|<\frac{\eps}{6(1+\max_{s\in[0,1]}\|\frac{du_s(1)}{ds}\|)},~~ \|\frac{d \gamma^{(1)}(s)}{d s}-\frac{d {\overline{\gamma}}^{(1)}(s)}{ds}\|<\frac{\eps}{3},
~~\text{ for all}~s\in[0,1],
$$
where ${\overline{\gamma}}^{(1)}(s)=exp(2\pi i\sum_{j=1}^{q_n}g_jb_j)$.

We denote $v^{(0)}(s)={\overline{\gamma}}^{(0)}(s)\otimes {\bf 1}_{q_n}$
and $v^{(1)}(s)={\overline{\gamma}}^{(1)}(s)\otimes {\bf 1}_{p_n}$.
Then 
$$
\|v^{(0)}(s)-u_s(0)\|=
\|(\gamma^{(0)}(s)-{\overline{\gamma}}^{(0)}(s))\otimes {\bf 1}_{q_n}\|<\frac{\eps}{6(1+\max_{s\in[0,1]}\|\frac{du_s(0)}{ds}\|)},
$$
and
$$
\|v^{(1)}(s)-u_s(1)\|=
\|(\gamma^{(1)}(s)-{\overline{\gamma}}^{(1)}(s))\otimes {\bf 1}_{p_n}\|<\frac{\eps}{6(1+\max_{s\in[0,1]}\|\frac{du_s(1)}{ds}\|)},
$$
for all $s\in[0,1]$. Also, we have
$$
\|\frac{dv^{(0)}(s)}{ds}-\frac{du_s(0)}{ds}\|<\frac{\eps}{3},
$$
and
$$
\|\frac{dv^{(1)}(s)}{ds}-\frac{du_s(1)}{ds}\|<\frac{\eps}{3}.
$$

Since $\|u_s^*(0)v^{(0)}(s)-{\bf 1}_{A_n}\|=\|v^{(0)}(s)-u_s(0)\|<\frac{\eps}{6}<\frac{1}{6}$,
there exists $H\in M_{d_n}(C([0,1]))_{s.a.}$ with $\|H\|<1$ such that 
$u_s^*(0)v^{(0)}(s)=exp(2\pi i H(s))$. Also, there exists $G\in M_{d_n}(C([0,1]))_{s.a.}$ with $\|G\|<1$ such that $u_s^*(1)v^{(1)}(s)=exp(2\pi i G(s))$. In fact, $H(s)=\frac{1}{2\pi i}\log(u_s^*(0)v^{(0)}(s))$ and
$G(s)=\frac{1}{2\pi i}\log(u_s^*(1)v^{(1)}(s))$.

We denote
\begin{equation*}
w(s,t)=
\begin{cases}
u_s(0)exp(2\pi i\frac{-t}{\delta}H(s)),\qquad -\delta\leq t<0\\
u_s(t),\qquad\qquad\qquad\qquad\quad 0\leq t\leq 1\\
u_s(1)exp(2\pi i\frac{t-1}{\delta}G(s))\quad \quad1<t\leq 1+\delta
\end{cases}.
\end{equation*}

Let $v_s(t)=w(s,(1+2\delta)t-\delta)$ for $(s,t)\in[0,1]\times[0,1]$.
Then $v_s$ is a path in $A_n$ and it satisfies conditions (3) and (4).

For $t\in[0,\frac{\delta}{1+2\delta})$, by the choice of $\delta$,
we have
\begin{align*}
\|v_s(t)-u_s(t)\|&=\|u_s(0)exp(2\pi i\frac{\delta-(1+2\delta)t}{\delta}H(s))-u_s(t)\|\\
&\leq\|u_s(0)exp(2\pi i\frac{\delta-(1+2\delta)t}{\delta}H(s))-u_s(0)\|+\|u_s(0)-u_s(t)\|\\
&=\|exp(2\pi i\frac{\delta-(1+2\delta)t}{\delta}H(s))-{\bf 1}_{A_n}\|+\|u_s(0)-u_s(t)\|\\
&\leq\|exp(2\pi iH(s))-{\bf 1}_{A_n}\|+\|u_s(0)-u_s(t)\|\\
&< \frac{\eps}{6}+\frac{\eps}{2}\\
&<\eps.
\end{align*}

For $t\in[\frac{\delta}{1+2\delta}, \frac{1+\delta}{1+2\delta}]$, we have
$|(1+2\delta)t-\delta-t|<\delta$ and hence
\begin{align*}
\|v_s(t)-u_s(t)\|=\|u_s((1+2\delta)t-\delta)-u_s(t)\|<\frac{\eps}{2}.
\end{align*}

For $t\in(\frac{1+\delta}{1+2\delta},1]$, by the choice of $\delta$,
we have
\begin{align*}
\|v_s(t)-u_s(t)\|&=\|u_s(1)exp(2\pi i\frac{(1+2\delta)t-\delta-1}{\delta}G(s))-u_s(t)\|\\
&\leq\|u_s(1)exp(2\pi i\frac{(1+2\delta)t-\delta-1}{\delta}G(s))-u_s(1)\|+\|u_s(1)-u_s(t)\|\\
&= \|exp(2\pi i\frac{(1+2\delta)t-\delta-1}{\delta}G(s))-{\bf 1}_{A_n}\|-\|u_s(1)-u_s(t)\|\\
&= \|exp(2\pi iG(s))-{\bf 1}_{A_n}\|-\|u_s(1)-u_s(t)\|\\
&< \frac{\eps}{6}+\frac{\eps}{2}\\
&<\eps.
\end{align*}

It follows that $\|v_s-u_s\|\leq\eps$.

For $t\in [0,\frac{\delta}{1+2\delta})$, a direct calculation shows that

\begin{align*}
|\|\frac{dv_s}{ds}\|-\|\frac{du_s}{ds}\||&=|\sup_{t\in[0,1]}\|u_s(0)exp(2\pi i\frac{\delta-(1+2\delta)t}{\delta}H(s))2\pi i\frac{\delta-(1+2\delta)t}{\delta}\frac{dH(s)}{ds}\\
&+\frac{d u_s(0)}{ds}exp(2\pi i\frac{\delta-(1+2\delta)t}{\delta}H(s))\|-\sup_{t\in[0,1]}\|\frac{du_s(t)}{ds}\||\\
&\leq\sup_{t\in[0,1]}\|u_s(0)exp(2\pi i\frac{\delta-(1+2\delta)t}{\delta}H(s))2\pi i\frac{\delta-(1+2\delta)t}{\delta}\frac{dH(s)}{ds}\|\\
&+\sup_{t\in[0,1]}\|\frac{d u_s(0)}{ds}exp(2\pi i\frac{\delta-(1+2\delta)t}{\delta}H(s))-\frac{du_s(t)}{ds}\|\\
&\leq 2\pi\|\frac{dH(s)}{ds}\|+\sup_{s\in[0,1]}\|\frac{du_s(0)}{ds}-\frac{du_s(t)}{ds}\|+\|\frac{du_s(0)}{ds}\|\sup_{t\in[0,1]}\|1-exp(2\pi i\frac{\delta-(1+2\delta)t}{\delta}H(s))\|\\
& \leq \|-u_s^*(0)\frac{du_s(0)}{ds}u_s^*(0)v^{(0)}(s)+u_s^*(0)\frac{dv^{(0)}(s)}{ds}\|+\frac{\eps}{3}+\|\frac{du_s(0)}{ds}\|\|1-exp(2\pi i H(s))\|\\
&\leq \|\frac{du_s(0)}{ds}-\frac{dv^{(0)}(s)}{ds}\|+2\|\frac{du_s(0)}{ds}\|\|u_s^*(0)v^{(0)}(s)-{\bf 1}_{A_n}\|+\frac{\eps}{3}\\
&<\frac{\eps}{3}+\frac{\eps}{3}+\frac{\eps}{3}\\
&=\eps.
\end{align*}

With a similar calculation, one have
$$
|\|\frac{dv_s}{ds}\|-\|\frac{du_s}{ds}\||<\eps, ~\text{ for all}~t\in[\frac{1+\delta}{1+2\delta},1].
$$

For $t\in[\frac{\delta}{1+2\delta},\frac{1+\delta}{1+2\delta}]$, we have
$|(1+2\delta)t-\delta-t|<\delta$ and hence
\begin{align*}
|\|\frac{dv_s(t)}{ds}\|-\|\frac{du_s(t)}{ds}\||\leq\|\frac{du_s((1+2\delta)t-\delta)}{ds}-\frac{du_s(t)}{ds}\|<\frac{\eps}{3}.
\end{align*}

It follows that $|\|\frac{dv_s}{ds}\|-\|\frac{du_s}{ds}\||<\eps$
for all $s\in[0,1]$. We have
$$
|length_s(v_s)-length_s(u_s)|=|\int_0^1\|\frac{dv_s}{ds}\|ds-\int_0^1\|\frac{du_s}{ds}\|ds|<\eps.
$$
\end{proof}

\begin{rem}\label{R:per}
In above Lemma, we can assume that $h_1(0),h_2(0),\cdots,h_{p_n}(0)\in[0,1]$,
$g_1(0),g_2(0)$, $\cdots,g_{q_n}(0)\in[0,1]$ and
$$
h_1(s)<h_2(s)<\cdots<h_{p_n}(s),
$$	
$$
g_1(s)<g_2(s)<\cdots<g_{q_n}(s)
$$
for all $s\in[0,1]$. Notice that $exp(2\pi i h_j(s))\neq exp(2\pi h_k(s))$,
$exp(2\pi ig_j(s))\neq exp(2\pi ig_k(s))$ for any $j\neq k$ and $s\in[0,1]$,
we have
$$
h_{p_n}(s)-h_1(s)<1, g_{q_n}(s)-g_1(s)<1
$$
for all $s\in[0,1]$.
 \end{rem}

\begin{lem}\label{L:eigenlist}
Let $\mathcal{Z}=\lim(A_n,\phi_{n,n+1})$ be the Jiang-Su algebra, which
is defined as above. If $v\in A_n$ is a unitary and $u_s$ is a path of unitaries
connecting $v$ and ${\bf 1_{A_n}}$, then for any $\eps>0$,
there exists another path $v_s\in A_n$ of unitaries such that
\begin{enumerate}
\item $\|v_s-u_s\|<\eps$,
\item $|length(v_s)-length(u_s)|<\eps$,
\item $v_s(t)=exp(2\pi i H_s(t))$, $H_s(t)=\sum_{j=1}^{d_n}\lambda_s^j(t)p_j(s,t)$,
$\{p_j\}_{j=1}^{d_n}$ is a set of mutually orthogonal
    rank one projections in $C([0,1]\times[0,1],M_{d_n})$ and
    $h_j\in C([0,1]\times[0,1])_{s.a.}$, $exp(2\pi i\lambda_s^j(t))\neq exp(2\pi i\lambda_s^k(t))$ for each $j\neq k$ and $(s,t)\in(0,1)\times[0,1]$.
\end{enumerate}	
\end{lem}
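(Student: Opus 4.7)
The plan is to combine Lemma \ref{L:per} (which controls the boundary behaviour at $t=0$ and $t=1$) with the perturbation machinery of Proposition \ref{L:pertur} and Remark \ref{R:pertur} applied to $w_s$ viewed inside $U(M_{d_n}(C([0,1])))$, and then use Proposition \ref{R:remark} together with the discussion following it to extract continuous eigenvalue functions and continuous rank-one spectral projections on the square $[0,1]\times[0,1]$.

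First I would apply Lemma \ref{L:per} to $u_s$ with tolerance $\eps/2$ to produce a smooth path $w_s\in A_n$ with $\|w_s-u_s\|<\eps/2$, $|length(w_s)-length(u_s)|<\eps/2$, and with the endpoint descriptions (3) and (4) of that lemma. In particular, using Remark \ref{R:per} we may arrange that at $t=0$ (resp. $t=1$) the unitary $w_s(0)$ (resp. $w_s(1)$) decomposes with $p_n$ (resp. $q_n$) distinct eigenvalues across $s\in[0,1]$, and the corresponding spectral projections are rank $q_n$ (resp. rank $p_n$) projections of the form $a_j\otimes{\bf 1}_{q_n}$ (resp. ${\bf 1}_{p_n}\otimes b_j$) that depend continuously on $s$.

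Next, regarding $w_s$ as a path in $U(M_{d_n}(C([0,1])))$, I would apply Proposition \ref{L:pertur} to perturb $w_s$ on the central strip $[0,1]\times[\delta,1-\delta]$ into a path $F_s$ with $\|F_s-w_s\|<\eps/4$, $|length(F_s)-length(w_s)|<\eps/4$, and $F_s(t)$ having $d_n$ distinct eigenvalues for all $(s,t)\in[0,1]\times[\delta,1-\delta]$. On the collars $t\in[0,\delta]$ and $t\in[1-\delta,1]$ I would interpolate smoothly back to $w_s(0)$ and $w_s(1)$, using that each rank-$q_n$ spectral projection $a_j\otimes{\bf 1}_{q_n}$ at $t=0$ can be split continuously into $q_n$ rank-one projections with slightly separated eigenvalues as $t$ moves into $(0,\delta]$ (and similarly at $t=1$), since such a splitting problem is trivial over the contractible parameter space $[0,1]\times[0,\delta]$. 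Choosing $\delta$ small and the splitting perturbation small, the triangle inequality then yields the overall bounds $\|v_s-u_s\|<\eps$ and $|length(v_s)-length(u_s)|<\eps$ when we set $v_s:=F_s$.

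Finally, on the interior where $F_s(t)$ has distinct eigenvalues I would apply Proposition \ref{R:remark}: since all projections in $M_{d_n}(C([0,1]\times[0,1]))$ are trivial, I obtain continuous rank-one spectral projections $p_j(s,t)$ and continuous eigenvalue functions $f_j:[0,1]\times[0,1]\to S^1$ with $F_s(t)=\sum_j f_j(s,t)p_j(s,t)$, and then continuous logarithms $\lambda_s^j(t)$ on the simply connected square with $\exp(2\pi i\lambda_s^j(t))=f_j(s,t)$, giving $H_s(t)=\sum_j\lambda_s^j(t)p_j(s,t)$ as desired. The main obstacle is the third step of the perturbation: Proposition \ref{L:pertur} is stated for full matrix algebras $M_k(C([0,1]))$, whereas $A_n=I[p_n,d_n,q_n]$ has a dimension-drop constraint at $t=0,1$ which forces repeated eigenvalues there. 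The collar splitting described above is precisely what is needed to reconcile the transversality-based perturbation (which demands distinct eigenvalues) with the algebraic constraint at the boundary, and this is where the careful use of Lemma \ref{L:per} pays off.
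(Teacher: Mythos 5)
Your plan is correct in outline and uses the same basic ingredients as the paper's proof: normalize the boundary behaviour at $t=0,1$ with Lemma \ref{L:per} (and Remark \ref{R:per}), perturb the interior to obtain distinct eigenvalues via Proposition \ref{L:pertur}, split the rank-$q_n$ (resp.\ rank-$p_n$) spectral projections into rank-one projections on a collar, and then lift the eigenvalue functions to real-valued logarithms using Proposition \ref{R:remark} and the simple connectivity of $[0,1]\times[0,1]$. You also correctly identify the central difficulty, namely the clash between the dimension-drop constraint (forced repeated eigenvalues at $t=0,1$) and the transversality perturbation (which produces distinct eigenvalues).

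There is, however, one structural difference between your plan and the paper's proof that is worth flagging, since it affects how cleanly the gluing can be carried out. You apply Proposition \ref{L:pertur} to $w_s|_{[\delta,1-\delta]}$ \emph{first} and then construct collars joining the perturbed path $F_s(\delta)$ back to $w_s(0)$. Because $w_s(\delta)$ need not have distinct eigenvalues, Remark \ref{R:pertur} cannot be invoked to pin down $F_s(\delta)$; so the value at $t=\delta$ after perturbation is only sup-close to $w_s(\delta)$, and the collar you must build has an endpoint you do not explicitly control. This makes verifying both the distinct-eigenvalue condition on the collar and the estimate $|length(v_s)-length(u_s)|<\eps$ more delicate. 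The paper instead builds the collar \emph{before} perturbing: it first replaces $u_s$ by $\widetilde u_s$, equal to $u_s$ (reparametrized) on $[2\delta,1-2\delta]$ and given near $t=0$ by explicit eigenvalue interpolations $\tfrac{\delta-t}{\delta}f_j(s)+\tfrac{t}{\delta}h_j(s)$ (and their reflection), with the $h_j$ strictly ordered. Since $\widetilde u_s(\delta)$ and $\widetilde u_s(1-\delta)$ then have distinct eigenvalues, Remark \ref{R:pertur} applies and the interior perturbation can be taken with \emph{fixed} endpoints at $t=\delta,1-\delta$. This pins the seam exactly, makes the length estimate transparent (the $s$-derivatives of the collar eigenvalues are controlled via Lemma \ref{L:per}), and lets the final $\lambda^j_s$ be written piecewise in terms of the explicit collar functions, the lifted functions $\psi_j$ on $[0,1]\times[\delta,1-\delta]$, and integer shifts $m_j,l_j$ that make the pieces agree at $t=\delta,1-\delta$. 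Your ordering can be made to work, but you would need to argue carefully that $F_s(\delta)$ is $C^1$-close to $w_s(\delta)$ (not merely sup-close) and that the resulting collar interpolation has both distinct eigenvalues for $t\in(0,\delta]$ and controlled $s$-variation; the paper's ordering avoids this extra work.

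Finally, a cosmetic remark: the distinctness in item (3) should be required for $(s,t)\in[0,1]\times(0,1)$, not $(0,1)\times[0,1]$ (at $t=0,1$ the dimension-drop condition forces repeated eigenvalues for every $s$), as the paper's proof actually produces and as its concluding line states; this is a typo in the statement rather than a defect of either proof.
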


\begin{proof}
For any $0<\eta<1$, since $u_s(t)$ is uniformly continuous on
$[0,1]\times [0,1]$, there exists $\delta_1>0$ such that
$$
\|u_s(t_1)-u_s(t_2)\|<\frac{\eta}{4}
$$
for all $s\in[0,1]$ and $t_1,t_2\in [0,1]$ with $|t_1-t_2|<4\delta_1$.
Since $u_s(0), u_s(1)\in C([0,1])\otimes M_{d_n}$, there are continuous
functions $f_i,g_i:[0,1]\rightarrow \mathbb{C}$, $i=1,2\cdots, d_n$
such that $f_1(s),f_2(s),\cdots,f(s)\}$ are the eigenvalues
for $v_s(0)$ and $\{g_1(s),g_2(s),\cdots,g_{d_n}(s)\}$ are the
eigenvalues for $v_s(1)$, respectively. By Lemma \ref{L:per},
without loss of generality, we may assume that $u_s(0)$
and $u_s(1)$ can be written as the following forms:
$$
u_s(0)=U^{(0)}(s)
\begin{bmatrix}
exp(2\pi if_1(s))&&&\\
&exp(2\pi if_2(s))&&\\
&&\ddots&\\
&&&exp(2\pi if_{d_n}(s))
\end{bmatrix}
(U^{(0)}(s))^*~~\text{for all}~s\in[0,1],
$$
and
$$
u_s(1)=U^{(1)}(s)
\begin{bmatrix}
exp(2\pi ig_1(s))&&&\\
&exp(2\pi ig_2(s))&&\\
&&\ddots&\\
&&&exp(2\pi ig_{d_n}(s))	
\end{bmatrix}
(U^{(1)}(s))^*~~\text{for all}~ s\in[0,1],
$$
where $U^{(0)}$, $U^{(1)}$ are unitaries in $C([0,1])\otimes M_{d_n}$.
By Remark \ref{R:per} , we can assume that $f_1(0),f_2(0),\cdots,f_{d_n}(0)\in[0,1]$,
$g_1(0),g_2(0),\cdots,g_{d_n}(0)\in[0,1]$
and 
$$
f_1(s)\leq f_2(s)\leq\cdots\leq f_{d_n}(s),
$$
$$
g_1(s)\leq g_2(s)\leq \cdots\leq g_{d_n}(s),
$$
for all $s\in[0,1]$. Further, we can assume that
$f_{d_n}(s)-f_1(s)<1$ and $g_{d_n}(s)-g_1(s)<1$ 
for all $s\in[0,1]$.

Let $0<\delta<\delta_1$ be such that
$\frac{2\delta}{1-4\delta}<\delta_1$. We choose 
$h_1,h_2,\cdots,h_{d_n}\in C([0,1])_{s.a.}$
with $h_1(s)<h_2(s)<\cdots<h_{d_n}(s)$ with $h_{d_n}(s)-h_{1}(s)<1$ for
$s\in[0,1]$ such that $\|exp(2\pi ih_j(s))-exp(2\pi if_j(s))\|<\frac{\eta}{4}$
for $1\leq j\leq d_n$ and $s\in[0,1]$. 

Also we can choose
$k_1,k_2,\cdots,k_{d_n}\in C([0,1])_{s.a.}$
with $k_1(s)< k_2(s)<\cdots< k_{d_n}(s)$ for $s\in[0,1]$
such that $\|exp(2\pi ik_j(s))-exp(2\pi ig_j(s))\|<\frac{\eta}{4}$
for $1\leq j\leq d_n$ and $s\in[0,1]$.

We define a new path $\widetilde{u}_s$ as follows:
\begin{equation*}
\widetilde{u}_s(t)=
\begin{cases}
U^{(0)}(s)diag[exp( 2\pi i(\frac{\delta-t}{\delta}f_j(s)+\frac{t}{\delta}h_j(s)))]_{j=1}^{d_n}(U^{(0)}(s))^*,~~~~~~~~~~~~~~~~~~t\in[0,\delta]\\
U^{(0)}(s)diag[exp(2\pi i(\frac{t-\delta}{\delta}f_j(s)+\frac{2\delta-t}{\delta}h_j(s)))]_{j=1}^{d_n}(U^{(0)}(s))^*,~~~~t\in(\delta,2\delta]\\
u_s(\frac{t-2\delta}{1-4\delta}),t\in(2\delta,1-2\delta]\\
U^{(1)}(s)diag[exp(2\pi i(\frac{t-1+2\delta}{\delta}k_j(s)+\frac{1-\delta-t}{\delta}g_j(s)))]_{j=1}^{d_n}(U^{(1)}(s))^*,~~t\in(1-2\delta,1-\delta]
\\
U^{(1)}(s)diag[exp(2\pi i (\frac{t-1+\delta}{\delta}g_j(s)+\frac{1-t}{\delta}k_j(s)))]_{j=1}^{d_n}(U^{(1)}(s))^*,~~t\in(1-\delta,1]
\end{cases}.
\end{equation*}

As in the construction, it is easy to see that $\widetilde{u}_s$ is a path
of unitaries in $A_n$. We have $\widetilde{u}_s(t)$ has no repeat eigenvalues
for $(s,t)\in[0,1]\times(0,\delta]$ or $(s,t)\in[0,1]\times[1-\delta,1)$. Moreover, when $t\in[0,\delta]$,
we have
\begin{align*}
|\widetilde{u}_s(t)-u_s(t)|&\leq |\widetilde{u}_s(t)-\widetilde{u}_s(0)|+
|\widetilde{u}_s(0)-u_s(t)|\\
&\leq\max_{1\leq j\leq d_n}|exp(2\pi i \frac{t}{\delta}(h_j(s)-f_j(s)))-1|+|u_s(0)-u_s(t)|\\
&\leq \max_{1\leq j\leq d_n}|exp(2\pi i(h_j(s)-f_j(s)))-1|+\frac{\eta}{4}\\
&=\max_{1\leq j\leq dn}|exp(2\pi if_j(s))-exp(2\pi ih_j(s))|+\frac{\eta}{4}\\
&\leq\frac{\eta}{2}. 	
\end{align*}
For $t\in(\delta, 2\delta]$, we have
\begin{align*}
|\widetilde{u}_s(t)-u_s(t)|&\leq |\widetilde{u}_s(t)-\widetilde{u}_s(\delta)|+
|\widetilde{u}_s(\delta)-\widetilde{u}_s(2\delta)|+|\widetilde{u}_s(2\delta)-u_s(t)|\\
&\leq \max_{1\leq j\leq d_n}|exp(2\pi i\frac{t-\delta}{\delta
}(f_j(s)-h_j(s)))-1|+\frac{\eta}{4}+|u_s(0)-u_s(t)|\\
&\leq \max_{1\leq j\leq d_n}|exp(2\pi i(f_j(s)-h_j(s)))-1|+\frac{\eta}{4}+\frac{\eta}{4}\\
&=\max_{1\leq j\leq d_n}|exp(2\pi if_j(s))-exp(2\pi ih_j(s))|+\frac{\eta}{2}\\
&\leq\frac{3\eta}{4}.	
\end{align*}
In the same way, we have
$$
|\widetilde{u}_s(t)-u_s(t)|\leq \frac{3\eta}{4}~~\text{for all}~~ t\in(1-2\delta,1].
$$
Further, for $t\in[2\delta,1-2\delta]$, it is easy to see that
$$
|\frac{t-\delta}{1-4\delta}-t|<\frac{2\delta}{1-4\delta}<\delta_1.
$$
Hence
$$
|\widetilde{u}_s(t)-u_s(t)|=|u_s(\frac{t-2\delta}{1-4\delta})-u_s(t)|<
\frac{\eta}{4}.
$$
It follows that
$$
\|\widetilde{u}_s-u_s\|<\frac{3\eta}{4}, \forall s\in[0,1].
$$

In the construction of $\widetilde{u}_s$, it is easy to see that the lengths of $\widetilde{u}_s$ and $u_s$ are close enough if $\eta$ is small enough.

Notice that $\widetilde{u}_0|_{[\delta,1-\delta]}$,
$\widetilde{u}_1|_{[\delta,1-\delta]}$ are unitaries in
$M_{d_n}(C([\delta,1-\delta]))$ with distinct eigenvalues and
$\widetilde{u}_s|_{[\delta,1-\delta]}$ is a path connecting these two
elements. By Proposition \ref{L:pertur} and Remark \ref{R:pertur}, there exists another path
$\widetilde{\widetilde{u}}_s$ in $U(M_{d_n}(C[\delta, 1-\delta]))$ such that
$$
\|\widetilde{\widetilde{u}}_s-\widetilde{u}_s|_{[\delta,1-\delta]}\|<\frac{\eta}{2},
$$
$$
|length(\widetilde{\widetilde{u}}_s)-length(\widetilde{u}_s|_{[\delta,1-\delta]})|<\eta,
$$
$\widetilde{\widetilde{u}}_s$ has no repeat eigenvalues for all
$(s,t)\in[0,1]\times[\delta,1-\delta]$, and
$$
\widetilde{\widetilde{u}}_0(\delta)=\widetilde{u}_s(\delta),
\widetilde{\widetilde{u}}_0(1-\delta)=\widetilde{u}_s(1-\delta).\eqno{(5.1)}
$$
By Proposition \ref{R:remark}, there exsists a unitary $U_s(t)\in C([0,1]\times[\delta,1-\delta],M_{d_n}(\mathbb{C}))$ such that
$$
\widetilde{\widetilde{u}}_s(t)=
U_s(t)\begin{bmatrix}
\xi^1_s(t)&&&\\
&\xi_s^2(t)&&\\
&&\ddots&\\
&&&\xi^{d_n}_s(t)
\end{bmatrix}U_s(t)^*, ~~\text{for all}~~ (s,t)\in[0,1]\times[\delta,1-\delta],
$$
where $\xi_s^j(t):[0,1]\times [\delta,1-\delta]\rightarrow S^1$
is continuous for each $1\leq j\leq d_n$. Since $\mathbb{R}$ is a covering 
space of $S^1$, there exists $\psi_j:[0,1]\times [\delta,1-\delta]\rightarrow \mathbb{R}$ such that 
$$
exp(2\pi i\psi_j(s,t))=\xi_s^j(t), ~~\text{for all}~(s,t)\in[0,1]\times [\delta,1-\delta]. 
$$

By 5.1, without loss of generality, we can assume that
$$
exp(2\pi ih_j(s))=exp(2\pi i\psi_j(s,\delta)),~~\text{for all}~1\leq j\leq d_n,
$$
and
$$
exp(2\pi ik_j(s))=exp(2\pi i\psi_j(s,1-\delta)),~~\text{for all}~1\leq j\leq d_n,
$$
We can choose integers $m_j, l_j$ such that
$$
h_j(s)=\psi_j(s,\delta)+m_j,
$$
and 
$$
\psi_j(s,1-\delta)+m_j=k_j(s)+l_j
$$
for each $1\leq j\leq d_n$ and $s\in [0,1]$.

We denote
\begin{equation*}
\lambda_s^{j}(t)=
\begin{cases}
\frac{\delta-t}{\delta}f_j(s)+\frac{t}{\delta}h_j(s),~~~t\in[0,\delta]\\
\psi_j(s,t)+m_j,~~t\in(\delta,1-\delta]\\
\frac{t-1+\delta}{\delta}g_j(s)+\frac{1-t}{\delta}k_j(s)+l_j,~~t\in(1-\delta,1]	
\end{cases}.
\end{equation*}
Then $\lambda_s^j(t)$ is continuous on $[0,1]\times[0,1]$ and
$exp(2\pi i\lambda_s^j(t))\neq exp(2\pi i\lambda^k_s(t))$ if $j\neq k$ and $(s,t)\in[0,1]\times(0,1)$.

\end{proof}

\begin{thm}\label{T:JiangSu}
Let $\mathcal{Z}$ be the Jiang-Su algebra. Then $cel_{CU}(\mathcal{Z})\geq 2\pi$.	 
\end{thm}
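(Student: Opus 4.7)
The plan is, for any $\eps>0$, to exhibit an explicit $v\in CU(\mathcal{Z})$ with $cel(v)\geq 2\pi-\eps$. First, choose $n$ with $p_n,q_n>4/\eps$ and build $v_n\in CU(A_n)$ of the form $v_n(t)=\exp(2\pi i H_n(t))$, where $H_n\in A_n$ is self-adjoint with $\operatorname{tr}(H_n(t))\equiv 0$ (ensuring $\det v_n(t)\equiv 1$, hence $v_n\in CU(A_n)$ by the same determinant argument as in the proof of Theorem~\ref{T:Main}), $\|H_n\|\leq 1-\eps/4$, and satisfying the dimension-drop boundary conditions $H_n(0)\in M_{p_n}\otimes\mathbf{1}_{q_n}$ and $H_n(1)\in\mathbf{1}_{p_n}\otimes M_{q_n}$. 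Following the Pan--Wang blueprint used in Proposition~\ref{P:finite}, $H_n$ is engineered so that its lowest sorted eigenvalue function takes values close to $-(1-\eps/2)$, while the remaining eigenvalues stay small and positive in order to compensate the trace. Set $v:=\phi_{n,\infty}(v_n)\in CU(\mathcal{Z})$.

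To estimate $cel_{\mathcal{Z}}(v)$ from below, I would take any rectifiable path $u_s$ in $\mathcal{Z}$ from $v$ to $\mathbf{1}_{\mathcal{Z}}$ whose length is within $\eps/4$ of $cel_{\mathcal{Z}}(v)$. By density of $\bigcup_m\phi_{m,\infty}(A_m)$ in $\mathcal{Z}$ and a standard perturbation, approximate $u_s$ in sup-norm by a piecewise smooth path lying in $A_m$ for some $m\geq n$, losing at most $\eps/4$ in length. Apply Lemma~\ref{L:eigenlist} in $A_m$ to replace this path by one of the form $v_s(t)=\exp(2\pi i H_s(t))$ with $H_s(t)=\sum_{j=1}^{d_m}\lambda_s^j(t)p_j(s,t)$ and continuous $\lambda_s^j\colon[0,1]^2\to\mathbb{R}$ satisfying $\exp(2\pi i\lambda_s^j(t))\neq\exp(2\pi i\lambda_s^k(t))$ on the interior for $j\neq k$. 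Since $v_1=\mathbf{1}$ and each $\lambda_1^j$ is a continuous integer-valued function on the connected space $[0,1]$, we have $\lambda_1^j\equiv K_j\in\mathbb{Z}$. Proposition~\ref{L:distinct} applied pointwise in $t$, combined with Proposition~\ref{L:conti}, then yields
\[
\mathrm{length}(v_s)\;\geq\;2\pi\max_{j}\min_{K\in\mathbb{Z}}\max_{t\in[0,1]}|\lambda_0^j(t)-K|.
\]

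The hard part will be showing this right-hand side is at least $2\pi(1-\eps/2)$. The obstacle is that the sorted eigenvalue functions of $\phi_{n,m}(H_n)$ become nearly constant in $t$ as $m\to\infty$---their $t$-oscillation is bounded by the modulus of continuity of $H_n$'s eigenvalue functions on intervals of length $2^{-(m-n)}$ by Remark~\ref{R:JSalgebra}---so a direct application of Corollary~\ref{gen-X} would yield only the bound $\pi$. To extract $2\pi-\eps$, I would exploit the dimension-drop boundary structure of $A_m$: applying Lemma~\ref{L:per} at both $t=0$ and $t=1$ identifies integer shifts $m_j,m_j'$ relating the continuous lifts $\lambda_s^j(0),\lambda_s^j(1)$ to the $p_m$- and $q_m$-tuples of representative log-eigenvalues at the endpoints. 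These shifts, together with the trace constraint $\sum_j\lambda_s^j(t)\in\mathbb{Z}$ and the specific design of $H_n$ near its extremal eigenvalue $-(1-\eps/2)$, should force at least one $\lambda_0^j$ to have sup-distance at least $1-\eps/2$ from every integer constant $K$, thereby supplying the missing bound.
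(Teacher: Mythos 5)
Your setup is essentially the paper's: construct an explicit $u\in CU(A_m)$ with a trace-zero logarithm, push it forward, approximate a near-optimal path by one in some $A_n$, apply Lemma~\ref{L:eigenlist} to get continuous lifts $\lambda^j_s(t)$, and use Proposition~\ref{L:distinct}/\ref{L:conti} to reduce to a lower bound on $\max_j\min_{K\in\mathbb Z}\max_t|\lambda^j_0(t)-K|$. You also correctly identify the crux: the $t$-oscillation of each $\lambda^j_0$ shrinks as $n\to\infty$, so the direct pointwise bound caps out at $\pi$.

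The gap is in the proposed resolution. You hope the dimension-drop boundary conditions plus the trace constraint ``force at least one $\lambda^j_0$ to have sup-distance at least $1-\eps/2$ from every integer.'' This cannot hold: the lifts $\lambda^j_s$ are determined by continuity from $\lambda^j_1\equiv K_j\in\mathbb Z$, and since the eigenvalue branches of $\phi_{m,n}(u)$ are nearly $t$-constant, each $\lambda^j_0(t)$ stays in a short interval around some real value, so $\min_K\max_t|\lambda^j_0(t)-K|\leq \tfrac12+o(1)$ for every $j$. No quantity of the form $\max_j\min_K\max_t|\lambda^j_0(t)-K|$ can exceed roughly $\tfrac12$ in the problematic case, so your intended inequality is false and the route dead-ends at $\pi$.

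What the paper actually does in the hard case (the integers $K_j$ take both values $0$ and $1$, with the split at some index $K$) is a path-splitting argument in the $s$-variable, not an oscillation bound on a single $\lambda^j_0$. First one shows, using $(p_n,q_n)=1$ and $p_nq_n=d_n$, that $q_n\nmid K$ and $p_n\nmid(d_n-K)$ cannot both fail. Say $q_n\nmid K$. Because $v_s(0)\in M_{p_n}\otimes\mathbf 1_{q_n}$ for every $s$, the multiplicities of the distinct values among $\{f_j(s,0)\}$ must be $q_n$-divisible as long as the $f_j(s,0)$ do not spread over more than a full period; this forces $f_K(\cdot,0)$ and $f_{K+1}(\cdot,0)$ to coincide on an initial interval $[0,s_0]$ with $s_0>0$, and at the first separation time $s_0$ the colliding value must in fact coincide with an extremal branch ($f_1(s_0,0)=f_K(s_0,0)$ or $f_{d_n}(s_0,0)=f_{K+1}(s_0,0)$). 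Adding the lengths over $[0,s_0]$ and $[s_0,1]$ — tracking $f_1(\cdot,0)$ on the first piece and $f_{K+1}(\cdot,0)$ on the second, or the symmetric pair — yields $\mathrm{length}_s(v_s)\geq 2\pi|f_1(0,0)-1|$ or $2\pi|f_{d_n}(0,0)-0|$, both close to $2\pi$. So the mechanism that rescues the hard case is collision-forcing plus a split of the $s$-interval, and it is this, not a single large-oscillation branch, that you would need to supply to complete the argument.

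One minor remark: in your chain
\[
\mathrm{length}(v_s)\;\geq\;2\pi\max_{j}\min_{K\in\mathbb{Z}}\max_{t\in[0,1]}|\lambda_0^j(t)-K|,
\]
you are implicitly using that the branch $\exp(2\pi i\lambda^j_s(\cdot))$ is a $U(C[0,1])$-path from $\exp(2\pi i\lambda^j_0(\cdot))$ to $1$ of length at most $\mathrm{length}(v_s)$ and then applying Proposition~\ref{L:conti}; that step is fine, but it only controls the left side by the single worst $j$, which — as above — is not enough in the hard case.
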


\begin{proof}
Let $\mathcal{Z}=\lim_{m}A_m$ be the Jiang-Su algebra. For each $m\geq1$, we define
a unitary $u\in A_m$ as follows:
$$
u(t)=\begin{bmatrix}
exp(2\pi i h_1(t))&&\\
&\ddots&\\
&&exp(2\pi i h_{d_m}(t))	
\end{bmatrix}_{d_m\times d_m},
$$
where $h_i(t)=\frac{q_m-1}{q_m}t$ for each $1\leq i\leq p_m$,
$h_i(t)=-\frac{1}{q_m}t$ for each $p_m+1\leq i\leq d_m$.
(Here we identify ${\bf 1}_{p_m}\otimes M_{q_m}\ni {\bf 1}\otimes (a_{ij})_{q_m\times q_m}$ with $(a_{ij}{\bf1}_{p_m})\in M_{p_mq_m}$.)
It follows Lemma 3.7 in \cite{GLN1} that $u\in CU(A_m)$. For any fixed
$n\geq m$, denote $v=\phi_{m,n}(u)$.
Let $u_s(t)$ be a unitary path in $A_n$ with $u_0(t)=v(t)$ and $u_1(t)={\bf 1}_{A_n}$.

For any $0<\eps<\frac{1}{2^{n-m}}$, by Lemma \ref{L:eigenlist} , there exists another piecewise smooth unitary path $v_s(t)$ such that
\begin{enumerate}
	\item [(1)] $\|v_s-u_s\|<\frac{\eps}{2}$;
	\item [(2)] $|length_s (v_s)-length_s (u_s)|<\frac{\eps}{2}$;
	\item [(3)] $$
v_s(t)=U_s(t)
\begin{bmatrix}
exp(2\pi if_1(s,t))&&&\\
&exp(2\pi if_2(s,t))&&\\
&&\ddots&\\
&&&exp(2\pi if_{d_n}(s,t))
\end{bmatrix}U_s(t)^*, ~~\text{for all}~~ (s,t)\in[0,1]\times[0,1],
$$
where $f_j(s,t):[0,1]\times[0,1] \rightarrow \mathbb{R}$ is continuous and $exp(2\pi if_j(s,t))\neq exp(2\pi if_k(s,t))$ if $j\neq k$ and $(s,t)\in[0,1]\times(0,1)$.
\end{enumerate}
In the above construction, we can choose $f_j$ such that
$$
\max_{1\leq j\leq d_n}f_j(s,t)-\min_{1\leq j\leq d_n}f_j(s,t)<1,
~~\text{ for all}~~(s,t)\in[0,1]\times(0,1).
$$
In fact, arbitrarily fix a $t_0\in (0,1)$. Without loss of generality,
we can assume that $f_j(0,t_0)\in[0,1)$ for all $1\leq j\leq d_n$.
This means that
$$
\max_{1\leq j\leq d_n}f_j(0,t_0)-\min_{1\leq j\leq d_n}f_j(0,t_0)<1.
$$
Since $exp(2\pi i f_j(s,t))\neq exp(2\pi i f_k(s,t))$ for $j\neq k$, we have $f_j(s,t)-f_k(s,t)\notin \mathbb{Z}$.
Notice that $f_j(\cdot,\cdot)$ is continuous, we have
$$
\max_{1\leq j\leq d_n}f_j(s,t)-\min_{1\leq j\leq d_n}f_j(s,t)<1
~~\text{for all}~~(s,t)\in[0,1]\times(0,1).\eqno(5.2)
$$

Also, we can assume that
$$
f_1(s,t)\leq f_2(s,t)\leq \cdots\leq f_{d_n}(s,t), ~\text{ for all}
~~(s,t)\in[0,1]\times[0,1].\eqno(5.3)
$$

By the construction of the Jiang-Su algebra, we have
$$
v(t)=b^*
\begin{bmatrix}
exp(2\pi i \mu_1(t))&0&\cdots&0\\
0&exp(2\pi i \mu_2(t))&\cdots&0\\
\vdots&\vdots&&\vdots\\
0&0&\cdots&exp(2\pi i \mu_{d_n}(t))	
\end{bmatrix}b,
$$
where $\mu_1(t)\leq \mu_2(t)\leq \cdots\leq \mu_{d_n}(t)$ for all $t\in[0,1]$,
$b$ is a unitary element in $M_{d_n}(C([0,1]))$. There is a permutation
$\sigma$ such that $exp(2\pi if_j(s,\cdot))$ is a path connecting an element near $e^{2\pi i\mu_{\sigma(j)}(t)}$
with some function near $1$ for each $1\leq j\leq d_n$.
Without loss of generality, we assume that $\sigma(j)=j$ for each $j$.

We have
$$
-\frac{1}{q_m}\leq\mu_j(t)\leq \frac{1}{q_m},~~\text{for all}~~ 1\leq j\leq (d_m-p_m)k^{(m)}k^{(m+1)}\dots k^{(n-1)}+p_mr_1^{(m)}r_1^{(m+1)}\dots r_1^{(n-1)},
$$
and
$$
\mu_j(t)=\frac{(q_m-1)(t+2^{n-m}-1)}{q_m2^{n-m}},~~\text{for all}~~ d_n-p_mr_0^{(m)}r_0^{(m+1)}\dots r_0^{(n-1)}<j\leq d_n.
$$
Obviously, we have $length_s(v_s(\cdot))\geq length_s(exp(2\pi if_j(s,\cdot)))$ for each $j$.
It is easy to see that $f_j(s,\cdot)$ is a path connecting an element near $\mu_j(t)$ with an element near $l_j$,
where $l_j\in \mathbb{Z}$, it follows that
$length_s(f_j(s,\cdot))\geq2\pi\max_{t\in[0,1]}|\mu_j(t)-l_j|-\eps$.
We consider the following cases:

{\bf Case 1:} $l_{j_0}\geq 2$ for some $1\leq j_0\leq d_n$.
Then
$$
length_s(e^{2\pi if_{j_0}(s,\cdot)})\geq 2\pi\max_{t\in[0,1]}|\mu_{j_0}(t)-l_{j_0}|-\eps\geq 2\pi-\eps.
$$

{\bf Case 2:} $l_{j_1}\leq -1$ for some $1\leq j_1\leq d_n$.
Then
$$
length_s(e^{2\pi if_{j_1}(s,\cdot)})\geq 2\pi\max_{t\in[0,1]}|\mu_{j_1}(t)-l_{j_1}|-\eps\geq 2\pi-\eps.
$$

{\bf Case 3:} $l_j=0$ for all $1\leq j\leq d_n$.
Then
$$
length_s(e^{2\pi if_{d_n}(s,\cdot)})\geq 2\pi\max_{t\in[0,1]}|\mu_{d_n}(t)-0|-\eps=2\pi\frac{q_m-1}{q_m}-\eps.
$$

{\bf Case 4:} $l_j=1$ for all $1\leq j\leq d_n$.

$$
length_s(e^{2\pi if_{1}(s,\cdot)})\geq 2\pi\max_{t\in[0,1]}|\mu_{1}(t)-1|-\eps=2\pi\frac{q_m+1}{q_m}-\eps.
$$

{\bf Case 5:} All $l_j$ are either $0$ or $1$,
it follows that $\{j:l_j=0\}\neq \emptyset$ and
$\{j:l_j=1\}\neq \emptyset$.
By (5.3), we can assume that there exists $1\leq K<d_n$
such that $l_j=0$ for all $1\leq j\leq K$ and $l_j=1$ for all $K+1\leq j\leq d_n$.\\

{\bf Claim 1.} If $q_n\nmid K$, then $length_s(v_s)\geq 2\pi
(\frac{(q_m-1)(2^{n-m}-1)}{q_m2^{n-m}}-\eps)$.

We  divide the proof of Claim 1 into several steps.

%
%
%
%
%
%

Step 1. We denote
$$
s_0=\sup\{s\in[0,1]: f_K(s',0)=f_{K+1}(s',0) ~\text{for all}~s'\in[0,s]\}.
$$

First, we shall show that $s_0>0$.
Otherwise, we have $s_0=0$.


Since $\mu_{d_n}(0)-\mu_1(0)=\frac{2^{n-m}-1}{2^{n-m}}<1$,
$\|f_{d_n}(0,\cdot)-\mu_{d_n}(\cdot)\|<\frac{\eps}{2}$ and $\|f_1(0,\cdot)-\mu_1(0,\cdot)\|<\frac{\eps}{2}$, we have
$f_{d_n}(0,0)-f_1(0,0)<1$.
Notice that $f_j$ is continuous,
there exists $\delta_1>0$ such that
$$
f_{d_n}(s,0)-f_1(s,0)<1,~\text{for all} ~0\leq s<\delta_1.\eqno(5.4)
$$
By the definition of $s_0$, there exists $s'\in(0, \delta_1)$ such that $f_K(s',0)<f_{K+1}(s',0)$. It follows that
$$
\{f_j(s',0): 1\leq j\leq K\}\cap
\{f_j(s',0): K+1\leq j\leq d_n\}=\emptyset.
$$
By (5.4), we have
$$
\{exp(2\pi i f_j(s',0)): 1\leq j\leq K\}\cap
\{exp(2\pi i f_j(s',0)): K+1\leq j \leq d_n\}=\emptyset.
$$
Since $q_n\nmid K$, we have $v_{s'}\notin A_n$. This leads a contradiction.
Hence $s_0>0$.

\vspace{0.2in}

Step 2. Before we prove Claim 1,  we shall show the following Claim:\\

{\bf Claim 2.} \hspace{0.5in} $
f_1(s_0,0)=f_K(s_0,0)
~\text{or}~
f_{d_n}(s_0,0)=f_{K+1}(s_0,0).\hspace{1.3in} (5.5)
$

If (5.5) does not hold, we have
 $f_1(s_0,0)<f_K(s_0,0)<f_{d_n}(s_0,0)$.
We denote
$$
\{f_j(s_0,0):1\leq j\leq d_n\}=\{c_k\}_{k=1}^L,
$$
where $c_1<c_2<\cdots<c_L$. Then $L\geq 3$.
We let $H_k=\{1\leq j\leq d_n:f_j(s_0,0)=c_k\}$
for $1\leq k\leq L$. In fact, $H_1=\{j: f_j(s_0,0)=f_1(s_0,0)\}$.
Then $H_{k_1}\cap H_{k_2}=\emptyset$ for $k_1\neq k_2$. Also, there exists $1<k_0<L$, such that $K\in H_{k_0}$. We let $H^{(1)}_{k_0}=\{j\in H_{k_0}:j\leq K\}$ and
$H^{(2)}_{k_0}=\{j\in H_{k_0}:j>K\}$.

Notice that $c_L-c_1\leq1$, we have $|c_{k_0}-c_k|<1$ for each $1\leq k\leq L$
and hence
$$
\{exp(2\pi i f_j(s_0,0)): j\in H_{k_0}^{(1)}\}\cap\{exp(2\pi i f_j(s_0,0)): j\in H_{k_0}^{(2)}\}=\emptyset,\eqno(5.6)
$$
and
$$
\{exp(2\pi i f_j(s_0,0)): j\in H_{k_0}^{(1)}\}\cap\{exp(2\pi i f_j(s_0,0)): j\in H_k\}=\emptyset,~\text{ for all}~k\neq k_0.\eqno(5.7)
$$

Since $v_{s_0}\in A_n$, by (5.6) and (5.7), we have
$$
q_n|(|H_{k_0}^{(1)}|).\eqno(5.8)
$$
Use a similar argument, we can prove that
$$
q_n|(|H_k|), ~\text{for all}~2\leq k\leq k_0-1. \eqno(5.9)
$$

Notice that $\{j: 1\leq j\leq K\}=(\cup_{k=1}^{k_0-1} H_k)\cup H_{k_0}^{(1)}$
and $q_n\nmid K$, we have $q_n\nmid(|H_1|)$.
That is $q_n\nmid (|\{j: f_j(s_0,0)=f_1(s_0,0)\}|)$.

We define
$$
s_1=\inf\{s\in[0,s_0]: q_n\nmid(|\{j:f_j(s,0)=f_1(s,0)\}|)\}.
$$
Notice that $q_n|(|\{j:f_j(0,0)=f_1(0,0)\}|)$ and $f_{d_n}(0,0)-f_1(0,0)<1$,
by a similar argument as the proof of (5.8), we can prove that
$s_1>0$.

We denote
$$
\{f_j(s_1,0):1\leq j\leq d_n\}=\{a_k\}_{k=1}^T,
$$
where $a_1<a_1<\cdots<a_T$. We denote $G_k=\{1\leq j\leq d_n:f_j(s_1,0)=a_k\}$ for each $1\leq k\leq T$.
In fact, $G_1=\{j:f_j(s_1,0)=f_1(s_1,0)\}$.

For any $0<\eps_1<\min\{|a_{k_1}-a_{k_2}|:k_1\neq k_2\}$, there exists $\delta_1>0$
such that
$$
|f_j(s,0)-f_j(s_1,0)|<\frac{\eps_1}{4},\eqno(5.10)
$$
for each $s$ with $0<|s-s_1|<\delta_1$

It follows that $\{f_j(s,0):j\in G_{k_1}\}\cap \{f_j(s,0):j\in G_{k_2}\}=\emptyset$ for all $k_1\neq k_2$ and $0<|s-s_1|<\delta_3$.\\

To finish the proof of Claim 2, we need to consider the following  two cases.\\

Case 1. $q_n\nmid(|\{j:f_j(s_1,0)=f_1(s_1,0)\}|)$. By the definition
of $s_1$, for any $s\in(s_1-\delta_1, s_1)$, we have
$q_n|(|\{j:f_j(s,0)=f_1(s,0)\}|)$.
We denote
$$
\{f_j(s,0):j\in G_1\}=\{r_k\}_{k=1}^W, ~\text{where} ~r_1<r_2\cdots<r_W.
$$
Further, we let $G_1^{(k)}=\{j: f_j(s,0)=r_k\}$, $k=1,2,\cdots,W$.
By (5.10), it is easy to see that $G_1^{(k)}\subset G_1$ for each $1\leq k\leq W$.
In particular,  we have $G_1^{(1)}=\{j: f_j(s,0)=f_1(s,0)\}$. Notice that
$q_n\nmid(|\{j:f_j(s_1,0)=f_1(s_1,0)\}|)$ and
$q_n|(|\{j:f_j(s,0)=f_1(s,0)\}|)$, we have $W\geq2$ and
there exists some $2\leq k_1\leq W$ such that
$q_n\nmid (|G_1^{(k_1)}|)$.

It follows that
$$
\{f_j(s,0):j\in G_1^{(k_1)}\}\cap
\{f_j(s,0):j\in G_k\}=\emptyset, ~\text{for all }~2\leq k\leq T,\eqno(5.11)
$$
and
$$
\{f_j(s,0):j\in G_1^{(k_1)}\}\cap
\{f_j(s,0):j\in G_1^{(k)}\}=\emptyset, ~\text{for all }~1\leq k\leq W ~\text{and}~k\neq k_1.
\eqno(5.12)
$$

Since $k_1\geq 2$, we have $f_j(s,0)>f_1(s,0)$ for all $j\in G_1^{(k_1)}$.
It is easy to check that
 $$
 |f_{j_1}(s,0)-f_{j_2}(s,0)|<1, j_1\in G^{(k_1)}_1, j_2\notin G_1^{(k_1)}.
$$
Combining with (5.11) and (5.12), we have
$$
\{exp(2\pi if_j(s,0)):j\in G_1^{(k_1)}\}\cap
\{exp(2\pi if_j(s,0)):j\in G_k\}=\emptyset, ~\text{for all }~2\leq k\leq T,
$$
and
$$
\{exp(2\pi if_j(s,0)):j\in G_1^{(k_1)}\}\cap
\{exp(2\pi if_j(s,0)):j\in G_1^{(k)}\}=\emptyset, ~\text{for all }~1\leq k\leq W~\text{and}~k\neq k_1.
$$
Combining with $q_n\nmid (|G_1^{(k_1)}|)$, it leads $v_{s}\notin A_n$ for any $s\in (s_1-\delta_1, s_1)$, which is a contradiction.\\

Case 2. $q_n|(|\{j:f_j(s_1,0)=f_1(s_1,0)\}|)$. Then $s_1<s_0$.
By the definition
of $s_1$, there exists $s\in(s_1, s_1+\delta_1)$ such that
$q_n\nmid(|\{j:f_j(s,0)=f_1(s,0)\}|)$.
We denote
$$
\{f_j(s,0):j\in G_1\}=\{z_k\}_{k=1}^M, ~\text{where} ~z_1<z_2\cdots<z_M.
$$
Further, we let $V^{(k)}=\{j: f_j(s,0)=z_k\}$, $k=1,2,\cdots,M$.
Also, by (5.10), we have $V^{(k)}\subset G_1$ for each $1\leq k\leq M$.
In particular, we have $V^{(1)}=\{j: f_j(s,0)=f_1(s,0)\}$. Notice that
$q_n|(|\{j:f_j(s_1,0)=f_1(s_1,0)\}|)$ and
$q_n\nmid(|\{j:f_j(s,0)=f_1(s,0)\}|)$, we have $M\geq2$ and
there exists some $2\leq k_2\leq M$ such that
$q_n\nmid (|V^{(k_2)}|)$.

It follows that
$$
\{f_j(s,0):j\in V^{(k_2)}\}\cap
\{f_j(s,0):j\in G_k\}=\emptyset, ~\text{for all }~2\leq k\leq T,\eqno(5.13)
$$
and
$$
\{f_j(s,0):j\in V^{(k_2)}\}\cap
\{f_j(s,0):j\in V^{(k)}\}=\emptyset, ~\text{for all }~1\leq k\leq W
~\text{and}~k\neq k_2.
\eqno(5.14)
$$

Since $k_2\geq 2$, we have $f_j(s,0)>f_1(s,0)$ for all $j\in V^{(k_2)}$.
It is easy to check that
 $$
 |f_{j_1}(s,0)-f_{j_2}(s,0)|<1, ~\text{for all}~j_1\in V^{(k_2)}_1, j_2\notin V^{(k_2)}.
$$
Combining with (5.13) and (5.14), we have
$$
\{exp(2\pi if_j(s,0)):j\in V^{(k_2)}\}\cap
\{exp(2\pi if_j(s,0)):j\in G_k\}=\emptyset, ~\text{for all }~2\leq k\leq T,
$$
and
$$
\{exp(2\pi if_j(s,0)):j\in V^{(k_2)}\}\cap
\{exp(2\pi if_j(s,0)):j\in V^{(k)}\}=\emptyset, ~\text{for all }~1\leq k\leq W ~ \text{and}~k\neq k_2.
$$
Combining with $q_n\nmid (|V^{(k_2)}|)$, it  leads to $v_{s}\notin A_n$ for some $s\in(s_1,s_1+\delta_1)$, which is a contradiction.

Hence (5.5) holds and Claim 2 is proved.\\

Step 3. We denote $v_s^{(1)}(t)=v_s(t)$ for $s\in[0,s_0]$
and $v_s^{(2)}(t)=v_s(t)$ for $s\in[s_0,1]$. Then we have
$$
length_s(v_s)=length_s(v_s^{(1)})+length_s(v_s^{(2)}).\eqno(5.15)
$$
If $f_1(s_0,0)=f_K(s_0,0)$, by Theorem \ref{C:increasing}, we have
$$
length_s(v_s^{(1)})\geq 2\pi |f_1(0,0)-f_1(s_0,0)|=2\pi|f_1(0,0)-f_K(s_0,0)|,\eqno(5.16)
$$
and
$$
length_s(v_s^{(2)})\geq 2\pi |f_{K+1}(s_0,0)-1|=2\pi|f_K(s_0,0)-1|.
\eqno(5.17)
$$
Combining (5.15), (5.16) and (5.17), we have
$$
length_s(v_s)\geq2\pi |f_1(0,0)-1|\geq2\pi(1+\frac{2^{n-m}-1}{q_m2^{n-m}}-\eps).\eqno(5.18)
$$

If $f_{d_n}(s_0,0)=f_{K+1}(s_0,0)$, also by Theorem \ref{C:increasing}, we have
$$
length_s(v_s^{(1)})\geq 2\pi |f_{d_n}(0,0)-f_{d_n}(s_0,0)|=2\pi|f_{d_n}(0,0)-f_{K+1}(s_0,0)|,\eqno(5.19)
$$
and
$$
length_s(v_s^{(2)})\geq 2\pi |f_{K}(s_0,0)-0|=2\pi|f_{K+1}(s_0,0)-0|.
\eqno(5.20)
$$
Combining (5.15), (5.19) and (5.20), we have
$$
length_s(v_s)\geq2\pi |f_{d_n}(0,0)-0|\geq2\pi(\frac{(q_m-1)(2^{n-m}-1)}{q_m2^{n-m}}-\eps).\eqno(5.21)
$$

Claim 1 follows from (5.18) and (5.21).\\

By a similar argument, we can prove that, if  $p_n\nmid(d_n-K)$, then
\begin{align*}
(*)\hspace{0.8in}length_s(v_s)&\geq \min\{2\pi|f_{d_n}(0,1)-0|,2\pi|f_1(0,1)-1|\}\\
&=\min\{2\pi(\frac{q_m-1}{q_m}-\eps), 2\pi(\frac{q_m+1}{q_m}-\eps)\}\\
&=2\pi(\frac{q_m-1}{q_m}-\eps).
\end{align*}
.

Now we shall show that $q_n|K$ and $p_n|(d_n-K)$ can not
hold together. Otherwise, there are positive integers $l$ and $s$
such that
$$
p_nl+q_ns=d_n.
$$
Noting that
$$
(p_n,q_n)=1,
$$
and
$$
p_n|d_n,
$$
we have $p_n|s$. We denote $s'=\frac{s}{p_n}$. It follows that
$$
l+q_ns'=\frac{d_n}{p_n}=q_n.
$$
Hence $q_n|l$. We denote $l'=\frac{l}{q_n}$. Then
$$
l'+s'=1.
$$
This contradicts to the fact that $l'$ and $s'$ are positive integers.

Hence either $q_n\nmid K$ or $p_n\nmid (d_n-K)$.

As $m$ goes to infinity and $\eps$ goes to zero, applying Claim 1 (for the case $q_n\nmid K$) and $(*)$ (for the case $p_n\nmid (d_n-K)$), we get an element $v:=\phi_{m, \infty} (u)\in CU(\mathcal{Z})$ such  that $cel(v)\geq 2\pi-\eta$, for any pregiven positive number $\eta$. Hence
we have
$$
cel_{CU}(\mathcal{Z})\geq 2\pi.
$$
\end{proof}

Further, we get a more general result.

\begin{thm}\label{T:matrixJiangSu}
Let $\mathcal{Z}$ be the Jiang-Su algebra and $k$ be a positive integer.
Then
$$
cel_{CU}(M_k(\mathcal{Z}))\geq 2\pi.
$$	
\end{thm}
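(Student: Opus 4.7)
The plan is to mimic the proof of Theorem~\ref{T:JiangSu} in the matrix algebra setting. Write $M_k(\mathcal{Z}) = \lim M_k(A_n)$ where $M_k(A_n) = M_k(I[p_n,d_n,q_n])$ is a dimension drop algebra with fibers $M_{kp_n}\otimes\mathbf{1}_{q_n}$ at $t=0$ and $\mathbf{1}_{p_n}\otimes M_{kq_n}$ at $t=1$. Take the unitary $\tilde u := u\otimes\mathbf{1}_k\in M_k(A_m)$, where $u\in CU(A_m)$ is the unitary used in Theorem~\ref{T:JiangSu}. Since $\det\tilde u(t)=(\det u(t))^k=1$ for all $t$, Lemma~3.7 of \cite{GLN1} gives $\tilde u\in CU(M_k(A_m))$; the fiber constraints again force eigenvalue multiplicities of any unitary in $M_k(A_n)$ to be divisible by $q_n$ at $t=0$ and by $p_n$ at $t=1$.

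First I would check that Lemmas \ref{L:per} and \ref{L:eigenlist} extend verbatim to $M_k(A_n)$: their proofs only use the tensor product structure of the endpoint fibers, and pass to $M_k$ with the substitutions $p_n\mapsto kp_n$, $q_n\mapsto kq_n$, $d_n\mapsto kd_n$, since any unitary of the form $A\otimes\mathbf{1}_{q_n}$ (resp.\ $\mathbf{1}_{p_n}\otimes B$) with $A\in U(M_{kp_n})$ (resp.\ $B\in U(M_{kq_n})$) can be perturbed into a normal form with distinct eigenvalues $e^{2\pi i h_j(s)}$. Then, given any piecewise smooth path $u_s$ from $(\phi_{m,n}\otimes\mathrm{id}_k)(\tilde u)$ to $\mathbf{1}$, perturb to obtain continuous eigenvalue phases $f_j(s,t)$ for $j=1,\ldots,kd_n$ with $f_1\leq\cdots\leq f_{kd_n}$ and $\max_j f_j-\min_j f_j<1$. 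At $s=0$ these approximate the phases $\mu_j(t)$ of $\phi_{m,n}(u)\otimes\mathbf{1}_k$ (each $\mu$ of $v$ repeated $k$ times); at $s=1$ each $f_j(1,\cdot)$ is a constant integer $l_j$.

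Now run the case analysis of Theorem~\ref{T:JiangSu} on the integers $l_j$. Cases 1--4 (some $l_j\geq 2$; some $l_j\leq-1$; all $l_j=0$; all $l_j=1$) give the same lower bounds $2\pi-\eps$, $(q_m\pm 1)/q_m\cdot2\pi-\eps$, all of which tend to $2\pi$ as $m\to\infty$. In Case 5, set $K=|\{j:l_j=0\}|$ with $0<K<kd_n$. The crossing argument of Claim~1 still applies whenever $q_n\nmid K$, giving length $\geq 2\pi((q_m-1)(2^{n-m}-1)/(q_m2^{n-m})-\eps)$, and the symmetric argument at $t=1$ gives the lower bound $2\pi((q_m-1)/q_m-\eps)$ whenever $p_n\nmid(kd_n-K)$.

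The main obstacle is the residual subcase where both $q_n\mid K$ and $p_n\mid(kd_n-K)$: since $(p_n,q_n)=1$ and $kd_n=kp_nq_n$, this now admits the non-trivial solutions $K=d_na'$ with $0<a'<k$, which the original coprimality argument ruled out. To handle this, I would refine the crossing argument by considering the partition of $\{1,\ldots,kd_n\}$ into $d_n$ groups of $k$ (each corresponding to one eigenvalue function $\mu_{j'}$ of $v$) and tracking whether $G_{j'}$ has its $k$ members split between $l=0$ and $l=1$. The constraint $K=d_na'$ with $0<a'<k$ combined with the fiber divisibility forces that at some intermediate time $s_0\in(0,1)$ two eigenvalue functions $f_j,f_{j+1}$ from different eventual values of $l$ must coincide at either $t=0$ or $t=1$ with $f_j(s_0,0)$ an extremal eigenvalue (either $f_1$ or $f_{kd_n}$); the same splitting-of-paths computation that gave (5.18) and (5.21) then produces the lower bound $2\pi((q_m-1)(2^{n-m}-1)/(q_m2^{n-m})-\eps)$. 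Letting $n\to\infty$ in $\phi_{m,\infty}(\tilde u)\in CU(M_k(\mathcal{Z}))$ and then $m\to\infty$ gives $cel_{CU}(M_k(\mathcal{Z}))\geq 2\pi$. The delicate point to carry out carefully is the crossing argument in this new subcase, since here the global fiber constraints allow the eigenvalue splitting, so one must exploit the finer group structure coming from the $k$-fold tensor product.
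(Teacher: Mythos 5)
Your proposal uses a genuinely different unitary than the paper's. You take $\tilde u = u\otimes\mathbf 1_k$, whereas the paper sets $u = \mathrm{diag}[u_1,\mathbf 1_{A_m},\ldots,\mathbf 1_{A_m}]$, i.e.\ $u_1$ in the $(1,1)$-corner and identities elsewhere. This is not a cosmetic difference: with the paper's choice, $(k-1)d_n$ of the $kd_n$ eigenvalue phases of $v=(\phi_{m,n}\otimes\mathrm{id}_k)(u)$ are identically $0$, so at $t=1$ the ``high'' eigenvalue cluster (near $\frac{q_m-1}{q_m}$) still has size $d_n^{(1)}=p_mr_0^{(m)}\cdots r_0^{(n-1)}\in(0,d_n)$, and the natural split $K^*=kd_n-d_n^{(1)}$ lies strictly between $(k-1)d_n$ and $kd_n$, hence is never a multiple of $d_n$. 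With $u\otimes\mathbf 1_k$ the high cluster is inflated to $kd_n^{(1)}$, and $K^*=k(d_n-d_n^{(1)})$ can in principle land on a multiple of $d_n$, so the tensor choice loses the extra leverage the corner embedding provides.

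This matters because, as you correctly observe, the coprimality step that closes Case 5 in the proof of Theorem~\ref{T:JiangSu} fails here: solving $p_nl+q_ns=kd_n$ with $(p_n,q_n)=1$ and $p_n,q_n\mid d_n$ yields $l'+s'=k$, which for $k\geq 2$ admits positive solutions, so $q_n\mid K$ and $p_n\mid(kd_n-K)$ \emph{can} coexist exactly when $K\in\{d_n,2d_n,\ldots,(k-1)d_n\}$. Identifying this residual set is the right diagnosis (and the paper's ``use a similar proof'' glosses over it). However, your proposed resolution is only a sketch: the assertion that ``the fiber divisibility forces that at some intermediate time $s_0$ two eigenvalue functions $f_j,f_{j+1}$ from different eventual values of $l$ must coincide at $t=0$ or $t=1$ with $f_j(s_0,0)$ extremal'' is precisely the content of Claim~2, whose proof of the non-divisibility $q_n\nmid|H_1|$ explicitly uses $q_n\nmid K$ and therefore does not apply verbatim when $q_n\mid K$. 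The observation that ``each group of $k$ copies splits $a':(k-a')$'' is consistent with a split but does not by itself yield a crossing of $f_j$ with $f_1$ or $f_{kd_n}$, so no lower bound of the form (5.18) or (5.21) follows. You need either a new divisibility contradiction tailored to the residual $K$, or a direct length estimate showing that choosing $K$ a multiple of $d_n$ forces some $f_j$ to travel from a phase near $\frac{q_m-1}{q_m}$ (resp.\ near $0$) to $0$ (resp.\ $1$), which is exactly what the corner-embedding choice of $u$ makes transparent (since then $K^*\neq d_na'$ for any $a'$, so any residual $K$ necessarily mismatches the clusters at $t=1$). As it stands, the residual subcase is a genuine gap in the argument.
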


\begin{proof}
Let $\mathcal{Z}=\lim_{m}A_m$ be the Jiang-Su algebra.
For each $m\geq1$, we define a unitary $u_1\in A_m$
as  follows:
$$
u_1(t)=
\begin{bmatrix}
e^{2\pi i h_1(t)}&&\\
&\ddots&\\
&&e^{2\pi ih_{d_m}(t)}	
\end{bmatrix}_{d_m\times d_m},
$$	
where $h_i(t)=\frac{q_m-1}{q_m}t$ for each $1\leq i\leq p_m$ and
$h_i(t)=-\frac{1}{q_m}t$ for each $p_m+1\leq i\leq d_m$.
We denote
$$
u=diag[u_1,u_2,\cdots,u_k]\in M_k(A_m),
$$
where $u_i(t)={\bf 1}_{A_m}$ for each $2\leq i\leq k$.
It follows that $u\in CU(M_k(A_m))$.

For any $\eps>0$, use a similar proof of Theorem \ref{T:JiangSu}, we can prove that
$$
cel(u)\geq 2\pi-\eps.
$$

\end{proof}

\bibliographystyle{amsplain}

\end{document}